\newenvironment{proof*}[1]{\medskip\noindent\textbf{#1\ }}{\hspace*{\fill}$\Box$\medskip}
\newtheorem{theorem}{Theorem}[section]
\newtheorem{lemma}[theorem]{Lemma}
\newtheorem{proposition}[theorem]{Proposition}
\newtheorem{corollary}[theorem]{Corollary}
\newtheorem{definition}[theorem]{Definition}
\theoremstyle{remark}
\newtheorem{remark}[theorem]{Remark}
\newtheorem{example}[theorem]{Example}
\newtheorem{problem}[theorem]{Problem}
\newtheorem{conjecture}[theorem]{Conjecture}
\newcommand{\PP}{{\mathbb P}}
\newcommand{\RR}{{\mathbb R}}
\newcommand{\TT}{{\mathbb T}}
\newcommand{\ZZ}{{\mathbb Z}}
\newcommand{\be}[1]{\begin{equation} \label{#1} }
\newcommand{\ee}{\end{equation}}
\newcommand{\beq}{\begin{equation}}
\def \Diff{{\rm Diff}}
\def \hx0{\hat{x_0}}
\begin{document}

\title[Fractal vs Regularity and Rigidity]{Invariant Distributions of Partially Hyperbolic Systems: Fractal Graphs, Excessive Regularity, and Rigidity}
\author{disheng xu}
\address{School of Science, Great Bay University and Great bay institute for advanced study, 
Songshan Lake International Innovation Entrepreneurship Community A5, Dongguan 523000, CHINA}
\email{xudisheng@gbu.edu.cn}

\author{jiesong zhang}
\address{School of Mathematical Sciences, Peking University, No.5 Yiheyuan Road, Haidian District, Beijing 100871, China}
\email{zhjs@stu.pku.edu.cn}

\begin{abstract}
We introduce a novel approach linking fractal geometry to partially hyperbolic dynamics, revealing several new phenomena related to regularity jumps and rigidity. One key result demonstrates a sharp phase transition for partially hyperbolic diffeomorphisms $f$ with a contracting center direction: $f \in \Diff^\infty_{\mathrm{vol}}(\TT^3)$ is $C^\infty$-rigid if and only if both $E^s$ and $E^c$ exhibit Hölder exponents exceeding the expected threshold. Moreover, for $f \in \Diff^2_{\mathrm{vol}}(\TT^3)$, we prove:  
\begin{itemize}
    \item If the Hölder exponent of $E^s$ exceeds the expected value, then $E^s$ is $C^{1}$ and $E^u \oplus E^s$ is jointly integrable.
    \item If the Hölder exponent of $E^c$ exceeds the expected value, then $W^c$ forms a $C^{1}$ foliation.
    \item If $E^s$ (or $E^c$) does not exhibit excessive Hölder regularity, it must have a \textit{fractal graph}.
\end{itemize}  
These and related results originate from a general non-fractal invariance principle: for a skew product $F$ over a partially hyperbolic system $f$, if $F$ expands fibers more weakly than $f$ along $W^u_f$ in the base, then for any $F$-invariant section $\Phi$, if $\Phi$ has no fractal graph, then it is smooth along $W^u_f$ and holonomy-invariant.  

Motivated by these findings, we propose a new conjecture on the \textit{stable fractal or stable smooth} behavior of invariant distributions in typical partially hyperbolic diffeomorphisms.

\end{abstract}

\maketitle
\tableofcontents

\section{Introduction}
\subsection{Partially hyperbolic systems and associated invariant distributions}

Diffeomorphisms exhibiting hyperbolic behavior form a fruitful part of dynamical systems theory. The strongest form of such behavior are known as uniformly hyperbolicity or Anosov systems. 

Partially hyperbolic systems are a natural generalization of Anosov systems, allowing for the inclusion of neutral directions. A diffeomorphism \( f : M \to M \) is called partially hyperbolic if there exists a Riemannian metric and a continuous, \( Df \)-invariant splitting of \( TM \) as 
\[ 
TM = E^s \oplus E^c \oplus E^u, 
\] 
such that, for any \( x \in M \),
\[
\|Df^k|_{E^s(x)}\| < 1 < \|(Df^k|_{E^u(x)})^{-1}\|^{-1}
\]
and
\[
\|Df^k|_{E^s(x)}\| < \|(Df^k|_{E^c(x)})^{-1}\|^{-1}, \quad \|Df^k|_{E^c(x)}\| < \|(Df^k|_{E^u(x)})^{-1}\|^{-1}.
\]
In particular, a partially hyperbolic diffeomorphism is Anosov if \( E^c \) is trivial. This broader class of systems is more flexible, permitting a wider range of dynamical behaviors, and leads to a variety of new phenomena not observed in uniformly hyperbolic systems. Examples of partially hyperbolic systems include time-\( t \) maps of geodesic flows on negatively curved compact Riemannian manifolds, suspensions of Anosov systems, skew products over Anosov systems, and others. Like uniform hyperbolicity, partial hyperbolicity is robust under \( C^1 \)-perturbations.

A notable feature of the invariant distributions in partially hyperbolic systems is their H\"older regularity. An important example is Anosov systems, where the stable and unstable distributions \( E^{s,u} \) and their associated foliations \( W^{s,u} \) are both Hölder continuous. Historically, the ergodicity problem for volume-preserving Anosov systems (or flows) was a significant challenge. Anosov \cite{Ano69} generalized Hopf's argument \cite{hopf} to solve this problem, which relies on the absolute continuity of \( W^{s,u} \).  Meanwhile, the proof of absolute continuity for \( W^{s,u} \) in Anosov systems depends on distortion estimates of certain Jacobians, which are themselves derived from the Hölder continuity of \( E^{s,u} \) .

In this paper, we investigate the properties of invariant distributions in partially hyperbolic systems. 


\subsection{Low regularity of invariant distributions vs bunching property vs rigidity phenomena}
A key property of partially systems is that \( E^s \) and \( E^u \) are uniquely integrable, and their corresponding integral foliations \( W^s \) and \( W^u \) are called the stable and unstable foliations. The study of the regularity of these distributions are closely related to the regularity of the corresponding integral foliations \cite{psw97,hw99}. However, the center distribution \( E^c \) may fail to be integrable (see \cite{bw08} and references therein). Furthermore, \( E^s \oplus E^u \) is generally not integrable, regardless of the regularity of \( E^s \oplus E^u \).

These dynamically defined distributions and foliations usually do not exhibit excessive regularity. In fact, for any $\alpha \in (0,1)$, there exists open set of Anosov diffeomorphisms where the H\"older exponent of their stable and unstable distributions \( E^{s,u} \) and their associated foliations \( W^{s,u} \) are less than $\alpha$ \cite{has95,hw99}. 

Under certain additional \textit{bunching} conditions, the distributions and foliations defined by dynamical systems may exhibit \textit{robust} \( C^1 \) smoothness \cite{hps77,has95,psw97}. Without bunching conditions, additional regularity is rare and usually appears with certain rigidity phenomena. For instance, 
\begin{itemize}
    \item  For a partially hyperbolic system smoothly conjugate to a linear system, the corresponding bundles are smooth (\textit{smooth rigidity}).
    \item  For a \( C^2 \) partially hyperbolic system \( f \) homotopic to an Anosov system on \( \mathbb{T}^3 \), if \( E^s \oplus E^u \) is jointly integrable, i.e., if there exists a foliation \( W^{su} \) tangent to \( E^s \oplus E^u \) everywhere, then \( f \) is an Anosov system  \cite{gs20,hs21}. In this case, both \( E^s \) and \( E^u \) are \( C^1 \) distributions (\textit{topological rigidity}). 
    \item For a volume preserving perturbation of a geodesic flow on closed negatively curved surfaces or volume preserving partially hyperbolic diffeomorphisms with compact centers on $\TT^3$, if $E^c$ is absolutely continuous, then $E^c$ is $C^\infty$ \cite{avw1,avw2} (\textit{measure rigidity}). 
\end{itemize}
It is important to note that these rigidity phenomena are relatively rare within the ``moduli space'' of the corresponding systems and are \textit{fragile} under small perturbations.



All discussions above lead to the following natural problem:

\begin{problem}\label{que: main}
When do invariant distributions of partially hyperbolic systems exhibit only Hölder regularity? What can we say about these systems with distributions of low regularity? 
\end{problem}

In this paper, partially inspired by the study of fractal geometry, we offer a potential answer to Problem \ref{que: main}: \textbf{invariant distributions of general partially hyperbolic systems are only Hölder continuous only when they possess fractal graphs.}

\begin{figure}[ht]
  \centering
  \includegraphics[width=0.7\textwidth]{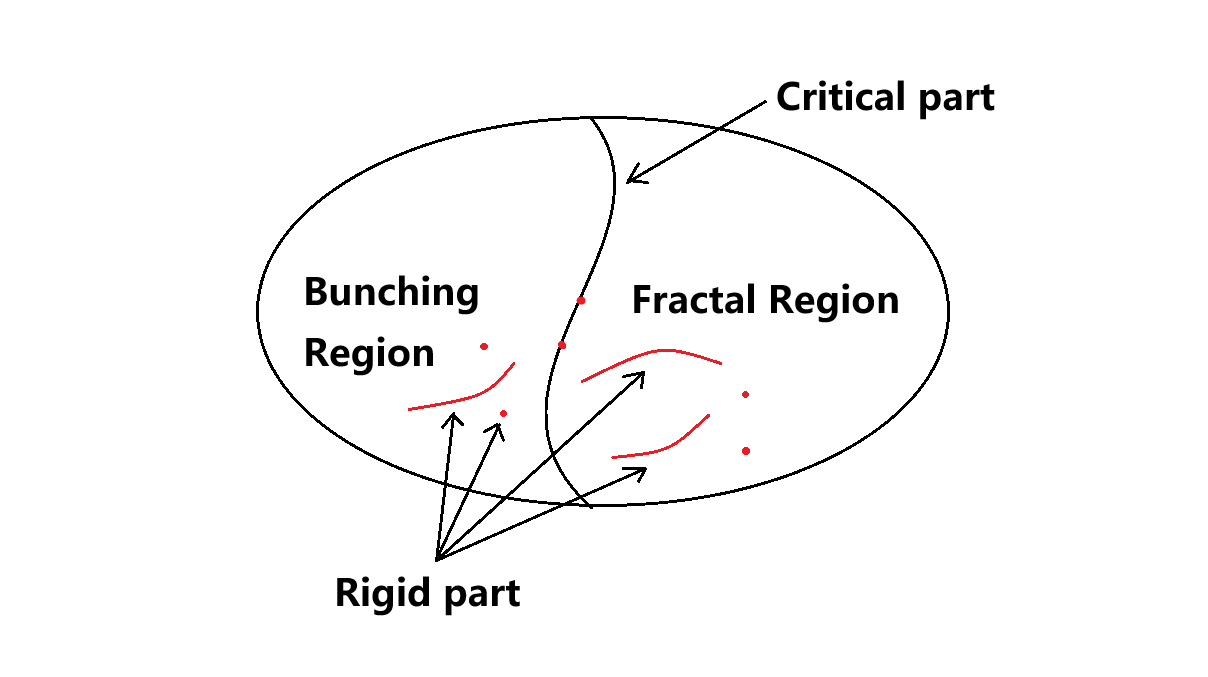}
  \caption{The space of smooth partially hyperbolic systems: the bunching region is open, the fractal region is conjectured to be open, and the critical part and rigid part are conjectured to have no interior.} 
\end{figure}

To present this answer more rigorously, we introduce the definition of \textit{fractal graphs}. 
\begin{remark} It is worth noting that there are examples neither in the bunching region nor the fractal region: the strong stable (unstable) distribution for volume-preserving Anosov flows on three-manifolds is $\alpha$-H\"older for any $\alpha < 1$ but generally fails to be Lipschitz \cite{fh03}. And it is corresponding to the \textit{critical part} in Figure 1.
\end{remark}

\subsection{Fractal geometry and dimension theory of graphs (or invariant sections)}

In fractal geometry, the size of fractal sets is typically characterized by various types of dimensions. For example, the Hausdorff dimension, denoted $\dim_H$, of a set is generally less than or equal to its lower box dimension $\underline{\dim}_B$ and packing dimension $\dim_P$; both of these are also less than or equal to the upper box dimension $\overline{\dim}_B$ of the set (see \cite{fal90}, for instance).

A central topic in fractal geometry is the dimension of the graph of a function. The graphs of Weierstrass-type and related functions are among the most studied objects in this area. Recently, a beautiful dichotomy result was established in \cite{rs21} concerning the dimension of the graph of Weierstrass-type functions
\[
W^\phi_{\lambda,b}(x) = \sum_{n=1}^\infty \lambda^n \phi(b^n(x)), \quad b \in \{2,3,\dots\}, \quad \lambda \in \left(\frac{1}{b}, 1\right), \quad \phi \in C^k(\mathbb{R}/\mathbb{Z}), \quad k \in \{5,\dots, \infty, \omega\},
\]
where either the graph of \( W^\phi_{\lambda,b} \) has the predicted Hausdorff dimension (hence is fractal), or is \( C^k \). In the case where \( \phi \) is a trigonometric function, the box dimension of \( W \) was proven in \cite{kap84}, and the Hausdorff dimension of \( W \) was established in \cite{shen18}, resolving a long-standing conjecture.

A natural generalization of the graph of a function is the section of a fiber bundle. For example, the graph \( \{ x, f(x) \} \subset X \times \mathbb{R} \) of a function \( f: X \to \mathbb{R} \) can be viewed as a section of the trivial bundle \( \pi: X \times \mathbb{R} \to X \). We now introduce the definition of a \textit{fractal graph} for general sections of fiber bundles. Let \( \pi: N \to M \) be a non-trivial \( C^1 \) fiber bundle and \( \Phi: M \to N \) be a continuous section, i.e., \( \pi \circ \Phi = \text{id}_M \). Then, we define the dimension of the graph of \( \Phi \) as
\[
\dim(\mathrm{Graph}(\Phi)) := \dim(\{ \Phi(x) \in N \}_{x \in M}),
\]
where \( \dim \) can refer to any of the dimensions \( \dim_H \), \( \underline{\dim}_B \), \( \dim_P \), or \( \overline{\dim}_B \). Furthermore, if
\[
\dim(\mathrm{Graph}(\Phi)) - \dim(M) > 0 \quad (\text{hence } \overline{\dim}_B(\mathrm{Graph}(\Phi)) - \dim(M) > 0),
\]
then the Hölder exponent of \( \Phi \) satisfies
\[
\text{the Hölder exponent of } \Phi \leq 1 - \frac{\dim(\mathrm{Graph}(\Phi)) - \dim(M)}{\dim N - \dim M}. \footnote{\text{See Section \ref{sec kdim} for a proof.}}
\]
In other words, a function, or more generally a section, cannot exhibit Hölder regularity higher than what is predicted by the dimension of its graph. This motivates the following definition.
\begin{definition}\label{def: frct grph}
We say that a continuous section \( \Phi: M \to N \) of a \( C^1 \) fiber bundle \( \pi: N \to M \) has a \textbf{fractal graph} if 
\[
\overline{\dim}_B (\mathrm{Graph}(\Phi)) > \dim M.
\]
\end{definition}
\subsection{Non-fractal invariance principle for invariant sections}\label{subsec: gene dich}
Some key tools for estimating the regularity of invariant distributions from below are the \( C^r \) section theorem and the H\"older section theorem \cite{hps77,psw10}. Roughly speaking, the strategy involves considering a bundle map $F:N \to N$ over a partially hyperbolic diffeomorphism $f:M \to M$. If the expanding rate of $F$ along the fibers are bounded from below in some fashion, then the \( C^r \) section theorem and the H\"older section theorem give low bound of regularity of the $F$-invariant section from below. Then by associating invariant distributions with invariant sections of certain bundle maps, it leads to regularity estimates for specific invariant sections.

Our main result is somehow opposite to the H\"older section theorem: if the expanding rate of $F$ along the fibers are bounded from above, then the $F$-invariant section has a fractal graph, and thus a upper bound of regularity, unless certain rigidity phenomenon occurs. Specifically, consider a $C^{1}$ partially hyperbolic system \( f \) on a compact manifold \( M \) and a \( C^{1} \) map \( F \) on a fiber bundle \( N \) over \( M \), such that \( F \) projects to \( f \). If \( F \) expands fibers more weakly than \( f \) does along the unstable foliation \( W^u_f \), then \( F \) admits a canonically defined holonomy \( h^u_F \) between fibers. For any $F$ invariant section $\Phi$, we have the following alternative:
\begin{itemize}
\item \textbf{(Fractal graph)}: \textit{\( \Phi \) has a fractal graph}.
\item \textbf{(Invariance principle)}: $\Phi$ is $h^u_F$-invariant and $C^{1}$ along $W^u_f$. 
\end{itemize}
This general result leads to several applications in the theory of partially hyperbolic systems, which we shall explore in the following sections. To the best of the author's knowledge, these results are the first to relate fractal properties to regularity estimate and rigidity in the theory of partially hyperbolic systems.





\subsection{Applications to partially hyperbolic Anosov systems: regularity bootstrap} \label{section 3dim}

To demonstrate the power of our result, we present applications in the study of regularity bootstrap of the invariant distributions, i.e., moderately high regularity implies higher regularity. This phenomenon happens for Anosov splittings under various conditions \cite{hasboot,has02,kh90,ghy93,fk91}. By analyzing the fractal properties of invariant distributions, we discover such bootstrapping phenomenon in partially hyperbolic systems. To simplify the notations, we first focus on \textit{partially hyperbolic Anosov systems} on $\TT^3$, defined as follows (see Section \ref{sec boot high dim} for more general results). 


\begin{definition}
A partially hyperbolic diffeomorphism \( f \) is called \textit{partially hyperbolic Anosov} if \( Df \) uniformly contracts (or expands) the center distribution \( E^c \).
\end{definition}

Partially hyperbolic Anosov diffeomorphisms form an important class of partially hyperbolic systems and our understanding of these systems is still developing. For instance, it was only recently shown by Avila-Crovisier-Eskin-Potrie-Wilkinson-Zhang that the stable foliation \( W^s \) is minimal for any \( C^{1+\alpha} \) partially hyperbolic Anosov diffeomorphism (with a contracting center) on \( \mathbb{T}^3 \). We now fix the following notations.

\textit{Notations:}
\begin{itemize}
    \item For any \( \beta \in \mathbb{R}^+ \), we say a map \( \varphi \) is \( C^{\beta+} \) if it is \( C^{\beta+\epsilon} \) for some \( \epsilon > 0 \), and \( \varphi \) is \( C^{\beta-} \) if it is \( C^{\beta-\epsilon} \) for all \( \epsilon > 0 \). Similarly, if \( \beta \in (0,1) \), we say a map \( \varphi \) is \( \beta+ \)-Hölder continuous if it is \( \beta + \epsilon \)-Hölder continuous for some \( \epsilon > 0 \), and \( \varphi \) is \( \beta- \)-Hölder continuous if it is \( \beta - \epsilon \)-Hölder continuous for all \( \epsilon > 0 \).
    \item In the rest of the paper, when we refer to \( f \) as a partially hyperbolic Anosov diffeomorphism, we assume that \( f \) uniformly contracts \( E^c \) (otherwise, we consider \( f^{-1} \) instead).
\end{itemize}

\subsubsection*{Dichotomy Results for \( E^s \):}

Our first result is a \textit{fractal graph vs. smoothness and rigidity} dichotomy for \( E^s \) in conservative partially hyperbolic Anosov diffeomorphisms on \( \mathbb{T}^3 \). Let
\[
\theta_s = \theta_s(f) := \sup_\theta \left\{\theta : \exists k \text{ such that } |Df^k|_{E^s(x)}| \cdot |Df^k|_{E^u(x)}|^\theta < |Df^k|_{E^c(x)}|, \forall x \in \mathbb{T}^3 \right\}.
\]
It is clear that \( \theta_s \) is closely related to pinching assumptions in the theory of smooth dynamical systems. By standard methods, one can show that \textbf{\( E^s \) is \( \theta_s-\)-Hölder continuous} (see Corollary \ref{coro pinching} for a proof). The following theorem shows that this bound is (almost) sharp.

\begin{theorem} \label{theorem 3dim s}
Suppose \( f \in \Diff^{2}_{\mathrm{vol}}(\mathbb{T}^3) \) is a partially hyperbolic Anosov diffeomorphism. Then:
\begin{enumerate}
    \item If \( E^s \) is \( \theta_s +\)-Hölder continuous, then \( E^s \) is actually \( C^{1} \). Furthermore, \( E^s \oplus E^u \) is jointly integrable.
    \item If (1) does not hold, then \( E^s \) \footnote{As a continuous section of the Grassmannian bundle $G^1\TT^3$ over $\TT^3$.} has a fractal graph.
\end{enumerate}
\end{theorem}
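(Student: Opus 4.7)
The plan is to realize $E^s$ as an $F$-invariant section of a suitable projective bundle over $(\TT^3, f)$ and then apply the non-fractal invariance principle from Section 1.4.

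First, I work with the $Df$-invariant $2$-dimensional subbundle $E^s \oplus E^c \subset T\TT^3$ (the Anosov stable direction) and let $\pi : N := \PP(E^s \oplus E^c) \to \TT^3$ be its projectivization, a circle bundle. The projectivized derivative $F$ is a $C^1$ bundle map over $f$ and fixes $E^s$ as a section. At $E^s(x)$ the fiber expansion rate of $F$ equals $\|Df|_{E^c(x)}\|/\|Df|_{E^s(x)}\|$, and by the definition of $\theta_s$ this rate is asymptotically $\|Df|_{E^u(x)}\|^{\theta_s}$; since $\theta_s < 1$ it is strictly dominated by the base expansion $\|Df|_{E^u}\|$ along $W^u_f$. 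Hence the hypothesis of the non-fractal invariance principle is satisfied, and I obtain the dichotomy: either $E^s$ has a fractal graph in $N$ (equivalently in $G^1\TT^3$, by naturality of dimension under $C^1$ embeddings), or $E^s$ is $C^1$ along $W^u_f$ and invariant under the strong unstable holonomy $h^u_F$.

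In the holonomy-invariant case, I argue that $E^s$ is $C^1$ and $E^s \oplus E^u$ is jointly integrable. The $h^u_F$-invariance of $E^s$ is precisely the preservation of $E^s$ under strong unstable holonomy, which is the standard criterion for joint integrability of $E^s \oplus E^u$; the integrating foliation $W^{su}$ arises as the saturation of $W^s$-leaves by $W^u$-leaves, with consistency across different choices of seed leaf ensured by holonomy invariance. For regularity: $E^s$ is smooth along $W^s$ (as its tangent distribution) and $C^1$ along $W^u_f$ by the invariance principle, so Journ\'e's lemma yields $C^1$ regularity along the $2$-dimensional jointly integrable $W^{su}$. An additional Journ\'e-type application, combined with the $1$-dimensionality of $E^c$ and the presence of the center foliation $W^c$ subfoliating the Anosov stable leaves, upgrades this to $C^1$ regularity on all of $\TT^3$.

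It remains to derive (1) and (2) from this dichotomy. For (2), if $E^s$ is not $\theta_s+$-H\"older it cannot be Lipschitz (since $\theta_s < 1$), hence cannot be $C^1$, so the rigid case is excluded and $E^s$ has a fractal graph. For (1), assume $E^s$ is $\theta_s+$-H\"older; the main obstacle of the proof will be ruling out the fractal case under this hypothesis, since the pointwise H\"older-to-dimension bound alone allows fractal graphs for exponents in $(\theta_s, 1)$. I expect this to be resolved by a dimension-rigidity statement for dynamically defined $F$-invariant graphs---in the spirit of the Weierstrass-type dichotomies discussed in the introduction---asserting that fractal $F$-invariant graphs saturate the section-theorem bound at exactly $\dim M + 1 - \theta_s$, with no intermediate regime. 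With this saturation, $(\theta_s+\epsilon)$-H\"older regularity pushes the graph dimension strictly below this threshold, forcing the graph to be non-fractal and triggering the rigid case, hence the conclusion of (1).
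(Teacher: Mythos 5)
Your bundle realization is essentially the paper's: taking the projectivization $N=\PP(E^s\oplus E^c)$ is exactly the Grassmannian bundle $\mathrm{Gr}(1,E^{cs})$ the paper uses, and the observation that the fiber-expansion rate of the projectivized derivative peaks at the contracting direction $E^s(x)$ with ratio $\|Df|_{E^c}\|/|Df|_{E^s}|$ is correct. However, there are two genuine gaps.

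First and most seriously, for part (1) your argument does not close. You invoke only the qualitative non-fractal invariance principle (Theorem~\ref{them: non fract easy vers}: ``no fractal graph $\Rightarrow$ holonomy-invariant and $C^r$ along $W^u$''), and then you correctly observe that $\theta_s+$-H\"older regularity alone does not exclude a fractal graph with box dimension in $(\dim M,\ \dim M + 1-\theta_s)$. To bridge this, you propose a ``dimension-rigidity'' saturation statement asserting that every $F$-invariant fractal graph must have box dimension \emph{exactly} $\dim M+1-\theta_s$; but no such statement is proved in the paper (or, to our knowledge, known), and the paper does not need it. What the paper actually uses is the \emph{quantitative} version (Theorem~\ref{theorem cocycle pointwise}), in which the dichotomy is keyed directly on the H\"older exponent: if $\Phi$ is $\alpha+$-H\"older along $W^u_f$ then $\Phi$ is holonomy-invariant and $C^r$ along $W^u_f$; otherwise $\underline{\dim}_B\mathrm{Graph}(\Phi)\ge\dim M + 1 - A$. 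With $\alpha=\alpha_s$, the $\theta_s+$-H\"older hypothesis immediately triggers the rigid alternative, provided one knows $\theta_s=\alpha_s$, which is the content of the Liv\v sic-type argument in Proposition~\ref{prop 3dim pinch}. You should replace your speculative saturation step with this quantitative dichotomy and the identification $\theta_s=\alpha_s$.

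Second, your joint-integrability step conflates two distinct holonomies. The map $h^u_F$ is the fibered holonomy associated to the unstable foliation $W^u_F$ of the skew product $F:N\to N$; it is \emph{not} the derivative $Dh^u$ of the base unstable holonomy of $f$, which is the object whose preservation of $E^s$ actually yields joint integrability of $E^s\oplus E^u$. Asserting that $h^u_F$-invariance ``is precisely'' the standard joint-integrability criterion skips the real work: one must show that the two holonomies agree on the invariant section. The paper does this via Lemma~\ref{lemma lip holonomy} (Lipschitz estimate for $Dh^u$ when $E^{cs}$, $E^u$ are $C^1$) and Proposition~\ref{prop su int c1+} (a separate contraction argument showing $h^u_F$-invariance plus $C^1$-along-$W^u$ regularity force $Dh^u$-invariance, and then unique integrability of $E^s$ gives that $h^u$ maps $W^s$-leaves to $W^s$-leaves). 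Finally, your Journ\'e chain ($W^u$, then $W^{su}$, then $W^c$) has a circularity because $W^{su}$ is not available before joint integrability is established; the paper instead combines $C^1$-along-$W^u$ with $C^1$-along-$W^{cs}$ (the latter using that $W^s$ is a $C^2$ subfoliation of $W^{cs}$), which is the cleaner route.
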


\begin{remark}
\begin{enumerate}[(1)]
    \item Theorem \ref{theorem 3dim s} extends to more general cases, including a higher-dimensional generalization in Theorem \ref{theorem s bunching}. It also holds for partially hyperbolic Anosov systems on \( \mathbb{T}^3 \) that are \( C^1 \)-close to being volume-preserving, such as those \textbf{\( C^1 \)-close to a linear system}. A recent result  \cite{alos24} proves measure rigidity for Gibbs measures for this class of systems, assuming \( E^s \oplus E^u \) is not jointly integrable.
    \item Since \( E^s \) is always \( \theta_s-\)-Hölder continuous, our result provides an (almost) sharp bootstrap result for the Hölder regularity of \( E^s \).
    \item We also show that the lower box dimension \( \underline{\dim}_B(E^s) > \dim(\mathbb{T}^3) \), which is a stronger condition than the fractal graph property defined in Definition \ref{def: frct grph}.
\end{enumerate}
\end{remark}

An immediate corollary of Theorem \ref{theorem 3dim s} is the following dichotomy result.

\begin{corollary} \label{coro: local qualitative dich}
For any partially hyperbolic Anosov \( f \in \Diff_{\mathrm{vol}}^{2}(\mathbb{T}^3) \), \( E^s \) is either \( C^{1} \) or has a fractal graph. In particular, for those \( f \in \Diff_{\mathrm{vol}}^{2}(\mathbb{T}^3) \) that are not in a codimension-\( \infty \) subset in $\Diff_{\mathrm{vol}}^{2}(\mathbb{T}^3)$, \( E^s_f \) has a fractal graph.
\end{corollary}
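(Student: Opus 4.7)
The dichotomy in the first sentence is immediate from Theorem~\ref{theorem 3dim s}: as recorded in the remarks preceding that theorem, $E^s$ is always $\theta_s-$-H\"older continuous, so either the stronger $\theta_s+$-H\"older regularity holds---in which case part~(1) upgrades $E^s$ all the way to $C^1$---or it fails, in which case part~(2) gives a fractal graph. Since $\theta_s \in (0,1)$ in the volume-preserving $\TT^3$ setting, any $C^1$ section is automatically $\theta_s+$-H\"older, so the two alternatives exhaust and separate the possibilities.

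For the codimension-$\infty$ claim, the plan is to observe that by part~(1) of Theorem~\ref{theorem 3dim s},
\[
\{\, f \in \Diff^2_{\mathrm{vol}}(\TT^3) : E^s_f \in C^1 \,\} \;\subset\; \mathcal{J} := \{\, f : E^s \oplus E^u \text{ is jointly integrable} \,\},
\]
so it suffices to show that $\mathcal{J}$ has infinite codimension in $\Diff^2_{\mathrm{vol}}(\TT^3)$. I would do this via periodic $su$-cycle obstructions: to each periodic point $p$ of $f$ one attaches an obstruction $\omega_p(f)$ measuring the $E^c$-displacement of a short local $W^u$--$W^s$--$W^u$--$W^s$ quadrilateral based at $p$, and joint integrability of $E^s \oplus E^u$ forces $\omega_p(f) = 0$ simultaneously at every periodic point.

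Given $f \in \mathcal{J}$ and any $N \geq 1$, the next step is to fix an $N$-tuple of periodic points $p_1,\dots,p_N$ with pairwise disjoint orbits (abundant because $f$ is Anosov) and to construct, inside pairwise disjoint small neighborhoods of these orbits, $N$ one-parameter families of conservative $C^2$-small perturbations $(g_{i,t})$ of $f$ such that $\omega_{p_i}$ depends non-trivially on $t$ while $\omega_{p_j}$ for $j \neq i$ is unaffected. Partial hyperbolicity and the Anosov property are $C^1$-open, so the perturbed maps remain in the relevant class. By the implicit function theorem, $\mathcal{J}$ is locally contained in a $C^2$-submanifold of codimension $\geq N$ near $f$; since $N$ was arbitrary, $\mathcal{J}$ has infinite codimension.

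The main technical obstacle will be the explicit construction of these local perturbations: one needs volume-preserving, $C^2$-small modifications supported in a prescribed small box around a single periodic orbit that shift $\omega_p$ in a prescribed direction. This can be arranged by a Moser-type local conservative shear near $p$ combined with a controlled modification of the local unstable holonomy, with mutual independence of the $N$ obstructions following from disjointness of supports.
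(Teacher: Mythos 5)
The first sentence (the dichotomy) is handled correctly and in essentially the same way as the paper: since $E^s$ is always $\theta_s-$-H\"older by Corollary~\ref{coro pinching}, Theorem~\ref{theorem 3dim s} cleanly separates the two alternatives, and a $C^1$ section is trivially $\theta_s+$-H\"older since $\theta_s<1$, so the alternatives are exhaustive and exclusive.

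For the codimension-$\infty$ claim, your route is genuinely different from the paper's and substantially more laborious. You correctly reduce to showing that the joint-integrability locus $\mathcal{J}$ has infinite codimension, using Theorem~\ref{theorem 3dim s}(1). But where you then set out to build the argument from scratch --- defining $su$-quadrilateral obstructions $\omega_p$ at periodic points and engineering $N$ independent conservative perturbations to break them, with the hard analytic work deferred to the end --- the paper simply invokes the result of Gogolev--Shi (Proposition~\ref{prop periodic data}): $E^s\oplus E^u$ is jointly integrable if and only if $f$ has the same $c$-periodic data as its linearization $L$. This reduces $\mathcal{J}$ to the set $\{f : \lambda^c_f(p)=\lambda^c_L(p)\text{ for every periodic }p\}$, which is the prototypical codimension-$\infty$ set in the smooth-rigidity literature, since the $c$-eigenvalue at each of the countably many periodic orbits can be perturbed independently by a localized conservative modification. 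Your strategy is plausible in outline --- joint integrability does force all $su$-cycle obstructions to vanish, so $\mathcal{J}\subset\{\omega_p\equiv 0\}$, and independence of these obstructions under local perturbations is the kind of thing done in the Dolgopyat--Wilkinson genericity-of-accessibility circle of ideas --- but you would still need to make $\omega_p$ precise (your ``quadrilateral displacement'' is not yet a well-defined number: it needs either fixed side-lengths or a second-order/Lie-bracket formulation), show it is differentiable in $f$ to run the implicit function theorem, and carry out the conservative shear construction you sketch. All of that is avoided by citing \cite{gs20}, which is what the paper does.
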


\subsubsection*{Alternative Results for \( E^c \):}

Next, we consider a similar alternative results for the regularity of \( E^c \) along the stable foliation \( W^s \) and the regularity of \( W^c \). Let
\[
\theta_c = \theta_c(f) := \sup_\theta \left\{\theta : \exists k \text{ such that } |Df^k|_{E^s(x)}|^{1-\theta} < |Df^k|_{E^c(x)}|, \forall x \in \mathbb{T}^3 \right\}.
\]
It is not difficult to show that for partially hyperbolic Anosov \( f \) on \( \mathbb{T}^3 \), \( E^c \) is \( \theta_c-\)-Hölder continuous along \( W^s \) (in fact, globally \( \theta_c-\)-Hölder continuous, see Corollary \ref{coro pinching}).

\begin{theorem} \label{theorem 3dim c}
Suppose \( f \in \Diff^{2}(\mathbb{T}^3) \) is a partially hyperbolic Anosov diffeomorphism with minimal \( W^s \). Then:
\begin{enumerate}
    \item If \( E^c \) is uniformly \( \theta_c+\)-Hölder continuous along \( W^s \), then \( E^c \) is uniformly \( C^{1} \) along \( W^s \), and consequently, \( W^c \) is a \( C^{1} \) foliation.
    \item If (1) does not hold, then \( E^c \) has a fractal graph.
\end{enumerate}
\end{theorem}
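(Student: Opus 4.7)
The strategy parallels that of Theorem~\ref{theorem 3dim s} with the roles of $W^s$ and $W^u$ exchanged by passing to $f^{-1}$. We view $E^c$ as a continuous section of the Grassmannian bundle $G^1\TT^3 \to \TT^3$ that is invariant under the bundle map $F$ induced by $Df$; equivalently it is invariant under $F^{-1}$ over $f^{-1}$, and we work with the latter so that $W^u_{f^{-1}} = W^s_f$ becomes the expanding direction in the base.

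The first step is to verify the hypothesis of the non-fractal invariance principle of Section~\ref{subsec: gene dich} for $F^{-1}$ over $f^{-1}$. A direct linearization shows that at the line $E^c(x)$ the unique expanding fiber direction of $F^{-1}$ in $G^1_x\TT^3$ is the $E^s$-summand, with rate $|Df|_{E^c(x)}|\cdot|Df|_{E^s(x)}|^{-1}$, while the base expansion rate of $f^{-1}$ along $W^u_{f^{-1}}$ is $|Df|_{E^s(x)}|^{-1}$. The weak-expansion condition reduces to $|Df|_{E^c}|<1$, which holds automatically, and comparing fiber to the $\theta$-th power of the base rate recovers exactly the bunching inequality defining $\theta_c$.

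The non-fractal invariance principle then yields a dichotomy: either $E^c$ has a fractal graph---with the sharp quantitative estimate in the principle forcing its $W^s$-Hölder exponent to be at most $\theta_c$---or $E^c$ is uniformly $C^1$ along $W^s$ and invariant under the stable holonomy $h^s$ of $F$. Under assumption (1) the $W^s$-Hölder exponent strictly exceeds $\theta_c$, excluding the fractal alternative, so $E^c$ is $C^1$ along $W^s$ and $h^s$-invariant. Conclusion (2) of the theorem corresponds to the opposite case of the same dichotomy, and the principle in fact gives the stronger statement $\underline{\dim}_B(\mathrm{Graph}(E^c))>3$.

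The delicate remaining step---and the main obstacle---is to promote uniform $C^1$ regularity of $E^c$ along $W^s$ (together with $h^s$-invariance) to the global claim that $W^c$ is a $C^1$ foliation of $\TT^3$, i.e.\ that $E^c$ is $C^1$ as a section of $T\TT^3$. The plan is to combine three ingredients: the leafwise $C^1$-regularity of $E^c$ along each $W^s$-leaf, the $h^s$-invariance of $E^c$ (which determines $E^c$ on a whole neighborhood from its restriction to one $W^s$-leaf), and the minimality of $W^s$ (so that a single leaf is dense in $\TT^3$). A Journé-type regularity lemma then glues the leafwise $C^1$-regularity to the transverse data coming from holonomy invariance, yielding global $C^1$-regularity of $E^c$ and hence that $W^c$ is a $C^1$ foliation. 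The difficulty in this final assembly is that a priori the $h^s$-holonomy on $G^1\TT^3$ is only Hölder, so one cannot invoke regularity of the holonomy itself; instead one must exploit holonomy invariance of $E^c$ together with the minimality of $W^s$ to extract transverse $C^1$-regularity from the leafwise $C^1$-regularity.
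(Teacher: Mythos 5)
The first half of your plan matches the paper's: pass to $f^{-1}$, so $W^s_f = W^u_{f^{-1}}$, apply the non-fractal invariance principle (Theorem~\ref{theorem cocycle pointwise}) to the projectivized derivative cocycle, and conclude that when the Hölder exponent of $E^c$ along $W^s$ exceeds the critical value, $E^c$ is $C^1$ along $W^s$ and $h^s_F$-invariant, while otherwise $\underline{\dim}_B\mathrm{Graph}(E^c) > 3$. Two technical points you gloss over: (i) you work in the full Grassmannian $G^1\TT^3$, where the fiber map induced by $Df^{-1}$ is \emph{not} partially hyperbolic in the sense required (the supremum $\|F^k_x\|$ over the entire projective fiber involves the $E^s$-to-$E^u$ expansion ratio, which beats the base rate along $E^s$); the paper instead uses the invariant subbundle $N=\mathrm{Gr}(\dim E^c, E^{cs})=\PP(E^{cs})$, which is where the condition actually holds. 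Your pointwise linearization at $E^c(x)$ computes the correct rate in the $E^s$-direction but ignores the rest of the fiber. (ii) The critical exponent produced by the invariance principle is $\alpha_c$, an asymptotic averaged quantity; identifying it with the pinching supremum $\theta_c$ on $\TT^3$ is Proposition~\ref{prop 3dim pinch} and uses a Liv\v sic-type argument, which your plan does not mention.

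The genuine gap is in the final step. You propose to prove that $E^c$ is globally $C^1$ as a section of $T\TT^3$ and then deduce that $W^c$ is a $C^1$ foliation. This is both stronger than what is needed and, in general, false: away from center-bunching, $E^c$ need not be $C^1$ in the $E^c$-direction itself (the paper even points out, in a footnote, that a $C^1$ foliation can have a distribution with fractal graph). So $C^1$-ness of $E^c$ along $W^s$ and along $W^u$ cannot be Journ\'{e}-glued to global $C^1$ of $E^c$, since the transverse $W^c$-direction is missing. The paper's actual route (Proposition~\ref{prop ec imply wc}) is different and essential: it targets the \emph{center holonomy} $h^c_{xy}:W^s_{\mathrm{loc}}(x)\to W^s_{\mathrm{loc}}(y)$ directly. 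Using that $E^c$ is $C^1$ along $W^s$ and a Gronwall estimate, one shows $h^c$ is uniformly Lipschitz; then by conjugating $h^c_{xy}=f^{-n}\circ h^c_{f^n x,f^n y}\circ f^n$ and invoking bounded distortion of $Df^n|_{E^s}$, one shows the a.e.-defined derivative $Dh^c$ is Hölder on a full-measure set, whence $h^c$ is $C^{1+}$. Combined with the $C^1$ regularity of $W^s$ inside $W^{cs}$ and the $C^1$ center-stable holonomy $h^{cs}$, Journ\'{e}'s lemma then gives that $W^c$ is a $C^1$ foliation—without ever asserting $E^c\in C^1(T\TT^3)$. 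This holonomy-regularity argument is absent from your plan, and the replacement you sketch would not close the gap.
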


\begin{remark}
\begin{enumerate}[(1)]
    \item For the results on \( E^c \), no volume-preserving assumption is needed.
    \item If we assume the result of Avila-Crovisier-Eskin-Potrie-Wilkinson-Zhang holds, we can drop the assumption that \( W^s \) is minimal.
    \item Like for \( E^s \), we obtain an (almost) sharp bootstrap result for the Hölder regularity of \( E^c \).
\end{enumerate}
\end{remark}

Unlike \( W^s \) and \( W^u \), the center foliation \( W^c \) may fail to be absolutely continuous, if it exists. This phenomenon, often referred to as \textit{pathological foliation}, has been observed in several partially hyperbolic systems (see \cite{mil97, sw00, gog12, sx09, ph07}), though it is still not fully understood, especially when \( W^c \) is non-compact. It has been conjectured that for typical partially hyperbolic systems, the center foliation is not absolutely continuous (\cite{hertznote} Problem 12). The following theorem provides a necessary condition for a center foliation to be pathological:

\begin{corollary} \label{path imply fractal}
Let \( f \) be a diffeomorphism as in Theorem \ref{theorem 3dim c}. If \( W^c \) is pathological, then \( E^c \) has a fractal graph.
\end{corollary}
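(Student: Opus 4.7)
The plan is to derive the corollary directly from the dichotomy in Theorem \ref{theorem 3dim c} by a short contrapositive argument. Assume toward contradiction that $E^c$ does not have a fractal graph. Since Theorem \ref{theorem 3dim c} is an exhaustive dichotomy for $f \in \Diff^2(\TT^3)$ partially hyperbolic Anosov with minimal $W^s$, the failure of alternative (2) forces alternative (1). Thus $E^c$ is uniformly $\theta_c+$-Hölder along $W^s$, and consequently $W^c$ is a $C^1$ foliation.

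The remaining step is to observe that a $C^1$ foliation is automatically absolutely continuous: the $W^c$-holonomy between transversals is a $C^1$ map between $C^1$ submanifolds, hence maps zero Lebesgue measure sets on transversals to zero Lebesgue measure sets, and the standard local product structure for $C^1$ foliations then upgrades this to absolute continuity of the foliation with respect to the ambient volume on $\TT^3$. This contradicts the hypothesis that $W^c$ is pathological, i.e.\ that $W^c$ is not absolutely continuous. Hence $E^c$ must have a fractal graph, which is exactly the conclusion.

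The argument is essentially a one-line deduction, so there is no serious obstacle: the only non-trivial input is Theorem \ref{theorem 3dim c}, which is already established, together with the classical fact that $C^1$ foliations are absolutely continuous. One could phrase the proof either as above, or directly: pathological $\Rightarrow$ $W^c$ is not $C^1$ $\Rightarrow$ alternative (1) of Theorem \ref{theorem 3dim c} fails $\Rightarrow$ alternative (2) holds $\Rightarrow$ $E^c$ has a fractal graph. No further estimates, perturbation arguments, or fiber-bundle machinery need be invoked beyond what is already built into Theorem \ref{theorem 3dim c}.
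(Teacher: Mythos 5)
Your proof is correct and is exactly the immediate deduction the paper intends: the corollary is stated without a separate proof precisely because it follows from Theorem \ref{theorem 3dim c} by the contrapositive chain (pathological $\Rightarrow$ $W^c$ not absolutely continuous $\Rightarrow$ $W^c$ not a $C^1$ foliation $\Rightarrow$ the hypothesis of alternative~(1) fails $\Rightarrow$ alternative~(2) gives a fractal graph). The only auxiliary fact you invoke, that a $C^1$ foliation is absolutely continuous, is standard and correctly applied.
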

It would be very interesting to further investigate the relationship between the fractal property of \( E^c \) and the pathological behavior of \( W^c \). In particular, under the framework of Theorem \ref{theorem 3dim c}, one might wonder whether it is possible for \( E^c \) to have a fractal graph while \( W^c \) is absolutely continuous. A priori, this could be the case, since a $C^{1}$ foliation could have a distribution with a fractal graph \footnote{For example, let $w(x):\TT^1 \to \TT^1$ be a continuous function with fractal graph, and consider the foliation $W$ of $\TT^2$ generated by the vector field $E(x,y):=(1,w(x))$. }.

For similar reasons, the following local dichotomy result is not a direct corollary of Theorem \ref{theorem 3dim c}.

\begin{theorem} \label{theorem 3dim c local}
For any \( f \in \Diff^{2}(\mathbb{T}^3) \) that is \( C^1 \)-close to a partially hyperbolic Anosov automorphism on \( \mathbb{T}^3 \) such that \( W^s_f \) is minimal, \( E^c_f \) is either \( C^{1} \) or has a fractal graph.
\end{theorem}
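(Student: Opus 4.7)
The plan is to reduce Theorem \ref{theorem 3dim c local} to Theorem \ref{theorem 3dim c} and then use the $C^1$-closeness to linearity to upgrade the conclusion ``$E^c$ is uniformly $C^1$ along $W^s_f$ and $W^c_f$ is a $C^1$ foliation'' to ``$E^c$ is globally $C^1$ on $\TT^3$''. Since partial hyperbolicity is a $C^1$-open condition, $f$ is itself partially hyperbolic Anosov; together with the hypothesis that $W^s_f$ is minimal and $f \in \Diff^2(\TT^3)$, this places $f$ within the scope of Theorem \ref{theorem 3dim c}. If the fractal alternative holds we are done, so assume the non-fractal alternative.

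Next, I would combine the two partial regularities to obtain $C^1$-regularity of $E^c$ inside each leaf of the weak stable foliation $W^{cs}_f$ (which is a $C^{1+\alpha}$-foliation since it is the stable foliation of $f$ viewed as an Anosov diffeomorphism on $\TT^3$). Indeed, because $W^c_f$ is a $C^1$ foliation with $C^2$ leaves (the leaves are as smooth as $f$, and $f \in \Diff^2$), $E^c = TW^c_f$ is $C^1$ along each $W^c_f$-leaf. Together with the uniform $C^1$-regularity of $E^c$ along $W^s_f$ and the standard fact that continuous partial derivatives in two transverse $C^1$ directions on a surface imply total $C^1$-regularity, one concludes that $E^c$ is $C^1$ along every $W^{cs}_f$-leaf.

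The final and most delicate step is to promote this leafwise regularity to regularity along the transverse $W^u_f$-direction. Here the $C^1$-closeness of $f$ to the linear automorphism $A$ is used: for $A$ every natural bunching estimate holds with room to spare and every invariant distribution is constant, and by openness both of these properties persist for $f$ in a suitably weakened form; in particular the $W^u_f$-holonomies between nearby $W^{cs}_f$-leaves are $C^0$-close to the affine holonomies of $A$. I would set up an auxiliary bundle over $\TT^3$ carrying $E^c$ as an invariant section in such a way that the already-established $W^{cs}$-leafwise $C^1$-regularity can be ``quotiented out'', so that the effective fiber expansion along $W^u_f$ becomes weaker than the base expansion $|Df|_{E^u}|$. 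Applying the non-fractal invariance principle of Section \ref{subsec: gene dich} in this reduced setting yields $W^u$-holonomy invariance of $E^c$ and $C^1$-regularity along $W^u_f$, which combined with the previous paragraph gives $E^c \in C^1(\TT^3)$.

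The main obstacle is precisely this last step: a direct application of the non-fractal invariance principle along $W^u_f$ to the original Grassmannian bundle fails, because the projective fiber expansion $|Df|_{E^u}|/|Df|_{E^c}|$ strictly dominates the base expansion $|Df|_{E^u}|$ (as $|Df|_{E^c}|<1$). Identifying the correct reduction that makes the effective fiber expansion weaker than $|Df|_{E^u}|$ and verifying compatibility with Section \ref{subsec: gene dich} is, in my view, the technical crux of the proof.
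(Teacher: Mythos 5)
Your proposal has two genuine gaps, and both stem from a misreading of where the difficulty actually sits.

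\textbf{The $W^u$-direction is not the obstacle.} You identify ``promoting leafwise regularity to regularity along $W^u_f$'' as the technical crux and correctly observe that a naive application of the non-fractal invariance principle fails because the projective fiber expansion $|Df|_{E^u}|/|Df|_{E^c}|$ exceeds the base expansion $|Df|_{E^u}|$. But this analysis targets the wrong bundle. The paper works on $N = \PP(E^{cs})$ with the bundle map $F$ induced by $Df|_{E^{cs}}$; since $Df$ contracts $E^{cs}$ with the domination $|Df|_{E^s}| < |Df|_{E^c}| < 1$, the section $\Phi^c$ (corresponding to $E^c$) is a \emph{global attractor} of $F$ in the fiber direction. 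The paper's Lemma~\ref{lemma c along u} then shows directly that $\Phi^c|_{W^u_f}$ coincides with an unstable leaf of $W^u_F$ (because iterating forward shrinks the fiber distance to $\gamma_x$), hence $E^c$ is uniformly $C^1$ along $W^u_f$ unconditionally — no invariance principle, no fractal alternative needed. This step is free.

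\textbf{The real obstruction is the $W^{cs}$-direction, and your assertion that $W^c_f$ has $C^2$ leaves is false in general.} You write that ``$W^c_f$ is a $C^1$ foliation with $C^2$ leaves (the leaves are as smooth as $f$, $f \in \Diff^2$),'' concluding that $E^c = TW^c_f$ is $C^1$ along $W^c_f$-leaves. This is not true: by Proposition~\ref{prop anosov t3}, the $W^c$-leaves are $C^2$ \emph{only} under a $2$-domination hypothesis equivalent to $\kappa := \frac{\log \lambda^s_L}{\log \lambda^c_L} > 2$. When $\kappa < 2$ the leaves are merely $C^{1+\alpha}$ and $E^c$ is only H\"older along them, so Journ\'e cannot be invoked to merge the $W^s$- and $W^c$-regularity. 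The paper's actual argument splits on $\kappa$ (noting $\kappa \notin \ZZ$ by transitivity of the Galois action): for $\kappa > 2$ your reasoning does go through; for $\kappa < 2$ the paper instead works with $f^{-1}$, viewing $E^{cs}_f$ as its ``unstable'' direction, puts $\PP(E^{cs})$ over it with the map induced by $Df^{-1}|_{E^{cs}}$, checks that the bunching $\kappa < 2$ makes this bundle map partially hyperbolic over $f^{-1}$, and applies Theorem~\ref{theorem cocycle pointwise} with $W^u_{f^{-1}} = W^{cs}_f$ to get $C^1$-regularity along $W^{cs}_f$ directly. Without this case split your proof does not close.

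A secondary remark: the ``auxiliary quotient bundle'' you propose for the $W^u_f$-step is not constructed in your sketch and is in fact unnecessary, given Lemma~\ref{lemma c along u}; the real place a cleverer bundle choice is needed is the $\kappa < 2$ subcase above.
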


\subsection{Applications to partially hyperbolic Anosov systems: global rigidity}

Recall that for an Anosov diffeomorphism \( f \) on a compact manifold \( M \), we say \( f \) is \( C^r \)-rigid if it is \( C^r \)-conjugate to an automorphism of an infra-nilmanifold. It is well known that \( f \) is always \( C^0 \)-rigid if \( M \) is an infra-nilmanifold \cite{fra69,man74}, but in general, \( f \) is not necessarily \( C^1 \)-rigid. To obtain a \( C^{1+} \)-rigidity result, additional assumptions are required. For example, under assumptions on periodic data \cite{lmmt2,lla92,gg08,gog08,ks09,gks11,dg24} and Lyapunov exponents \cite{sy19,dew21,gks20}, one expects \( f \) to be \( C^{1} \)-rigid, or even \( C^\infty \)-rigid if \( f \) is \( C^\infty \) \cite{gog17,ksw23,ksw24}.

A key assumption closely related to \( C^{1+} \)-rigidity is the regularity of the invariant distributions. The first such result was shown in \cite{av68}, where it was proved that an Anosov diffeomorphism on \( \mathbb{T}^2 \) with \( C^\infty \) stable and unstable distributions is \( C^\infty \)-conjugate to a linear automorphism. Later, \cite{ghy93} extended this result, showing that \( f \) is \( C^\infty \)-rigid if the distributions are \( C^{1+\mathrm{Lip}} \)-regular (see also \cite{kh90,fk91} under the volume-preserving assumption). In higher dimensions, \cite{bl93} proved that any Anosov diffeomorphism with \( C^\infty \) distributions, which preserves an affine connection, is \( C^\infty \)-conjugate to an infra-nilmanifold automorphism.

Our main result follows a similar theme, but with much lower regularity, using Hölder continuity as the threshold for regularity:

\begin{theorem}\label{theorem 3dim rigid}
Let \( f \in \Diff^\infty_{\mathrm{vol}}(\mathbb{T}^3) \) be a partially hyperbolic Anosov diffeomorphism. Then \( f \) is \( C^\infty \)-rigid if and only if the stable distribution \( E^s \) and the center distribution \( E^c \) are \( \theta_s+ \)- and \( \theta_c+ \)-Hölder continuous, respectively.
\end{theorem}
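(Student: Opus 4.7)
The plan is as follows. The necessity direction ($\Rightarrow$) is immediate: a $C^\infty$-conjugacy to a linear Anosov automorphism $A$ of $\TT^3$ pulls back the eigenspaces of $DA$ to $C^\infty$ distributions $E^s$ and $E^c$, which are a fortiori $\theta$-Hölder for every $\theta\in(0,1)$, so they comfortably exceed the thresholds $\theta_s$ and $\theta_c$.

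For the sufficiency direction I would proceed in four steps. First, apply Theorem \ref{theorem 3dim s}(1): since $f$ is volume-preserving and $E^s$ is $\theta_s+$-Hölder, the conclusion yields that $E^s$ is $C^{1}$ and that $E^s\oplus E^u$ integrates to a foliation $W^{su}$. Second, apply Theorem \ref{theorem 3dim c}(1). Its hypotheses require minimality of $W^s$, which is supplied by the cited Avila--Crovisier--Eskin--Potrie--Wilkinson--Zhang result, and uniform $\theta_c+$-Hölder regularity of $E^c$ along $W^s$, which is a fortiori implied by global $\theta_c+$-Hölder regularity. The conclusion then gives that $E^c$ is $C^{1}$ along $W^s$ and that $W^c$ is a $C^{1}$ foliation, so in particular $E^c$ is continuous globally and $C^{1}$ along both $W^s$ and $W^c$.

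Third, I would invoke the topological rigidity theorem of Gan--Shi \cite{gs20} and Hammerlindl--Shi \cite{hs21}: joint integrability of $E^s\oplus E^u$ for a $C^{2}$ partially hyperbolic diffeomorphism on $\TT^3$ homotopic to an Anosov forces $f$ to be topologically conjugate to a linear Anosov automorphism $A$ by a bi-Hölder homeomorphism $h$ that carries each invariant foliation of $f$ to its linear counterpart for $A$. Fourth, I would bootstrap the regularity of $h$ from Hölder to $C^\infty$. The route I would pursue is: use $C^{1}$-regularity of $E^s$ together with the $W^{su}$-foliation to identify the linear affine structure on $W^{su}$-leaves of $A$ with a genuine $f$-invariant affine structure on $W^{su}$-leaves of $f$, and then use $C^{1}$-regularity of $W^c$ together with the volume-preserving assumption to match the center Lyapunov data at periodic points to those of $A$. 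Once the periodic data of $f$ and $A$ coincide, the smooth rigidity results of Kalinin--Sadovskaya--Wang \cite{ksw23,ksw24} (or Gogolev \cite{gog17} in the $C^{1}$-close regime, combined with Journé's lemma applied along each pair of transverse $C^{1}$ foliations) upgrade $h$ to a $C^\infty$ diffeomorphism.

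The main obstacle lies in the fourth step. Passing from merely $C^{1}$ distributions to the full $C^\infty$-rigidity theorem is not automatic: the cited smooth rigidity theorems typically presuppose matching periodic data or higher a priori regularity of the conjugacy. The technical crux of the argument is therefore to leverage the $C^{1}$-regularity of $E^s$ and of $W^c$, together with joint integrability of $E^s\oplus E^u$ and volume preservation, to force the periodic data of $f$ to coincide with those of $A$; once this matching is established, the smooth rigidity machinery closes the argument.
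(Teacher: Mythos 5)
Your high-level structure is correct — you correctly identify that the crux of the sufficiency direction is to leverage $C^{1}$-regularity of $E^s$ (plus joint integrability of $E^s\oplus E^u$) and $C^{1}$-regularity of $W^c$ to \emph{match the periodic data} of $f$ with those of its linearization $L$, after which the Gogolev--Guysinsky / Gogolev smooth rigidity theorems close the argument. But Step 4 is where the proposal has a genuine gap: the ``affine structure on $W^{su}$-leaves'' argument and the claim that $C^{1}$-regularity of $W^c$ plus volume preservation ``matches the center Lyapunov data'' are not worked out, and I do not see how they would yield equal $\lambda^s$, $\lambda^c$, $\lambda^u$ at every periodic orbit. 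The paper's route is much more direct and uses known equivalences recorded in Proposition~\ref{prop periodic data}: joint integrability of $E^s\oplus E^u$ is equivalent to $f$ having the same $c$-periodic data as $L$ \cite{gs20}; Lipschitz regularity of $W^c$ (which your Step 2 gives, as $W^c$ is $C^1$) together with volume preservation is equivalent to having the same $s$-periodic data as $L$ \cite{gog12}; and volume preservation forces $\lambda^u(p)=-\lambda^c(p)-\lambda^s(p)$ at every periodic point, so the $u$-periodic data also match. Then \cite{gg08,gog17} gives $C^\infty$-conjugacy. You need to replace your Step 4 with these cited periodic-data equivalences.

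Two smaller points. First, in Step 2 you invoke the Avila--Crovisier--Eskin--Potrie--Wilkinson--Zhang minimality result to supply the hypothesis of Theorem~\ref{theorem 3dim c}; the paper instead deduces minimality of $W^s$ from the joint integrability of $E^s\oplus E^u$ (established in Step 1) via \cite{rgz17} (see Proposition~\ref{prop anosov t3}), so the theorem does not rely on the unpublished ACEPWZ statement — this is exactly the point of the paper's remark that ACEPWZ would only be needed to drop the minimality hypothesis in Theorem~\ref{theorem 3dim c} itself. Second, your Step 3 invoking the Gan--Shi/Hammerlindl--Shi topological rigidity (joint integrability implies Anosov) is unnecessary here: $f$ is already assumed Anosov; the relevant statement from \cite{gs20} is the $c$-periodic-data equivalence, not the Anosovness conclusion.
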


Recall that \( E^s \) and \( E^c \) for a partially hyperbolic Anosov diffeomorphism \( f \in \Diff^\infty_{\mathrm{vol}}(\mathbb{T}^3) \) are a priori \( \theta_s-\)- and \( \theta_c-\)-Hölder continuous, respectively. Therefore, our rigidity result can also be viewed as an (almost) sharp bootstrap result for the regularity of invariant distributions. 

Deducing rigidity result from Hölder regularity is rather rare. For example, for any $\epsilon>0$ and any partially hyperbolic Anosov automorphism $L$ on $\TT^3$, we can find a volume preserving diffeomorphism $f$ which is $C^1$-close to $L$, such that the conjugacy $h$ given by $h \circ f = h\circ L$ is bi-$(1-\epsilon)$-H\"older but not $C^1$. For more discussion about diffeomorphisms H\"older conjugate to Anosov diffeomorphisms, see \cite{F06,gog2010}. 

\begin{remark}\label{remark rigidity}
The regularity assumptions on \( E^s \) and \( E^c \), the volume-preserving condition, and the three-dimensional condition are all necessary, as the following examples illustrate:

\begin{itemize}
    \item There exists a partially hyperbolic Anosov diffeomorphism \( f \in \Diff^\infty_{\mathrm{vol}}(\mathbb{T}^3) \) such that \( E^s \) is \( C^1 \) but \( f \) is not \( C^1 \)-conjugate to a linear automorphism \( L \) (see Remark 5.2 in \cite{gs20}). Thus, high Hölder regularity of \( E^s \) alone is not sufficient for smooth rigidity.
    
    \item There exists a partially hyperbolic Anosov diffeomorphism \( f \in \Diff^\infty_{\mathrm{vol}}(\mathbb{T}^3) \) such that \( E^c \) is \( C^1 \) but \( f \) is not \( C^1 \)-conjugate to \( L \) (see Section 4.2 in \cite{var16}). Therefore, high Hölder regularity of \( E^c \) alone is also not sufficient for smooth rigidity.
    
    \item There exists an Anosov diffeomorphism \( f: \mathbb{T}^3 \to \mathbb{T}^3 \) such that \( E^u, E^c, E^s \) are all \( C^1 \) but \( f \) is not \( C^1 \)-conjugate to \( L \) (see Example \ref{example c1}). Therefore, the volume-preserving assumption is also necessary for smooth rigidity.
    
    \item For any \( n \neq 3 \), there exists a volume-preserving partially hyperbolic Anosov diffeomorphism \( f \in \Diff^\infty(\mathbb{T}^n) \) with all invariant distributions \( C^1 \), but \( f \) is not \( C^1 \)-conjugate to a linear automorphism \( L \) (see Example \ref{example dim}). 
\end{itemize}
\end{remark}

\subsection{\textit{Stable $C^1$ or stable fractal} conjectures}

From our results and proofs, it appears that the fractal nature of distributions in hyperbolic and partially hyperbolic systems arises from certain transversality properties of dynamically-defined objects, which are expected to be stable under \( C^1 \)-small perturbations. On the other hand, the bunching regime naturally exhibits \( C^1 \)-stability.

These findings, together with the fragility of rigidity phenomena, motivate a series of conjectures concerning the invariant distributions of Anosov and partially hyperbolic systems. For instance, let \( f \) be a \( C^r \) partially hyperbolic diffeomorphism on a compact manifold \( M \). We say that the distribution \( E^s = E^s_f \) is \emph{stably \( C^1 \)} if, for any \( \tilde{f} \in \Diff^r(M) \) that is \( C^1 \)-close to \( f \), the distribution \( E^s_{\tilde{f}} \) remains \( C^1 \). We say \( E^s \) is \emph{stably fractal} if, for any \( \tilde{f} \) that is \( C^1 \)-close to \( f \), the distribution \( E^s_{\tilde{f}} \) exhibits a fractal graph in the sense of Definition \ref{def: frct grph}. Similar stable properties can be defined for \( E^u \) and \( E^c \). 

We then propose the following conjecture for partially hyperbolic systems:

\begin{conjecture}[Stably \( C^1 \) or Stably Fractal Conjecture for Partially Hyperbolic Systems]\label{conj: main PH}
Among \( C^r \), \( r \geq 2 \), partially hyperbolic diffeomorphisms, there exists a \( C^1 \)-open, \( C^r \)-dense set \( \Omega \) such that for each \( f \in \Omega \), the distribution \( E^\ast_f \) for \( \ast \in \{s, u, c\} \) is either stably \( C^1 \) or stably fractal. Moreover, if \( E^u_f \) and \( E^s_f \) are both non-trivial, then \( E^c_f \) is stably fractal.
\end{conjecture}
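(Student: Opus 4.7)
The plan is to build $\Omega$ as the union of two $C^1$-open pieces: a \textbf{stably bunched} region $\Omega_B$, where the classical $C^r$-section theorem forces $E^\ast$ to be $C^1$, and a \textbf{stably fractal} region $\Omega_F$, defined by a quantitative version of the non-fractal invariance principle. The ``critical/rigid'' locus sitting between them will then be shown to have empty $C^r$-interior, so that $\Omega = \Omega_B \cup \Omega_F$ is $C^r$-dense and $C^1$-open as required.

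For $\Omega_B$, I would use the standard Hirsch--Pugh--Shub/Pugh--Shub--Wilkinson setup \cite{hps77,psw97,psw10}: pointwise bunching inequalities such as $|Df|_{E^s(x)}|\cdot |Df|_{E^u(x)}| < |Df|_{E^c(x)}|$ (and their $E^u$, $E^c$ analogues) are $C^1$-open, and on the open set where they hold $E^\ast_f$ is $C^1$ with estimates varying continuously in $f$. Hence $E^\ast$ is stably $C^1$ on $\Omega_B$. For $\Omega_F$, the starting point is the qualitative dichotomies of Theorem \ref{theorem 3dim s}, Theorem \ref{theorem 3dim c}, and Theorem \ref{theorem 3dim c local}, together with the general non-fractal invariance principle of Subsection \ref{subsec: gene dich}: outside the bunching region, either $E^\ast_f$ has a fractal graph in the sense of Definition \ref{def: frct grph}, or $E^\ast_f$ is $C^1$, smooth along the relevant foliation, and holonomy-invariant. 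To promote a single fractal graph into a $C^1$-open condition, I would establish a \emph{quantitative} version: if $\overline{\dim}_B(\mathrm{Graph}(E^\ast_f)) \geq \dim M + \eta$ for some $\eta > 0$, then the same bound holds for every $\tilde f$ in a $C^1$-neighbourhood. This should follow from the $C^0$-stability of $E^\ast$ under $C^1$-perturbations combined with uniform distortion estimates on small boxes (coming from the uniform pinching/spectral gap that persists under $C^1$-small perturbation), giving the required lower semicontinuity of $\overline{\dim}_B$ of the graph.

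For $C^r$-density, the main task is to show that for every $f$ in the critical/rigid locus (i.e., $C^1$ but not bunched, hence holonomy-invariant by the contrapositive of the invariance principle), there are arbitrarily $C^r$-small perturbations landing in $\Omega_B \cup \Omega_F$. The recipe is to destroy holonomy invariance (equivalently, joint integrability of $E^s \oplus E^u$ in the setting of Theorem \ref{theorem 3dim s}, or the corresponding invariance under $W^s$-holonomy in the setting of Theorem \ref{theorem 3dim c}) while remaining inside the partially hyperbolic cone. Perturbations in the spirit of those used in \cite{bw08,gs20,hs21,avw1,avw2} are natural candidates. Once holonomy invariance is broken, the non-fractal invariance principle forces the perturbed $E^\ast$ to be fractal, and the quantitative step above then places the perturbation inside the $C^1$-open set $\Omega_F$. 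For the ``moreover'' clause, the bunching inequalities that would stably keep $E^c$ smooth are simultaneous Lyapunov pinching conditions on both the $E^s$ and $E^u$ sides, which are extremely restrictive when both $E^s$ and $E^u$ are non-trivial; outside this narrow window Theorem \ref{theorem 3dim c} leaves only the fractal or rigid alternatives for $E^c$, and the rigidity alternative (absolute continuity, $C^1$-regularity of $W^c$, or smooth conjugacy to an algebraic model) is expected to be of infinite codimension by results in the spirit of \cite{avw1,avw2,ksw23,ksw24}.

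The hard part will be the $C^r$-density step, i.e., systematically producing perturbations that simultaneously (i) preserve partial hyperbolicity, (ii) break every holonomy invariance simultaneously carried by $E^\ast$, and (iii) yield a \emph{quantitative} fractal gap $\overline{\dim}_B(\mathrm{Graph}(E^\ast)) > \dim M$ rather than just failure of excess Hölder regularity. A secondary obstacle is establishing the $C^1$-openness of $\Omega_F$ in full generality: this requires a robust, quantitative strengthening of the non-fractal invariance principle in which the fractal dimension bound depends only on $C^1$-data, so that it survives $C^1$-small perturbations of $f$. Both steps seem to be within reach using the framework of this paper combined with existing perturbative technology for partially hyperbolic systems, but neither is routine.
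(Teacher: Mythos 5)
The statement you are trying to prove is a \emph{conjecture}. The paper does not prove it; it proposes it as an open problem motivated by the dichotomy results of Sections \ref{section 3dim}--\ref{sec: non-frac tech vers} and by the informal picture of Figure 1. Your ``proof proposal'' is therefore not to be compared against a proof in the paper, because none exists. What you have written is a plausible strategy, and it does line up with the heuristic reasoning the authors sketch in the introduction (bunching region $C^1$-open and stably smooth, fractal region conjecturally $C^1$-open, critical and rigid loci conjecturally with empty interior), but the hard steps you single out at the end are exactly the open problems that make this a conjecture rather than a theorem.

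Two substantive gaps in your outline are worth naming concretely. First, the $C^1$-openness of your set $\Omega_F$ is not a consequence of ``$C^0$-stability of $E^\ast$ under $C^1$-perturbations combined with uniform distortion estimates'': upper box dimension of a graph is not lower semicontinuous under $C^0$ convergence of sections, and a $C^0$ limit of sections with fractal graphs can be smooth. The only $C^1$-robust lower bound for $\underline{\dim}_B \mathrm{Graph}(\Phi)$ available in the paper is $\dim M + 1 - A$ from Theorem \ref{theorem cocycle pointwise}, and invoking it requires first verifying that the relevant bundle morphism is partially hyperbolic and that the obstruction function $\Delta_\delta$ does not vanish; the latter is a rigidity statement that is not itself known to be $C^1$-open in the generality of the conjecture. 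The dichotomies are actually established only for partially hyperbolic Anosov systems (and under explicit bunching hypotheses in Theorems \ref{theorem s bunching} and \ref{theorem c bunching ss}), not for all partially hyperbolic diffeomorphisms: in particular the construction of the fiber bundle $N = \mathrm{Gr}(\dim E^s, E^{cs})$ needs $E^{cs}$ to be $C^1$, which already presupposes a bunching condition. Second, your bunching condition $|Df|_{E^s(x)}|\cdot |Df|_{E^u(x)}| < |Df|_{E^c(x)}|$ is not the correct pointwise condition for $E^s$ to be $C^1$; the relevant bunching inequality in the paper (Proposition \ref{prop ano}) is of the form $\|Df^k|_{E^c(x)}\| / m(Df^k|_{E^s(x)}) < m(Df^k|_{E^u(x)})$, which controls a different ratio and is what enters the proof that $E^{cs}$ is $C^1$ and the holonomy is $C^1$. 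In short: none of this is proved, the density step remains an open perturbation problem, and the openness step would require a genuinely new $C^1$-robust version of Theorem \ref{theorem cocycle pointwise} that the paper does not supply.
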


The ``moreover" part of the conjecture reflects our belief that the \( C^1 \) center distribution is fragile. Similarly, we have the following corresponding conjecture for Anosov systems:

\begin{conjecture}[Stably \( C^1 \) or Stably Fractal Conjecture for Anosov Systems]\label{conj: main Anosov}
Among \( C^r \), \( r \geq 2 \), Anosov diffeomorphisms, there exists a \( C^1 \)-open, \( C^r \)-dense set such that the (un)stable distribution of each element is either stably \( C^1 \) or stably fractal.
\end{conjecture}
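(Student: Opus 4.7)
The plan is to combine the $C^1$-robust bunching theory with the fractal-versus-rigidity dichotomy developed earlier in the paper. First I would identify a bunching stratum $\mathcal{B}\subset\Diff^r(M)$ consisting of those Anosov $f$ for which the hypotheses of the $C^r$-section theorem \cite{hps77,psw97} hold for both $E^s$ and $E^u$. Because the bunching inequalities are open conditions on $Df$ in the $C^1$ topology, $\mathcal{B}$ is $C^1$-open, and on $\mathcal{B}$ both distributions are stably $C^1$. This furnishes the ``stably $C^1$'' half of the desired set directly.

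Outside $\mathcal{B}$, the strategy is to apply the non-fractal invariance principle of Section~\ref{subsec: gene dich} to the lift of $Df$ to the Grassmannian bundle $\pi:GTM\to M$, viewing $x\mapsto E^s(x)$ as an invariant section. Away from bunching, the contraction of this lift on the fibre through $E^s(x)$ is weak compared to the expansion of $f$ along $W^u_f$, so the principle applies and forces the alternative: either $E^s$ has a fractal graph at $f$, or $E^s$ is $C^1$ along $W^u_f$ and invariant under the unstable holonomies. The second alternative is a strong rigidity statement analogous to Theorems~\ref{theorem 3dim s} and~\ref{theorem 3dim c}: combined with the transversality of $E^s$ and $E^u$, it forces rigid coincidences in the periodic data of $f$ (matched Lyapunov exponents, equalities of the derivative cocycle along periodic orbits, joint integrability of $E^s\oplus E^u$), and any such coincidence can be destroyed by arbitrarily $C^r$-small perturbations supported near a single periodic orbit. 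This shows that the rigidity locus $\mathcal{R}$ is $C^r$-nowhere dense.

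The final step is to promote ``fractal at $f$'' to ``fractal on a $C^1$-neighborhood of $f$''. Since the map $\tilde f\mapsto E^s_{\tilde f}$ is continuous from the $C^1$-topology on $\Diff^r(M)$ to the $C^0$-topology on continuous sections of $GTM$, and since the fractal lower bound produced by the invariance principle should arise from a quantitative transversality between stable and unstable geometric objects, one expects the strict inequality $\overline{\dim}_B(\mathrm{Graph}(E^s))>\dim M$ to be lower semicontinuous in $\tilde f$, hence a $C^1$-open condition. The candidate generic set would then be $\Omega:=\mathcal{B}\cup\bigl(\Diff^r(M)\setminus(\overline{\mathcal{R}}\cup\partial\mathcal{B})\bigr)$, and the symmetric statement for $E^u$ would follow by applying the same argument to $f^{-1}$.

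The main obstacle is precisely this last step: proving $C^1$-openness of the fractal property. The fractal dimension estimates for Weierstrass-type functions in \cite{rs21,shen18} rely on non-resonance and transfer-operator arguments whose quantitative constants depend delicately on the base dynamics, and translating them into perturbation-stable fractal lower bounds for $E^s_f$ demands a transversality principle genuinely robust under $C^1$-small changes of $f$. A secondary difficulty is that the bunching boundary $\partial\mathcal{B}$ genuinely supports non-rigid, non-fractal examples, such as the strong stable distributions of volume-preserving Anosov flows on three-manifolds \cite{fh03}; one must argue that this critical stratum can always be escaped by a $C^r$-perturbation in either direction, and I do not see how to accomplish this without new input beyond the methods developed in the main body of the paper.
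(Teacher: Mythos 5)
This statement is Conjecture~\ref{conj: main Anosov}; the paper does not prove it. It is proposed as an open problem, motivated by the dichotomy and rigidity results of Sections~\ref{section 3dim}--\ref{sec: non-frac tech vers} but explicitly left beyond the reach of the methods developed there. So there is no ``paper's own proof'' to compare against, and your proposal should not be read as filling a gap the authors claim to have closed.

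That said, your write-up is a reasonable reconstruction of the evidence behind the conjecture, and you correctly locate the two genuine obstructions. The bunching stratum $\mathcal{B}$ being $C^1$-open and stably $C^1$ is indeed classical (Hirsch--Pugh--Shub, Hasselblatt, Pugh--Shub--Wilkinson) and is what the paper labels the ``bunching region'' in Figure~1. The use of the non-fractal invariance principle (Theorem~\ref{them: non fract easy vers}) to derive a fractal-versus-rigidity alternative away from bunching is also the paper's own heuristic, realized concretely only for partially hyperbolic Anosov systems on $\TT^3$ in Theorems~\ref{theorem 3dim s} and~\ref{theorem 3dim c}. The two places where you honestly stop --- (i) promoting ``$\overline{\dim}_B(\mathrm{Graph}(E^s_f))>\dim M$'' to a $C^1$-open condition on $f$, and (ii) arguing that the critical stratum (e.g.\ the non-Lipschitz, non-fractal strong stable bundles of volume-preserving Anosov flows on $3$-manifolds \cite{fh03}) can always be escaped by a $C^r$-small perturbation --- are precisely why the statement is a conjecture rather than a theorem. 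Box dimension is not lower semicontinuous with respect to $C^0$-convergence of sections in any generality, and there is no quantitative, perturbation-stable transversality mechanism in the paper that would yield a uniform fractal lower bound on a neighborhood of $f$. Nor does the paper's machinery address the stratification of Anosov diffeomorphisms on general manifolds. Your proposal is therefore not a proof, and should not be presented as one; it is a correct diagnosis of why the conjecture is open.
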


We can also pose the same conjectures for volume-preserving (symplectic) partially hyperbolic and Anosov diffeomorphisms.

\subsection{More historical remarks and problems}

Regarding the dimension theory of graphs of functions, in addition to the works mentioned earlier, \cite{diaz19} recently studied the box dimension of function graphs over Anosov systems on surfaces, one-dimensional attractors in the base, and hyperbolic Cantor sets. It provided a formula for the box dimension in terms of appropriate pressure functions. Other related studies include \cite{bed89, kap84, hnw02, hl93}.

A natural question arising from our results is whether the fractal property of invariant distributions can be characterized in terms of Hausdorff dimension, instead of box dimension. This would lead to the problem of determining the Hausdorff dimension of dynamically invariant fractal sets in non-conformal and non-algebraic settings—a challenging problem in general, which can be traced back to the foundational work of Falconer \cite{fal88}. For the Hausdorff dimension of lower-dimensional invariant sets defined by non-conformal dynamics, see \cite{BHR, HR, R} for results in the affine setting and \cite{LPX, JLPX} for the projective case, following the breakthrough \cite{hoc14}. A classical result related to fractal versus rigidity phenomena in dimension theory is Bowen's celebrated theorem on the dimension rigidity of quasicircles \cite{bow79}. For a more recent higher-dimensional non-conformal generalization of Bowen's result, see \cite{LPX, JLPX}.

It would also be intriguing to explore whether our results can be generalized to non-uniform settings, higher-dimensional spaces, or skew-product settings. For example, one might ask whether the center distribution in the famous pathological example of Shub and Wilkinson \cite{sw00} has a fractal graph.

\subsection{Organization of the paper}  
The structure of this paper is as follows: in Sections \ref{sec: non-frac easy} and \ref{sec: non-frac tech vers} we introduce various formulations and applications of the non-fractal invariance principle.  Section \ref{sec: prel} provides the necessary preliminaries for the proofs. In Section \ref{sec graph} we prove Theorem \ref{theorem cocycle pointwise}. In the study of fractal geometry \cite{diaz19,hnw02} or hyperbolic systems \cite{has95,hw99}, it is common to use obstruction functions to obtain upper bounds on the regularity of maps. Here, we extend this approach by introducing a more flexible framework that incorporates local information. This generalization allows us to handle broader classes of systems, including partially hyperbolic systems.  Section \ref{sec: proof remain}  concludes with the proofs of all remaining results.  

\subsection*{Acknowledgements.} 
We thank Artur Avila for his useful comments and encouragement during the early stages of this work. We also thank Amie Wilkinson for useful discussions and for explaining the ideas in \cite{hw99}. We thank Haojie Ren for useful discussions and for pointing out the reference \cite{hl93}. We are grateful to Lorenzo J. D\'iaz and Katrin Gelfert for their encouragement to write this paper. We also thank Jonathan Dewitt, Bassam Fayad, Ziqiang Feng, Shaobo Gan, Ruihao Gu, Yuxiang Jiao, Yi Shi, Yun Yang and Zhiyuan Zhang for helpful discussions. D. X. is supported by National Key R$\&$D Program of China No. 2024YFA1015100, NSFC 12090010 and 12090015.
\section{Notations and the statement of non-fractal invariance principle}\label{sec: non-frac easy}
The goal of this section is to state our non-fractal invariance principle which was mentioned in Section \ref{subsec: gene dich}. For a more technical version of it see Section \ref{sec: non-frac tech ver}.

We first fix some notations. For a linear map $A$ between normed linear spaces we denote by $m(A):=\min_{\|v\|=1}\|A\cdot v\|$. A topological foliation $W$ of a compact manifold $M$ is called minimal if every $W$-leaf is dense in $M$. 

Let $\pi:N\to M$ be a fiber bundle over a compact manifold $M$, a section is defined by a subset of $N$ such that it intersects every fiber at a single point, hence could be (uniquely) identified to a map $\Phi: M\to N$ such that $\pi\circ \Phi=id$. If $\Phi$ is continuous ($C^r$ resp. ) we call $\Phi$ is a continuous ($C^r$ resp. ) section. 

Let \(f \in \Diff^r(M)\) (\(r \geq 1\)) be a partially hyperbolic diffeomorphism with a dominated splitting \(TM = E^s \oplus E^c \oplus E^u\), where \(E^c\) and \(E^s\) could be trivial. Let \(N\) be a $C^r$ fiber bundle over \(M\) with compact fibers. A bundle morphism \(F: N \to N\) over \(f: M \to M\) is called \textit{partially hyperbolic} if \(F\) expands \(N\) along the fibers weaker than \(f\) does along \(W^u_f\). Specifically, there exists \(k \in \mathbb{Z}^+\) such that
\[
1<\|F^k_x\| := \sup_{y \in N_x}\|D F^k_x(y)\| < m(D f^k|_{E^u(x)}), \quad \forall x \in M.
\]
By the classical stable manifold theorem for partially hyperbolic systems (see, for example, \cite{hps77}), for every point in \(N\), there exists a globally defined unstable leaf \(W^u_F\) passing through it, which projects to a \(W^u_f\)-leaf on the base. Moreover, for any two \(N\)-fibers \(N_x\) and \(N_y\) where \(x\) and \(y\) belong to the same local \(W^u_f\)-leaf, the unstable holonomy map of \(W^u_F\),
\[
(h^u_F)_{xy}: N_x \to N_y, \quad (h^u_F)_{xy}(z) := W^u_{F, \mathrm{loc}}(z) \cap N_y,
\]
is well-defined. Then the non-fractal invariance principle could be stated as follows:
\begin{theorem}[Non-fractal invariance principle]\label{them: non fract easy vers}Let $f\in \Diff^r(M),r\geq1$ be a partially hyperbolic diffeomorphism on a compact manifold and $W^u_f$ is minimal. Let $\pi: N\to M$ be a $C^r$ fiber bundle over $M$ and $F:N\to N$ be a partially hyperbolic bundle morphism projects to $f$. Then for any continuous $F$-invariant section $\Phi:M\to N$, if $\Phi$ has no fractal graph, then \(\Phi\) is uniformly \(C^r\) along \(W^u_f\) and invariant under \(h^u_F\).
\end{theorem}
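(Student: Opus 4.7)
The plan is to prove the theorem by contrapositive: if $\Phi$ is not $h^u_F$-invariant, then its graph is forced to be fractal. (Once $h^u_F$-invariance is established, $C^r$-regularity along $W^u_f$ follows immediately because $W^u_F$-leaves are $C^r$-graphs over $W^u_f$-leaves.) The hypothesis $\|F^k_x\|<m(Df^k|_{E^u(x)})$ is precisely the fiber-bunched regime in which the classical unstable manifold theorem yields an $F$-invariant foliation $W^u_F\subset N$ whose leaves are $C^r$-immersed and project locally $C^r$-diffeomorphically onto $W^u_f$-leaves. This produces a continuous holonomy $(h^u_F)_{xy}:N_x\to N_y$ whenever $x,y$ share a local $W^u_f$-leaf, satisfying the $F$-equivariance $F_y\circ (h^u_F)_{xy}=(h^u_F)_{f(x),f(y)}\circ F_x$. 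Combined with $F\circ\Phi=\Phi\circ f$, the \emph{defect}
\[
D(x,y):=\dist_{N_y}\!\bigl(\Phi(y),(h^u_F)_{xy}(\Phi(x))\bigr)
\]
satisfies $D(f(x),f(y))\asymp \|F_x\|\cdot D(x,y)$, and $\Phi$ is $h^u_F$-invariant if and only if $D\equiv 0$.

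Suppose now $D(x_0,y_0)=\delta>0$ for some pair on a common local $W^u_f$-leaf. Backward iteration yields
\[
D(f^{-n}x_0,f^{-n}y_0)\asymp \delta\prod_{k=1}^n\|F_{f^{-k}x_0}\|^{-1},\qquad d^u(f^{-n}x_0,f^{-n}y_0)\asymp d^u(x_0,y_0)\prod_{k=1}^n m(Df|_{E^u(f^{-k}x_0)})^{-1},
\]
so the ``slope'' $D/d^u$ diverges by the factor $\prod_{k=1}^n m(Df|_{E^u(f^{-k}x_0)})/\|F_{f^{-k}x_0}\|$. Using uniformity of the hyperbolicity constants on compact $M$, this slope is at least $r_n^{-\beta}$ with $r_n:=d^u(f^{-n}x_0,f^{-n}y_0)\to 0$ and some fixed $\beta>0$. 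Continuity of $\Phi$ promotes this to a full neighborhood of $(f^{-n}x_0,f^{-n}y_0)$ in the product of $W^u_f$-leaves, minimality of $W^u_f$ lets the dense $W^u_f$-leaf through $f^{-n}x_0$ thread every unstable chart of $M$, and forward iteration of $f$ rebalances scales. Together these tools place, for every $z\in M$ and every $r_n$, a pair of points on $W^u_f(z)$ near $z$ whose slope is $\gtrsim r_n^{-\beta}$. A standard box-counting estimate then shows that covering $\mathrm{Graph}(\Phi)$ over a unit $W^u_f$-chart at scale $r_n$ requires a factor $\gtrsim r_n^{-\beta}$ more than $r_n^{-\dim M}$ prisms, whence $\overline{\dim}_B(\mathrm{Graph}(\Phi))\geq \dim M+\beta>\dim M$, contradicting non-fractality and forcing $D\equiv 0$.

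With $D\equiv 0$, the identity $\Phi(y)=(h^u_F)_{xy}(\Phi(x))$ realizes $\Phi|_{W^u_f(x)}$ as the $C^r$-leaf $W^u_F(\Phi(x))$ parametrized by base points in $W^u_f(x)$; uniform $C^r$-bounds over $x\in M$ follow from compactness of $M$ together with the uniform $C^r$-dependence of unstable leaves on their base points supplied by the invariant manifold theorem.

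The delicate step is the second paragraph: promoting a \emph{single} positive defect into a \emph{global, multi-scale} box-counting excess. This requires a Pliss-type selection to simultaneously control the non-uniform products $\prod\|F\|$ and $\prod m(Df|_{E^u})$ (rather than only their uniformly bunched ratio), combined with careful use of minimality of $W^u_f$ and of forward/backward iteration of $f$ to distribute the excess-slope pairs densely in $M$ at matched scales, so that the accumulated ``bumps'' genuinely overfill $r^{-\dim M}$ prisms instead of merely inflating the count along one isolated backward orbit.
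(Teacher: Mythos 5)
Your overall architecture is the right one — convert a nonzero holonomy defect into diverging ``slopes'' by backward iteration and then read off a box-dimension excess — but the step you correctly flag as delicate is where you diverge from the paper, and the route you sketch there (Pliss selection plus threading a dense $W^u_f$-leaf through all charts and forward-rebalancing scales) is not what is needed; it is more complicated than the situation requires and is not filled in.

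The paper globalizes the defect \emph{before} the backward iteration, not after. It defines $\Delta_\delta(x):=\sup_{t\in W^u_\delta(x)}d(\Phi(t),(h^u_F)_{xt}(\Phi(x)))$ and shows (Lemma \ref{lemma delta}) that since $\Delta_\delta$ is continuous and, by equivariance, $\Delta_\delta\equiv 0$ propagates along $W^u_f$-saturation, minimality forces a dichotomy: either $\Delta_\delta\equiv 0$ on $M$, or $\Delta_\delta$ is \emph{uniformly} bounded below by some $C_0=C_0(\delta)>0$. Once the defect is uniform over $M$, there is no need to transport a single bad pair around: for every base point $x$ and every integer $n$, one simply picks $t_n\in W^u_{\mathrm{loc}}(x)$ with $f^nt_n\in W^u_\delta(f^nx)$ and $d(\Phi(f^nt_n),\gamma_{f^nx}(f^nt_n))\geq C_0/2$, and pulls back. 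The quantity controlled is not the two products $\prod\|F\|$ and $\prod m(Df|_{E^u})$ separately by Pliss times, but the uniform exponent $A_1=\sup_x\frac{\log\|F_x\|}{\log m(Df|_{E^u(x)})}<1$: Lemma \ref{lemma bounded type} gives $\|F^{nk}_{f^{-nk}t}\|\leq\prod\|F^k_{f^{-ik}x}\|^{1+\epsilon}\leq\prod m(Df^k|_{E^u(f^{-ik}x)})^{A_1(1+\epsilon)}$, and the scale $r_n(x)=\delta\prod m(Df^k|_{E^u(f^{-ik}x)})^{-1+\epsilon}$ satisfies $r_n/r_{n+1}<C_1$ uniformly by compactness, so for any target radius $R$ there is an $n_R$ with $r_{n_R}(x)\in[R/C_1,R]$. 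At that scale the oscillation of $\Phi$ on $B(x,R)$ is bounded below by $\sim R^{\theta}$ with $\theta=\frac{1+\epsilon}{1-\epsilon}A_1<1$, uniformly in $x$, and a packing argument (Lemma \ref{lemma dim criterion}) gives $\underline{\dim}_B\mathrm{Graph}(\Phi)\geq\dim M+1-\theta>\dim M$ — which is stronger than the $\overline{\dim}_B$ bound you aim for.

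So: replace your second paragraph's globalization scheme with the observation that minimality already forces the defect function to be uniformly large once it is not identically zero; then the backward iteration at each base point, with scale-matching via $n_R$, directly yields a uniform oscillation lower bound at every point and every small scale. No Pliss-type selection, no threading of dense leaves, and no forward rebalancing are needed. Your first and third paragraphs (unstable leaf theorem for $F$, equivariance of the defect, and deducing $C^r$-regularity along $W^u_f$ from holonomy invariance) match the paper.
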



\section{Variants of non-fractal invariance principle} \label{sec: non-frac tech vers}

\subsection{Quantitative version of non-fractal invariance principle }\label{sec: non-frac tech ver}
Let $F:N\to N, f:M\to M$ be as in Theorem \ref{them: non fract easy vers}. To state the result more precisely, for \(k \in \mathbb{Z}^+\) we define
\[
A_k = \sup_{x \in M} \frac{\log \|F^k_x\|}{\log m(D f^k|_{E^u(x)})}, \quad A := \inf_{k \in \mathbb{Z}^+} A_k,
\]
and
\[
\alpha(k, x) := \liminf_{n \to +\infty} \frac{\sum_{i=1}^n \log \|F^k_{f^{-ik}x}\|}{\sum_{i=1}^n \log m(D f^k|_{E^u(f^{-ik}x)})}, \quad \alpha := \inf_{x \in M, k \in \mathbb{Z}^+} \alpha(k, x).
\]
It follows from the definitions that \(\alpha(k, x) \leq A_k\) and \(0<\alpha \leq A < 1\).

\begin{theorem}\label{theorem cocycle pointwise}
Suppose that \(F: N \to N\) is partially hyperbolic and \(W^u_f\) is minimal. Then for any continuous \(F\)-invariant section \(\Phi: M \to N\),
\begin{enumerate}
    \item If \(\Phi\) is uniformly \(\alpha+\)-H\"{o}lder continuous along \(W^u_f\), then \(\Phi\) is invariant under \(h^u_F\) and is uniformly \(C^r\) along \(W^u_f\).
    \item If (1) does not hold, then \(\underline{\dim}_B \mathrm{Graph}(\Phi) \geq \dim M + 1 - A > \dim M\).
\end{enumerate}
\end{theorem}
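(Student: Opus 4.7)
The plan is to prove both parts in tandem via an \emph{obstruction cocycle}: for $x,y$ on the same local $W^u_f$-leaf, set
\[
\delta(x,y) := d_{N_y}\!\bigl(\Phi(y),\,(h^u_F)_{xy}(\Phi(x))\bigr)
\]
using a local trivialization of $N$. The $F$-invariance of $\Phi$ together with the $F$-equivariance of the unstable holonomy (a consequence of $F(W^u_F) = W^u_F$) yields the recursion
\[
\delta(x,y) \leq \|F^n_{f^{-n}y}\|\cdot \delta(f^{-n}x,f^{-n}y),
\]
while unstable expansion of $f$ gives $d(f^{-n}x,f^{-n}y) \leq d(x,y)/m(Df^n|_{E^u(f^{-n}y)})$. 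These two inequalities are the common engine for both parts.

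For part (1), assume $\Phi$ is uniformly $\beta$-H\"older along $W^u_f$ with $\beta > \alpha$. Combining the recursion with this H\"older bound (and absorbing the $O(d(f^{-n}x,f^{-n}y))$ Lipschitz error coming from the local holonomy), I obtain
\[
\delta(x,y) \leq C\,d(x,y)^\beta \cdot \|F^n_{f^{-n}y}\|\cdot m(Df^n|_{E^u(f^{-n}y)})^{-\beta}.
\]
The last two factors equal $\exp\!\bigl(\log m(Df^n|_{E^u})\cdot (R_n(y) - \beta)\bigr)$, where $R_n(y)$ is precisely the ratio defining $\alpha(k,y)$ for the relevant stepsize. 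Since $\alpha = \inf_{k,x}\alpha(k,x) < \beta$, I choose $(k^*,y^*)$ with $\alpha(k^*,y^*) < \beta$; along the Birkhoff subsequence of stepsize $k^*$ on which the liminf is nearly attained, the bracket stays strictly negative, forcing $\delta(x,y^*) = 0$ for every $x \in W^u_f(y^*)$. Minimality of $W^u_f$ makes $W^u_f(y^*)$ dense in $M$, and continuity of $\delta$ on pair-space extends this to $\delta \equiv 0$ globally, i.e.\ $\Phi$ is $h^u_F$-invariant. The upgrade to uniform $C^r$-regularity along $W^u_f$ is then automatic: under partial hyperbolicity the foliation $W^u_F$ has $C^r$ leaves, so the fiber-to-fiber holonomies $(h^u_F)_{xy}$ are $C^r$, and an $h^u_F$-invariant section is leafwise a translate of its value at one reference fiber point, inheriting uniform $C^r$-smoothness by compactness.

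For part (2), I assume the premise of (1) fails. The contrapositive of (1) produces a pair $(x_0,y_0)$ with $\delta(x_0,y_0) = \eta_0 > 0$, and by continuity an open neighborhood $U$ in pair-space on which $\delta \geq \eta_0/2$. Fix $\epsilon > 0$, pick $k^*$ with $A_{k^*} < A + \epsilon$, and use the subaveraging $A_{mk^*} \leq A_{k^*}$, equivalent to the uniform bound $\|F^{mk^*}_x\| \leq m(Df^{mk^*}|_{E^u(x)})^{A+\epsilon}$. Diagonal backward iteration of $U$ by $(f^{-mk^*},f^{-mk^*})$ produces, at scale $r_m \asymp m(Df^{mk^*}|_{E^u})^{-1}$, pairs with $\delta \geq (\eta_0/2)\,\|F^{mk^*}\|^{-1} \geq c\,r_m^{A+\epsilon}$, which translates into $d(\Phi(p),\Phi(q)) \geq c'\,r_m^{A+\epsilon}$ (the holonomy-Lipschitz error $O(r_m)$ is negligible because $A+\epsilon < 1$). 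Using minimality of $W^u_f$, the $W^u_f$-saturation of the iterated neighborhoods is dense in $M$, and compactness together with the equicontinuity of the unstable holonomies of $f$ allows me to cover $M$ by unstable plaques of length $r_m$ a positive fraction of which carry a pair with $\Phi$-oscillation $\gtrsim r_m^{A+\epsilon}$. A direct box-count then gives $N(r_m) \gtrsim r_m^{-(\dim M + 1 - A - \epsilon)}$, and hence $\underline{\dim}_B\mathrm{Graph}(\Phi) \geq \dim M + 1 - A - \epsilon$; letting $\epsilon \to 0$ concludes.

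The main obstacle will be the spreading step in part (2): guaranteeing a genuinely positive $\dim M$-density of $r_m$-cubes in $M$ that contain a witness pair, despite the absence of any volume-preservation hypothesis on $f$. I plan to handle this by an unstable-Fubini argument, decomposing the iterated open sets transverse to $W^u_f$ and transporting the local obstruction into every transversal plaque using minimality together with the equicontinuous unstable holonomies of the base dynamics, in the spirit of the flexible obstruction-function framework alluded to in the paper's organization.
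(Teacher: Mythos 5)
Your part (1) is essentially the paper's argument (Lemmas 5.1--5.3): the obstruction cocycle, the backward-iterate recursion that bounds it by the ratio of the fiber expansion of $F$ to the base expansion of $f$ along $W^u_f$, the absorption of the holonomy Lipschitz error, the vanishing along a nearly-extremal Birkhoff subsequence at one backward orbit, and the globalization by minimality and continuity. The paper packages that globalization step into a clean dichotomy for the supremum-on-disk obstruction $\Delta_\delta(x) := \sup_{t \in W^u_\delta(x)} d\bigl(\Phi(t), (h^u_F)_{xt}\Phi(x)\bigr)$: its zero set is saturated under forward $f$-iteration of unstable plaques, hence by minimality and continuity either $\Delta_\delta \equiv 0$ on $M$, or $\Delta_\delta$ is bounded away from zero \emph{uniformly on $M$}.

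That dichotomy is precisely the missing ingredient in your part (2), and its absence is a genuine gap --- indeed, exactly the step you flag as ``the main obstacle.'' You extract a single witness pair $(x_0,y_0)$ with $\delta(x_0,y_0)=\eta_0>0$, pull back a small pair-neighborhood $U$, and then need $r_m$-cubes carrying a witness with positive $\dim M$-density across all of $M$; the proposed ``unstable-Fubini / minimality saturation'' spreading is never made precise and, as sketched, would require propagating the lower bound along unstable saturation, which is circular (you would be assuming the obstruction does not die along $W^u_f$-leaves, which is what needs to be proved). But no spreading is needed: the same minimality-plus-continuity reasoning you already use in part (1), applied to the function $\Delta_\delta$ on $M$ rather than to a single pair, yields $\Delta_\delta(x) \geq C_0 > 0$ for \emph{every} $x \in M$ once the obstruction is nonzero anywhere. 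With that uniform lower bound the rest of your part (2) is correct and is the paper's argument (Lemma 5.5 and Proposition 5.6): for each $x$ there is a witness at scale $\delta$ based at $f^n x$; pulling it back by $f^{-n}$ gives an oscillation lower bound of order $R^{A+\epsilon}$ at every $x$ and every small scale $R$; a direct packing count over an $r$-separated set in $M$ then gives $\underline{\dim}_B\,\mathrm{Graph}(\Phi) \geq \dim M + 1 - A - \epsilon$, with no transversal covering step at all. In short: promote the obstruction from a pair to the continuous function $\Delta_\delta$ on $M$, note that its zero set is closed and (forward-)$W^u_f$-saturated, and let minimality force the either/or.
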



\subsection{Applications: distributions of partially hyperbolic Anosov systems}\label{sec boot high dim}
Let $f$ be a $C^{2}$ partially hyperbolic Anosov diffeomorphism on a compact manifold $M$. Define  
\[
\alpha_s(k, x) := \liminf_{n \to +\infty}  
\frac{\sum_{i=1}^n \left( \log\|Df^k|_{E^{c}(f^{-ik}x)}\| - \log m(Df^k|_{E^{s}(f^{-ik}x)}) \right)}{\sum_{i=1}^n \log m(Df^k|_{E^u(f^{-ik}x)})},
\]
and  
\[
\alpha_s := \inf_{x \in M, k \in \mathbb{Z}^+} \alpha_s(k, x).
\]
For the stable bundle $E^s$, we have:
\begin{theorem}\label{theorem s bunching}
Suppose $f$ is topologically transitive. Assuming there exists a constant $k \in \mathbb{Z}^+$ such that  
\[
\frac{\|Df^k|_{E^{c}(x)}\|}{m(Df^k|_{E^{s}(x)})} < m(Df^k|_{E^u(x)}) \quad \text{and} \quad 
\frac{\|Df^k|_{E^{u}(x)}\|}{m(Df^k|_{E^{u}(x)})} < \|Df^{k}|_{E^c(x)}\|^{-1},
\]
then $\alpha_s \in (0,1)$, and the following holds:  
\begin{enumerate}
    \item If $E^s$ is $\alpha_s+$-H\"older continuous, then $E^s$ is $C^{1}$, and $E^s \oplus E^u$ is jointly integrable.  
    \item If (1) does not hold, then $\underline{\dim}_{B} \mathrm{Graph}(E^s) > \dim M$.  
\end{enumerate}
\end{theorem}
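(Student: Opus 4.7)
The plan is to apply the non-fractal invariance principle Theorem~\ref{theorem cocycle pointwise} to $\Phi=E^s$, realized as the invariant section of a suitably chosen fiber bundle over $M$, with the exponent $\alpha$ of that theorem coinciding with $\alpha_s$ as defined here. First, for positivity: since $f$ is partially hyperbolic Anosov we have $m(Df^k|_{E^u})>1$, so the denominator in $\alpha_s(k,x)$ is strictly positive, and domination gives $\|Df^k|_{E^c}\|/m(Df^k|_{E^s})\geq m(Df^k|_{E^c})/m(Df^k|_{E^s})>1$, so the numerator is strictly positive; the first bunching hypothesis makes the resulting ratio $<1$ uniformly, hence $\alpha_s\in(0,1)$.

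For the bundle, set $k=\dim E^s$, $E^{cs}:=E^s\oplus E^c$ and $N:=\mathrm{Gr}_k(E^{cs})$, equipped with $F:=Df_*$ and the $F$-invariant section $\Phi(x):=E^s(x)$. Near $E^s(x)$, a $k$-plane in $E^{cs}(x)$ is the graph of a linear map $A:E^s(x)\to E^c(x)$, and $F$ acts on such graphs by $A\mapsto Df|_{E^c}\circ A\circ(Df|_{E^s})^{-1}$, whose operator norm is $\|Df|_{E^c}\|/m(Df|_{E^s})$. The first bunching hypothesis is precisely the statement that some iterate of this quantity is pointwise strictly below $m(Df^k|_{E^u})$, so $F$ is partially hyperbolic in the sense required by Theorem~\ref{theorem cocycle pointwise}, and a direct computation of the asymptotic exponent identifies the $\alpha(k,x)$ of that theorem with the present $\alpha_s(k,x)$. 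Minimality of $W^u_f$ follows from topological transitivity by the classical Anosov density argument. Applying Theorem~\ref{theorem cocycle pointwise} yields the dichotomy: either $\underline{\dim}_B\mathrm{Graph}(E^s)\geq \dim M+1-A>\dim M$, which is conclusion (2), or $E^s$ is uniformly $C^1$ along $W^u_f$ and $h^u_F$-invariant. In the latter case, the $h^u_F$-invariance of $E^s$ is exactly the holonomy compatibility that forces $E^s\oplus E^u$ to be tangent to an $f$-invariant foliation, yielding joint integrability; combining $C^1$ regularity along $W^u_f$ with the standard smoothness of $E^s$ along $W^s$ and with the inherited regularity along $W^c$ (via bunching and Journé's lemma) upgrades this to global $C^1$ regularity of $E^s$.

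The principal technical obstacle is that $E^{cs}$ is only Hölder a priori, so $\mathrm{Gr}_k(E^{cs})$ is not obviously a globally $C^1$ fiber bundle over $M$; the second bunching hypothesis (a $u$-bunching condition) provides the missing regularity only along $W^u_f$-leaves, via the classical $C^1$ section theorem or the holonomy construction for $E^{cs}$. To reconcile this with the hypothesis of Theorem~\ref{theorem cocycle pointwise} one must invoke the technical variant of the non-fractal invariance principle announced in Section~\ref{sec: non-frac tech ver}, where the $C^1$ requirement on the ambient bundle needs only to hold along $W^u_f$. A secondary, and more routine, point is the passage from pointwise $h^u_F$-invariance of $E^s$ to genuine global joint integrability of $E^s\oplus E^u$ in arbitrary dimension, which I expect to follow by propagating local integrability along the minimal unstable foliation $W^u_f$ and checking that the resulting $su$-plaques glue into a genuine foliation under the $C^1$ regularity obtained above.
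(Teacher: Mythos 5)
Your overall strategy matches the paper's: set up the Grassmannian bundle $N=\mathrm{Gr}(\dim E^s,E^{cs})$, realize $E^s$ as an $F$-invariant section, check partial hyperbolicity of $F$ and identify $\alpha$ with $\alpha_s$, then invoke Theorem~\ref{theorem cocycle pointwise}. However, there are two substantive problems.

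The ``principal technical obstacle'' you identify does not exist, and your proposed workaround points to a theorem that does not say what you need. The second hypothesis
\[
\frac{\|Df^k|_{E^{u}(x)}\|}{m(Df^k|_{E^{u}(x)})} < \|Df^{k}|_{E^c(x)}\|^{-1}
\]
is not a condition yielding regularity of $E^{cs}$ only along $W^u$-leaves. Together with the first hypothesis, it is exactly the pair of bunching conditions in Proposition~\ref{prop ano} under which Hasselblatt's theorem gives that $E^{cs}$ and $E^u$ are \emph{globally} $C^1$ distributions on $M$. Consequently $\mathrm{Gr}(\dim E^s, E^{cs})$ is a genuine $C^1$ fiber bundle over $M$, and Theorem~\ref{theorem cocycle pointwise} applies unmodified. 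The ``technical variant'' with $C^1$ required only along $W^u_f$ that you appeal to in Section~\ref{sec: non-frac tech ver} is not there; that section is a quantitative restatement (what happens to the exponents and the dimension bound), not a weakening of the $C^1$ hypothesis on the ambient bundle. You should simply cite the global $C^1$ regularity of $E^{cs}$ from \cite{has95} up front.

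The passage from $h^u_F$-invariance to joint integrability of $E^s\oplus E^u$ is not ``routine'' and is genuinely glossed over. The $h^u_F$-holonomy in the Grassmannian bundle is abstractly constructed from the $W^u_F$-leaves of the bundle map $F$; it is not \emph{a priori} the map induced by the derivative $Dh^u_{xy}:E^{cs}(x)\to E^{cs}(y)$ of the base unstable holonomy. What the paper actually does (Lemma~\ref{lemma lip holonomy} and Proposition~\ref{prop su int c1+}) is (i) prove, using the global $C^1$ regularity of $E^{cs}$ and $E^u$, that $Dh^u_{xy}$ is uniformly Lipschitz in $d(x,y)$, (ii) form a second (Lipschitz) candidate section $\tilde\gamma_x(y)=Dh^u_{xy}(E^s(x))$, and (iii) run a fresh contraction argument using the quantitative overlap of the two partial-hyperbolicity exponents to show that $\Phi^s=\tilde\gamma_x$ on $W^u_{\rm loc}(x)$, hence $E^s$ is $Dh^u$-invariant, hence $h^u$ maps $W^s$-leaves to $W^s$-leaves. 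This is a separate estimate from the dichotomy in Theorem~\ref{theorem cocycle pointwise} and cannot be absorbed into ``the holonomy compatibility that forces joint integrability.'' Finally, for the upgrade of $C^1$ regularity of $E^s$, the clean route is that $W^s$ is a $C^2$ subfoliation of $W^{cs}$ (from HPS), giving uniform $C^1$ of $E^s$ along $W^{cs}$, then Journ\'e with $W^u$; your phrase ``inherited regularity along $W^c$'' does not correspond to anything established, since $E^s$ is not \emph{a priori} regular along $W^c$-leaves.
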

The assumption is not too restrictive. For instance, $f$ satisfies all the above assumptions if it is $C^1$-close to a volume-preserving partially hyperbolic Anosov diffeomorphism with $\dim E^u = 1$.  

Similarly, define  
\[
\alpha_c(k, x) := \liminf_{n \to +\infty}  
\frac{\sum_{i=1}^n \left( \log\|Df^{-k}|_{E^{s}(f^{ik}x)}\| - \log m(Df^{-k}|_{E^{c}(f^{ik}x)}) \right)}{\sum_{i=1}^n \log m(Df^{-k}|_{E^s(f^{ik}x)})}, 
\]
and  
\[
\alpha_c := \inf_{x \in M, k \in \mathbb{Z}^+} \alpha_c(k, x).
\]
For the central bundle $E^c$, we have:
\begin{theorem}\label{theorem c bunching ss}
Suppose $W^s$ is minimal and there exists a constant $k \in \mathbb{N}$ such that  
\[
\frac{\|Df^k|_{E^{c}(x)}\|}{m(Df^k|_{E^{s}(x)})} < \min\{m(Df^k|_{E^u(x)}),\|Df^{k}|_{E^s(x)}\|^{-1}\},
\]
then $\alpha_c \in (0,1)$, and the following holds:  
\begin{enumerate}
    \item If $E^c$ is uniformly $\alpha_c+$-H\"older continuous along $W^u$, then $E^c$ is uniformly $C^{1}$ along $W^u$.  
    \item If (1) does not hold, then $\underline{\dim}_{B} \mathrm{Graph}(E^c) > \dim M$.  
\end{enumerate}
\end{theorem}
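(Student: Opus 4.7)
The plan is to apply the non-fractal invariance principle (Theorem~\ref{theorem cocycle pointwise}) to the time-reversed diffeomorphism $g := f^{-1}$, whose unstable foliation $W^u_g = W^s_f$ is minimal by hypothesis. The definition of $\alpha_c$---written in terms of $Df^{-k}$ along forward $f$-orbits, which are precisely backward $g$-orbits---is arranged so as to coincide term-by-term with the invariant $\alpha$ of Theorem~\ref{theorem cocycle pointwise} associated with $g$; accordingly the conclusion of~(1) produces regularity of $E^c$ along $W^u_g = W^s_f$, in direct parallel with the three-dimensional statement of Theorem~\ref{theorem 3dim c}.

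The first step is to choose the bundle. Let $N \to M$ be the Grassmannian bundle of $c$-planes in the $Df$-invariant subbundle $E^s \oplus E^c$; this is a $C^r$ compact-fiber bundle. Define $F : N \to N$ as the Grassmannian action of $Dg$, and let $\Phi : x \mapsto E^c(x)$; this is a continuous $F$-invariant section. A projective-distortion calculation shows that the fiber derivative of $F^k$ at the section $\Phi$ is comparable, up to bounded multiplicative error controlled by operator norms on $\mathrm{Hom}(E^c, E^s)$, to $\|Dg^k|_{E^s}\|/m(Dg^k|_{E^c})$.

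Next I would verify that $F$ is partially hyperbolic over $g$ in the sense of Section~\ref{sec: non-frac tech ver}, i.e.,
\[
1 < \|F^k_x\| < m(Dg^k|_{E^u_g(x)}) = m(Df^{-k}|_{E^s(x)})
\]
uniformly in $x$ for some $k$. Using the duality $m(Df^{-k}|_{E^{\ast}(x)}) = 1/\|Df^k|_{E^{\ast}(f^{-k}x)}\|$, the upper bound reduces to the second bunching condition $\|Df^k|_{E^c}\|\,\|Df^k|_{E^s}\| < m(Df^k|_{E^s})$, while the lower bound is automatic from the domination $\|Df|_{E^s}\| < m(Df|_{E^c})$. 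Under the same substitution the formula for $\alpha(k,x)$ from Theorem~\ref{theorem cocycle pointwise} transforms into that for $\alpha_c(k,x)$, and the first bunching $\|Df^k|_{E^c}\| < m(Df^k|_{E^s})\,m(Df^k|_{E^u})$ gives a uniform upper bound $A < 1$, hence $\alpha_c \in (0,1)$.

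Finally I would invoke Theorem~\ref{theorem cocycle pointwise} and translate the dichotomy back to $f$: either $\Phi = E^c$ is uniformly $\alpha_c+$-H\"older along $W^u_g$ and hence uniformly $C^r$ along $W^u_g = W^s_f$ (in particular $C^1$, since $f \in \Diff^2$), yielding~(1); or else $\underline{\dim}_B \mathrm{Graph}(E^c) \geq \dim M + 1 - A > \dim M$, yielding~(2). The main obstacle is the bookkeeping in the third paragraph: the fiber-expansion rate of the induced Grassmannian dynamics near $E^c$ must be computed carefully from the dominated splitting, and the clean fiber-versus-base inequalities of Theorem~\ref{theorem cocycle pointwise} must be matched precisely with the two bunching hypotheses of the theorem. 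When $\dim E^s$ or $\dim E^c$ exceeds one, the scalar ratios become operator norms on $\mathrm{Hom}(E^c, E^s)$, and the resulting distortion constants must be absorbed into the $\liminf$ defining $\alpha_c$.
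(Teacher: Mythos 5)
Your overall architecture is the same as the paper's: reverse time, let $F$ be the bundle map induced by $Df^{-1}|_{E^{cs}}$ on the Grassmannian $N=\mathrm{Gr}(\dim E^c,\,E^s\oplus E^c)$ over $f^{-1}$, verify partial hyperbolicity of $F$ and $\alpha=\alpha_c$ (as in Lemma~\ref{lemma ph check}), and apply Theorem~\ref{theorem cocycle pointwise} using the minimality of $W^u_{f^{-1}}=W^s_f$. However, there is a genuine gap at the very first step: you assert that $N$ ``is a $C^r$ compact-fiber bundle.'' A priori $E^{cs}=E^s\oplus E^c$ is only H\"older continuous, so $N$ is merely a continuous bundle, whereas Theorem~\ref{theorem cocycle pointwise} requires a $C^r$ ($r\ge 1$) fiber bundle: its proof needs the holonomy curves $\gamma_x$ to be uniformly Lipschitz/$C^r$, and the conclusion ``uniformly $C^1$ along $W^s$'' cannot be extracted through a bundle with only H\"older regularity. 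The missing ingredient is precisely the first inequality $\|Df^k|_{E^c}\|/m(Df^k|_{E^s})<m(Df^k|_{E^u})$: this is Hasselblatt's bunching condition guaranteeing that $E^{cs}$ is $C^1$ \cite{has95}, hence that $N$ is a $C^1$ bundle, and the theorem then applies with $r=1$ (which is also why the conclusion is only $C^1$ along $W^s$, not $C^r$ as you claim).

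Relatedly, you misassign the roles of the two hypotheses. You correctly reduce the partial-hyperbolicity upper bound $\|F^k_x\|<m(Df^{-k}|_{E^s(x)})$ to the second inequality $\|Df^k|_{E^c}\|\,\|Df^k|_{E^s}\|<m(Df^k|_{E^s})$, but then claim the first inequality ``gives a uniform upper bound $A<1$, hence $\alpha_c\in(0,1)$.'' By definition, $A<1$ is exactly the fiber-versus-$m(Df^{-k}|_{E^s})$ comparison, i.e.\ the same statement as the partial-hyperbolicity bound you already derived from the second inequality; the first inequality compares with $m(Df^k|_{E^u})$ and would be relevant only if you were working over $f$ along $W^u_f$, as in Theorem~\ref{theorem s bunching}. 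Once the first inequality is used for the $C^1$-ness of $E^{cs}$ and the second for partial hyperbolicity, $A<1$, and $\alpha=\alpha_c$ (with the Lemma~\ref{lemma ph check}-style passage from the $E^c,E^s$ blocks to the full $E^{cs}$-norms, which also replaces your ``fiber derivative at the section'' by the supremum over the whole fiber demanded in the definition of $\|F^k_x\|$), your argument coincides with the paper's proof.
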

The assumption is also not too restrictive. For instance, $f$ satisfies all the above assumptions if $\dim E^s = \dim E^u= 1$.  
\section{Preliminaries}\label{sec: prel}
\subsection{Dimension of graphs}\label{sec kdim}
We recall some facts about the box dimension (see \cite{mat95} for further details). Let $A$ be a non-empty bounded subset of $\RR^n$. For any $\epsilon > 0$, let $N(A,\epsilon)$ denote the smallest number of $\epsilon$-balls required to cover $A$. The \textit{lower box dimension} and \textit{upper box dimension} of $A$ are defined by:
\[
\underline{\dim}_B A := \liminf_{\epsilon \to 0} \frac{\log N(A,\epsilon)}{-\log \epsilon}, \quad
\overline{\dim}_B A := \limsup_{\epsilon \to 0} \frac{\log N(A,\epsilon)}{-\log \epsilon}.
\]
Clearly, $\underline{\dim}_B A \leq \overline{\dim}_B A$. If the two values coincide, their common value is called the \textit{box dimension} of $A$. Alternatively, the number \( N(A,\epsilon) \) in the definition above can be replaced by the packing number \( P(A,\epsilon) \), which is the largest number of disjoint $\epsilon$-balls with centers in \( A \). 

Next, we focus on the dimension of the graph of a continuous section over a smooth manifold. Let \(\pi: N \to M\) be a $C^1$ vector bundle, and let \(\Phi: M \to N\) be a continuous section. The H\"older continuity of \(\Phi\) provides an upper bound on the box dimension of its graph:

\begin{proposition}
If \(\Phi\) is \(\beta\)-H\"older, then 
\[
\overline{\dim}_B \mathrm{Graph}(\Phi) \leq \dim M + (1-\beta)(\dim N - \dim M).
\]
\end{proposition}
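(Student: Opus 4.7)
The plan is to reduce the statement to the classical Euclidean estimate for graphs of $\beta$-Hölder functions by trivializing the fiber bundle locally. First, using compactness of $M$, cover $M$ by finitely many relatively compact open sets $\{U_i\}_{i=1}^{k}$ over which the $C^1$ bundle $\pi:N\to M$ admits $C^1$-trivializations $\tau_i:\pi^{-1}(U_i)\xrightarrow{\sim} U_i\times F$, where $F$ is the model fiber of dimension $d=\dim N-\dim M$. Choose relatively compact refinements $V_i\Subset U_i$ still covering $M$. Since upper box dimension is finitely stable (i.e., $\overline{\dim}_B(A\cup B)=\max\{\overline{\dim}_BA,\overline{\dim}_BB\}$), it suffices to bound $\overline{\dim}_B\bigl(\mathrm{Graph}(\Phi)\cap\pi^{-1}(V_i)\bigr)$ for each $i$.

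In each chart, write $\tau_i\circ\Phi(x)=(x,\phi_i(x))$ for $x\in V_i$. Since $\tau_i$ is $C^1$ on the compact set $\overline{V_i}$, it is bi-Lipschitz there, so $\phi_i:V_i\to F$ inherits $\beta$-Hölder regularity from $\Phi$ with some uniform constant $C$. The computation is then standard: set $m=\dim M$ and partition $V_i$ into $\asymp\epsilon^{-m}$ cubes of side length $\epsilon$. On each such cube $Q$, Hölder continuity gives $\mathrm{diam}\,\phi_i(Q)\leq C\epsilon^{\beta}$, so the graph piece over $Q$ is contained in a product box $Q\times B$ with $B\subset F$ of diameter $\leq C\epsilon^{\beta}$, which can be covered by at most $\asymp\epsilon^{-d(1-\beta)}$ balls of radius $\epsilon$ in the product metric. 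Summing over the $\asymp\epsilon^{-m}$ cubes yields $N\bigl(\mathrm{Graph}(\Phi)\cap\pi^{-1}(V_i),\epsilon\bigr)\lesssim \epsilon^{-m-d(1-\beta)}$, and taking $\limsup$ gives the desired bound $\overline{\dim}_B\mathrm{Graph}(\Phi)\leq m+d(1-\beta)=\dim M+(1-\beta)(\dim N-\dim M)$.

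No step is expected to be a genuine obstacle; the result is essentially folklore once the reduction to a chart is carried out. The only points requiring a bit of care are: (i) that the bi-Lipschitz constants of the $\tau_i$ and the Hölder constants of the $\phi_i$ can be chosen uniform in $i$, which is guaranteed by working on the relatively compact refinements $V_i\Subset U_i$ and using finiteness of the cover; and (ii) that the Riemannian metric on $N$ restricted to $\pi^{-1}(V_i)$ is bi-Lipschitz equivalent to the product metric on $V_i\times F$ pulled back via $\tau_i$, so that counting $\epsilon$-balls in either metric differs only by a multiplicative constant, which does not affect the box dimension.
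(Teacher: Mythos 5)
Your proposal is correct and is essentially the same argument as the paper's: cover $M$ by $\asymp\epsilon^{-\dim M}$ pieces of diameter $\epsilon$, use $\beta$-Hölder continuity to cover each graph piece by $\asymp\epsilon^{-(1-\beta)(\dim N-\dim M)}$ balls of radius $\epsilon$, and sum. The paper just phrases this directly in terms of an $\epsilon$-net $A_\epsilon\subset M$ rather than passing through explicit local trivializations, but the counting is identical.
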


\begin{proof}
For any \(\epsilon > 0\), let \( A_{\epsilon} \subset M \) be a set such that \(\cup_{x \in A_\epsilon} B(x,\epsilon) = M\), where \( B(x,\epsilon) \) denotes the open ball of radius \(\epsilon\) centered at \(x\). We choose \( A_\epsilon \) such that \(\#A_\epsilon < C_0 \epsilon^{-\dim M}\) for some constant \( C_0 \).

Since \(\Phi\) is \(\beta\)-H\"older, for each \(x \in A_\epsilon\), 
$$N(x,\epsilon):=N(\mathrm{Graph}(\Phi|_{B(x,\epsilon)}), \epsilon) \leq C_1 \epsilon^{(\beta-1)(\dim N - \dim M)}$$
for some uniform constant $C_1$. Consequently, 
\[
\overline{\dim}_B \mathrm{Graph}(\Phi) \leq \limsup_{\epsilon \to 0} \frac{\log \sum_{x \in A_\epsilon} N(x,\epsilon) }{-\log \epsilon}  \leq \dim M + (1-\beta)(\dim N - \dim M).
\]\end{proof}
\subsection{Regularity of distributions and foliations}
Let $M$ be a manifold of dimension $n \geq 2$, and let $F$ be a foliation of $M$. For any $r \geq 0$, there are three common ways to measure the regularity of $F$:  
\begin{enumerate}[(a)]
    \item $F$ is tangent to a $C^r$ distribution.  
    \item The foliation charts are $C^r$ diffeomorphisms.  
    \item The leaves and local holonomy maps are uniformly $C^r$.  
\end{enumerate}
Typically, we say that $F$ is a $C^r$ foliation if (b) holds. For $r \geq 1$, it is known that $\text{(a)} \Rightarrow \text{(b)} \Rightarrow \text{(c)}$, but in general, the converse implications do not hold \cite{psw97}.  For $r \geq 1$ and $r \notin \{2, 3, \dots\}$, we have $\text{(c)} \Rightarrow \text{(b)}$, which follows as a corollary of a regularity lemma by Journ\'e \cite{jou88}:  
\begin{proposition}[Journ\'{e}'s Lemma]\label{prop journee}
Let $F$ and $G$ be two transverse foliations of $M$ with uniformly $C^r$ leaves for some $r \geq 1$ and $r \notin \{2, 3, \dots\}$. If a continuous function $f: M \to \mathbb{R}$ is $C^r$ along the $F$-leaves and $G$-leaves, then $f$ is $C^r$.  
\end{proposition}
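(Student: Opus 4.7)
The plan is to reduce to a local statement and proceed by induction on $k := \lfloor r \rfloor$, with $r = k+\alpha$ and $\alpha \in (0,1]$; the hypothesis $r \notin \{2,3,\dots\}$ is equivalent to excluding $\alpha = 1$ whenever $k \geq 2$ (so either $r = 1$, or $r$ is non-integer). Since $F$ has uniformly $C^r$ leaves, near any point of $M$ I would fix a continuous chart $\Psi : U \times V \to M$ with each slice $\Psi(\cdot,y)$ a $C^r$ parametrization of an $F$-leaf, so that in the induced $(x,y)$-coordinates the $F$-leaves are $\{y = \text{const}\}$. By transversality the $G$-leaves become uniformly $C^r$ graphs $x \mapsto (x,\psi(x;z))$ parametrized continuously by a transverse coordinate $z$. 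The hypotheses then read that $f(\cdot, y)$ is $C^r$ in $x$ uniformly in $y$, and $x \mapsto f(x,\psi(x;z))$ is $C^r$ in $x$ uniformly in $z$, and the conclusion becomes that $f$ is jointly $C^r$ in $(x,y)$.

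For the base case (which I also need at lower exponents during the induction), I connect nearby points $p,q$ by a zigzag: an $F$-segment from $p$ to the intersection $p'$ of the $F$-leaf of $p$ with the $G$-leaf of $q$, then a $G$-segment from $p'$ to $q$. Transversality and the local product structure force each leg to have leaf-distance of the same order as $|p-q|$, and the uniform H\"older (or, for $r=1$, $C^1$) bounds along each leaf yield the desired joint regularity. For the inductive step $r = k+\alpha$ with $k\geq 1$ and $\alpha \in (0,1)$, I would show that each horizontal partial $\partial_{x_i} f$ is $C^{r-1}$ along both foliations and then apply the inductive hypothesis at exponent $r-1$ (which remains outside $\{2,3,\dots\}$). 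Regularity along $F$-leaves is built into the hypothesis. Along a $G$-leaf the core step is to expand $f(x,\psi(x;z))$ in $x$ by the chain rule and match its order-$k$ Taylor polynomial with the horizontal order-$k$ Taylor polynomial of $f$ at the same base point; these two polynomials must coincide because both represent the $C^k$-in-$x$ Taylor expansion of the same function at the same point, and a careful accounting of the H\"older-$\alpha$ remainders extracts a uniform $C^{r-1}$ Taylor expansion for $\partial_{x_i} f$ along the leaf.

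The delicate step is this Taylor-matching and remainder transfer, and it is here that the assumption $r \notin \{2,3,\dots\}$ enters essentially. For integer $r \geq 2$ the natural remainder class in the comparison becomes a Zygmund rather than a H\"older class, and the inductive estimates fail to close; correspondingly, there are classical examples of functions separately $C^r$ along two transverse coordinate directions that are not jointly $C^r$ at integer exponents, so the restriction is sharp. Everything else in the plan is routine once this Taylor comparison and the associated H\"older book-keeping are carried out, so I would devote the bulk of the writing to the chain-rule identification and the remainder estimates, and would separately verify that the local foliation chart $\Psi$ above can be constructed with the stated regularity purely from uniform $C^r$ regularity of the $F$-leaves plus continuity in the transverse direction.
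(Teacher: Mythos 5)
The paper does not supply a proof of this statement; it is quoted verbatim from Journ\'e \cite{jou88} in the preliminaries and used as a black box, so there is no in-paper argument to compare against. Evaluating your sketch on its own terms, there is a concrete error in the local normalization that then undermines the key step. In a chart where the $F$-leaves are the horizontal slices $\{y=\mathrm{const}\}$, transversality of $G$ to $F$ means $TG$ projects isomorphically onto the $y$-factor, so the $G$-leaves are locally graphs over the transverse coordinate $y$, of the form $y\mapsto(\phi(y;z),y)$, and \emph{not} graphs $x\mapsto(x,\psi(x;z))$ over $x$ as you write. With your parametrization both hypotheses read as $C^r$-regularity of $f$ in the $x$-variable only, so they could never produce any control over $y$-derivatives and no version of the joint conclusion could follow.

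Once the chart is corrected, the $G$-hypothesis becomes that $y\mapsto f(\phi(y;z),y)$ is $C^r$ in $y$, and your chain-rule expansion reads $\partial_{y_j}\bigl[f(\phi(y;z),y)\bigr]=\sum_i \partial_{x_i}f\cdot\partial_{y_j}\phi_i+\partial_{y_j}f$. The inductive step then has to show that $\partial_{x_i}f$, which you only control along $F$-leaves, and $\partial_{y_j}f$, which at that stage you have not even exhibited as a well-defined continuous object, are both $C^{r-1}$ \emph{along $G$-leaves}; the transverse regularity of $\partial_{x_i}f$ is precisely the kind of conclusion the induction is supposed to deliver, so the step as written is circular and needs a genuinely new ingredient to close. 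Your observation that the exclusion $r\notin\{2,3,\dots\}$ is tied to the Zygmund phenomenon at integer exponents is correct and is the right intuition, but the rest of the strategy requires repair. For contrast, Journ\'e's own argument avoids any induction on $\lfloor r\rfloor$: it patches one-variable polynomial approximations of order $\lfloor r\rfloor$ along the two straightened foliations into a joint polynomial approximation of the same order via a finite-difference/Vandermonde-type estimate, and then invokes a Campanato-type converse Taylor characterization of $C^{k,\alpha}$, which is where the sharpness at integer exponents surfaces.
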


\subsection{Invariant distributions of partially hyperbolic Anosov diffeomorphisms}\label{sec pre anosov}  
In this subsection, we summarize some results concerning the invariant distributions and foliations of partially hyperbolic Anosov diffeomorphisms.  \begin{proposition}\label{prop ano}
Let $f \in\Diff^r(M),r\geq 2$ be a partially hyperbolic Anosov diffeomorphism that contracts $E^c$. Then:  
\begin{itemize}
    \item $E^s$, $E^u$, and $E^{cs}$ are uniquely integrable to continuous foliations $W^s$, $W^u$, and $W^{cs}$ respectively, with uniformly $C^r$ leaves \cite{hps77}.  
    \item $W^s$ $C^r$-subfoliates $W^{cs}$ \cite{hps77}.  
    \item If $E^{cs}$ and $E^u$ are bunching, i.e., there exists $k \in \mathbb{Z}^+$ such that  
    \[
    \frac{\|Df^k|_{E^u(x)}\|}{m(Df^k|_{E^u(x)})} < \|Df^k|_{E^{c}(x)}\|^{-1}, \quad  
    \frac{\|Df^k|_{E^{c}(x)}\|}{m(Df^k|_{E^{s}(x)})} < m(Df^k|_{E^u(x)}), \quad \forall x \in M,
    \]
then $E^{cs}$ and $E^u$ are $C^{1}$ respectively \cite{has95}, the center-stable holonomy $h^{cs}$ and the unstable holonomy $h^u$ are uniformly $C^{1}$ respectively \cite{psw97}.  
\end{itemize}
\end{proposition}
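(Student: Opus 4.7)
The plan is to reduce each of the three bullet points of Proposition \ref{prop ano} to the classical theorems explicitly cited in its statement; this is essentially an organizational proposition assembling facts from \cite{hps77,has95,psw97}. For the first bullet, the unique integrability of $E^s$ and $E^u$ and the uniform $C^r$-regularity of their leaves are exactly the content of the Hirsch-Pugh-Shub stable/unstable manifold theorem. For $E^{cs}$ the key observation is that, because $f$ uniformly contracts both $E^s$ and $E^c$, the splitting $TM=E^{cs}\oplus E^u$ is actually an Anosov splitting (with $E^{cs}$ uniformly contracted). Re-applying the HPS stable manifold theorem to this coarser Anosov splitting produces a continuous foliation $W^{cs}$ with uniformly $C^r$ leaves, the leafwise smoothness being the graph-transform component of the same theorem.

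For the second bullet, restrict $f$ to an individual $W^{cs}$-leaf $L$. Since $L$ is a $C^r$ submanifold invariant under $f$, and $Df|_L$ respects the splitting $E^s\oplus E^c$ with $E^s$ strictly more contracting than $E^c$, a further application of the stable manifold theorem \emph{inside} $L$ gives a $C^r$ subfoliation of $L$ by $W^s$-leaves. Collecting these subfoliations across the $W^{cs}$-leaves of $M$ yields the desired $C^r$-subfoliation globally.

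For the third bullet, the two displayed bunching hypotheses are precisely the $1$-bunching inequalities needed to apply Hasselblatt's $C^1$-regularity theorem \cite{has95} to the Anosov splittings $TM=E^{cs}\oplus E^u$ and $TM=E^s\oplus E^{cu}$, forcing $E^{cs}$ and $E^u$ to be $C^1$. The same bunching rates then verify the hypotheses of the $C^r$ section theorem of Pugh-Shub-Wilkinson \cite{psw97} applied to the natural holonomy cocycles, yielding the uniform $C^1$ regularity of $h^{cs}$ and $h^u$. The only real obstacle is bookkeeping: one must check that the rate inequalities stated in the proposition match, line for line, those assumed in each quoted theorem, and that the contracting-center hypothesis really does upgrade the partially hyperbolic splitting $E^s\oplus E^c\oplus E^u$ to a genuine Anosov splitting $E^{cs}\oplus E^u$ to which the HPS, Hasselblatt, and Pugh-Shub-Wilkinson results apply verbatim.
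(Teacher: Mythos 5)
The paper never proves Proposition \ref{prop ano}: it is stated in the Preliminaries section purely as a summary of cited results from \cite{hps77}, \cite{has95}, and \cite{psw97}, so there is no in-text argument to compare against. Your plan of tracing each bullet back to the quoted theorems is therefore the right kind of justification, and your treatment of the first two bullets captures the standard reductions (Anosovness of the coarse splitting $E^{cs}\oplus E^u$ for \cite{hps77}, and the leafwise graph transform / normal hyperbolicity argument for the $C^r$ subfoliation; note though that a $W^{cs}$-leaf $L$ is typically not $f$-invariant, so one should phrase this at the level of the $W^{cs}$-lamination and its plaques in the HPS framework rather than ``restricting $f$ to $L$'').

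The one genuine slip is in the third bullet. You write that the two displayed inequalities are the $1$-bunching hypotheses for ``the Anosov splittings $TM=E^{cs}\oplus E^u$ and $TM=E^s\oplus E^{cu}$.'' But $E^s\oplus E^{cu}$ is not an Anosov splitting here: since $f$ uniformly contracts $E^c$, the bundle $E^{cu}=E^c\oplus E^u$ is not uniformly expanded, so it is only a dominated splitting and Hasselblatt's Anosov regularity theorem does not apply to it as stated. In fact both bunching inequalities in the proposition pertain to the single Anosov splitting $E^{cs}\oplus E^u$: the first one,
\[
\frac{\|Df^k|_{E^u}\|}{m(Df^k|_{E^u})} < \|Df^k|_{E^{c}}\|^{-1}\approx \|Df^k|_{E^{cs}}\|^{-1},
\]
controls the oscillation of the expansion in $E^u$ and yields $C^1$ regularity of the stable bundle $E^{cs}$, while the second one,
\[
\frac{\|Df^k|_{E^{c}}\|}{m(Df^k|_{E^{s}})}\approx\frac{\|Df^k|_{E^{cs}}\|}{m(Df^k|_{E^{cs}})} < m(Df^k|_{E^u}),
\]
controls the oscillation of the contraction in $E^{cs}$ and yields $C^1$ regularity of the unstable bundle $E^u$ (equivalently, it is the first condition applied to $f^{-1}$, for which $E^u$ becomes the stable side of the same Anosov splitting). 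Replacing your second splitting with ``apply \cite{has95} to the stable side of the Anosov splitting of $f^{-1}$'' fixes the argument, and then the application of \cite{psw97} for the $C^1$ holonomies $h^{cs}$ and $h^u$ goes through as you indicate.
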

Moreover, for $M = \mathbb{T}^3$, we have:  
\begin{proposition}\label{prop anosov t3}
Let $f \in\Diff^r(\mathbb{T}^3),r\geq 2$ be a partially hyperbolic Anosov diffeomorphism that contracts $E^c$. Then:  
\begin{itemize}
    \item $f$ is dynamically coherent, i.e., $E^{cs}$, $E^{cu} $, and $E^c$ are integrable to foliations whose leaves are $C^{1}$ \cite{bbi09,por15} (but may fail to be $C^2$ even when $r = \infty$ \cite{jpl95}).  
    \item If $E^c$ is $r$-dominated, i.e., there exist constants $c > 0$ and $\lambda >1$ such that  
    \[
    |Df^n|_{E^c(x)}|^r \geq c \lambda^n |Df^n|_{E^s(x)}|,\quad \forall x \in M, n \in \ZZ^+,
    \]
then the leaves of $W^c$ are $C^r$ immersed manifolds \cite{hps77}.  
    \item $E^{cs}$ is always bunching and $C^{1}$ \cite{has95}.  
    \item $W^u$ and $W^{cs}$ are minimal \cite{fra69, man74}.  
    \item If $E^s \oplus E^u$ is jointly integrable, then $W^s$ is minimal \cite{rgz17}.  
\end{itemize}
\end{proposition}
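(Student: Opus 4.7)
The plan is to verify each bullet by invoking the appropriate published result, indicating the essential input in the three-dimensional setting. A common tool is Franks' theorem, which provides a continuous semi-conjugacy $h:\TT^3\to\TT^3$ between $f$ and its linearization $L$, reducing several statements to facts about the linear model.

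For dynamical coherence, I would follow Brin-Burago-Ivanov and Potrie: construct branching center-stable and center-unstable foliations from the partially hyperbolic splitting, lift to the universal cover, and then use properness of the semi-conjugacy with $L$ together with a Reeb-stability type argument to upgrade these branching objects to genuine foliations with $C^1$-immersed leaves. The $C^2$ obstruction is a separate phenomenon arising from resonances among the three Lyapunov exponents, and is witnessed by the counterexamples in \cite{jpl95}. For the $C^r$-leaf statement under $r$-domination, I would directly apply the Hirsch-Pugh-Shub $C^r$-section theorem: the inequality $|Df^n|_{E^c(x)}|^r \geq c\lambda^n |Df^n|_{E^s(x)}|$ gives precisely the spectral gap that makes the graph transform a contraction on the bundle of $r$-jets along $W^c$, producing $C^r$-immersed center leaves.

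For the automatic $C^1$-regularity of $E^{cs}$, the plan is to appeal to Hasselblatt's codimension-one regularity theorem: since $\dim E^u = 1$, the distribution $E^{cs}$ has codimension one, and in that regime Hasselblatt's argument yields unconditional $C^1$-regularity, bypassing the need to verify a bunching inequality by hand. Minimality of $W^u$ and $W^{cs}$ is classical Franks-Manning: the semi-conjugacy $h$ carries their leaves to dense leaves of the corresponding linear foliations on $\TT^3$, and properness of $h$ transfers density back.

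Finally, the joint integrability-implies-minimality statement follows the line of argument in \cite{rgz17}: joint integrability of $E^s \oplus E^u$ produces an $f$-invariant codimension-one foliation $W^{su}$ containing the leaves of $W^s$ and $W^u$, and a careful analysis combining this structure with density of $W^u$ and the dynamics on the center transversal forces $W^s$ to be minimal. The main delicacy in assembling this proposition is not a single technical hurdle but rather matching the precise version of partial hyperbolicity used in each cited reference with the conventions assumed here; once this bookkeeping is done, each item is an immediate consequence of a known theorem.
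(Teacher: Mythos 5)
The paper itself provides no proof of this proposition: it is recorded in Section~\ref{sec pre anosov} as a list of known facts, with each bullet handed off to the cited reference. Your sketch is therefore in the same spirit as what the paper intends, and for most bullets it is an adequate expansion of those citations.

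The one place that needs tightening is the third bullet. The proposition asserts two things: $E^{cs}$ is \emph{bunching}, and $E^{cs}$ is $C^1$. You invoke a codimension-one version of Hasselblatt's theorem to deduce $C^1$-ness while explicitly saying you are ``bypassing the need to verify a bunching inequality by hand.'' But bunching is part of the claim, not something to be avoided, and moreover it is used downstream in the paper: the $C^1$ center-stable holonomy $h^{cs}$ invoked in Proposition~\ref{prop ec imply wc} comes from the bunching hypothesis of Proposition~\ref{prop ano} via \cite{psw97}, so leaving bunching unverified would break that later argument. The fix is immediate and is really the content of the bullet: since $\dim E^u = 1$ one has $\|Df^k|_{E^u(x)}\| = m(Df^k|_{E^u(x)})$, so the $E^{cs}$-bunching inequality
\[
\frac{\|Df^k|_{E^u(x)}\|}{m(Df^k|_{E^u(x)})} < \|Df^k|_{E^{c}(x)}\|^{-1}
\]
reduces to $1 < \|Df^k|_{E^{c}(x)}\|^{-1}$, which holds precisely because $E^c$ is uniformly contracted. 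In other words, codimension one plus contracting center makes bunching automatic, and Hasselblatt's theorem is then applied \emph{through} bunching, not around it. With that one-line verification added, your proposal matches what the cited literature delivers.
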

\subsection{Periodic data}\label{sec periodic}
It is known that a $C^\infty$ Anosov diffeomorphism $f$ on $\mathbb{T}^m$ is topologically conjugate to a linear Anosov automorphism $L$ \cite{fra69, man74}. However, the conjugacy may fail to be $C^1$. An obstruction arises because, if $f$ is $C^1$ conjugate to $L$, then $f$ and $L$ must have the same \textit{periodic data}, i.e., $Df^n(p)$ is conjugate to $DL^n(p)$ for every periodic point $p$ with $f^n(p) = p$.  

For $m = 2, 3$, periodic data is the only obstruction, i.e., $f$ is $C^\infty$ conjugate to $L$ if and only if they have the same periodic data \cite{lmmt2,lla92, dg24}. However, for $m \geq 4$, $f$ may not be $C^1$ conjugate to $L$ even if they share the same periodic data \cite{lla92}.  

Let $f: \mathbb{T}^3 \to \mathbb{T}^3$ be a partially hyperbolic Anosov diffeomorphism. We say that $f$ and $L$ have the same $\ast$-data ($\ast = s, c, u$) if $Df^n|_{E^\ast}(p) = DL^n|_{E^\ast}(p)$ for every periodic point $p$ with $f^n(p) = p$. We summarize some results about $\ast$-data below:  
\begin{proposition}\label{prop periodic data}
Let $f \in\Diff^2(\mathbb{T}^3)$ be a partially hyperbolic Anosov diffeomorphism that contracts $E^c$. Then:  
\begin{itemize}
    \item If $f$ is volume preserving, then $W^c$ is Lipschitz continuous if and only if $f$ has the same $s$-periodic data as $L$ \cite{gog12}.
    \item $E^s \oplus E^u$ is jointly integrable if and only if $f$ has the same $c$-periodic data as $L$ \cite{gs20}.
    \item For $f \in\Diff^{2+}(\mathbb{T}^3)$, $E^{cs}$ is a $C^{2+}$ distribution if and only if $f$ has the same $u$-periodic data as $L$ \cite{gu23}.
    \item $f \in\Diff^2(\mathbb{T}^3)$ is $C^\infty$ conjugate to $L$ if and only if $f$ has the same periodic data as $L$ \cite{gg08, gog17}.
\end{itemize}
\end{proposition}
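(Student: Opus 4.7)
The statement is a compilation proposition: each of the four bullets is a theorem from the cited literature, so my plan is to verify that the hypotheses in each bullet match the settings of the cited papers and to sketch the mechanism behind each equivalence, rather than reprove them from scratch.

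For the first bullet (due to Gogolev), the direction ``same $s$-periodic data $\Rightarrow W^c$ Lipschitz'' proceeds by noting that in the volume preserving setting the $s$-periodic data of $f$ and $L$ determine, via a Livsic-type cohomological equation, the stable Jacobian cocycle up to a coboundary. Combined with the fact (from Proposition \ref{prop anosov t3}) that $E^{cs}$ is always $C^{1}$ and the leaves of $W^c$ are $C^{1}$ immersed submanifolds, this coboundary information is enough to upgrade the center holonomy along stable leaves to Lipschitz regularity. The converse direction uses the fact that Lipschitz center foliations force stable Jacobians to agree with the linear model on periodic orbits.

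For the second bullet (Gan--Shi), the plan is: if $E^s\oplus E^u$ is jointly integrable, then the integral leaves intersect periodic orbits in a way that forces the center cocycle to match the linear one. More precisely, on each periodic orbit one can build a closed path in the $su$-foliation and the holonomy around this path must act trivially on $E^c$, which yields $Df^n|_{E^c}(p)=DL^n|_{E^c}(p)$. The converse uses the Gan--Shi rigidity theorem: matching $c$-data is enough to prove joint integrability via a bootstrap through the Hirsch--Pugh--Shub stable manifold theorem for the relevant skew product. I would simply quote this result.

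For the third bullet (Gu), the key is that $E^{cs}$ being $C^{2+}$ transfers regularity of the unstable cocycle across unstable holonomies by Journé's lemma (Proposition \ref{prop journee}), which in turn forces the unstable Jacobian to be cohomologous to a constant. Matching $u$-periodic data then follows from a Livsic-type obstruction argument; conversely, matching $u$-data is the exact condition that removes the obstruction and allows bootstrap of $E^{cs}$ to $C^{2+}$. For the fourth bullet (Gan--Gogolev and Gogolev), one reduces to the three previous bullets: matching all periodic data means $s$-, $c$-, and $u$-data all match, so one gets simultaneously that $E^s\oplus E^u$ is jointly integrable, $W^c$ is Lipschitz, and $E^{cs}$ has higher regularity; these combined are known to yield a $C^\infty$ conjugacy to $L$ via the matching periodic data theorems of Kalinin--Sadovskaya and de la Llave, adapted to the partially hyperbolic Anosov setting on $\mathbb{T}^3$. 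The main obstacle in such a plan is not any single bullet, but ensuring that the hypotheses (volume preservation for bullet 1, $C^{2+}$ regularity for bullet 3) are quoted correctly; since each bullet is a direct citation, my ``proof'' ultimately consists of invoking the four references.
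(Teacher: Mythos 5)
Your approach matches the paper's: Proposition~\ref{prop periodic data} is stated in the preliminaries as a compilation of known results with no internal proof, so invoking \cite{gog12}, \cite{gs20}, \cite{gu23}, and \cite{gg08,gog17} with hypotheses matched is exactly what the paper does. The heuristic mechanisms you sketch are a harmless bonus, though the fourth bullet is not actually proved in \cite{gg08,gog17} by reducing to the first three; those papers give a direct periodic-data rigidity argument, so your final sentence should be read as intuition rather than as the literature's route.
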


\section{Invariant sections over partially hyperbolic systems}\label{sec graph}
The aim of this section is to prove Theorem \ref{theorem cocycle pointwise}. 
\subsection{Excessive H\"older regularity implies holonomy invariance}
Recall that $(h^u_F)_{xy}: N_x \to N_y$ is the unstable holonomy map of $W^u_F$. For every $x \in M$, define  
$$\gamma_x: W^u_\text{loc}(x) \to N, \quad y \mapsto (h^u_F)_{xy} (\Phi(x)),$$  
where $\gamma_x$ is uniformly $C^r$.  

To describe the holonomy invariance of $\Phi$, it is convenient to define the following \textit{obstruction function} (see also \cite{hnw02, diaz19}): given $x \in M$ and $\delta > 0$, let  
$$\Delta_\delta(x):= \sup_{t \in W^{u}_\delta(x)} d(\Phi(t), \gamma_x(t)),$$
where $W^{u}_\delta(x)$ denotes the unstable disk centered at $x$ with radius $\delta$.
\begin{lemma}\label{lemma bounded type}
For any $\epsilon > 0$ and $k \in \mathbb{Z}^+$, there exists a constant $\delta > 0$ such that for any $x \in M$, $t \in W^u_\delta(x)$, and $n \in \mathbb{Z}^+$, the following inequalities hold:
$$
\|F^{nk}_{f^{-nk}t}\| \leq \prod_{i=1}^{n} \|F^k_{f^{-ik}x}\|^{1+\epsilon}, \quad 
d(f^{-nk}x, f^{-nk}t) \leq \delta \prod_{i=1}^{n} m(D f^k|_{E^u(f^{-ik}x)})^{-1+\epsilon}.
$$
\end{lemma}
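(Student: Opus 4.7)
My plan is to treat this as a bounded distortion / Pesin-type telescoping estimate. The backward $f$-orbit of any $t \in W^u_\delta(x)$ stays uniformly close to that of $x$ because $f^{-k}$ contracts $W^u_f$, so the per-step quantities $\|F^k_{f^{-jk}t}\|$ and $\|Df^{-k}|_{E^u(f^{-jk}t)}\|$ differ from their counterparts based at $f^{-jk}x$ only by a controlled multiplicative error; telescoping this error yields the exponents $1+\epsilon$ and $-1+\epsilon$.

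The two ingredients I need are (i) a uniform positive lower bound $L>0$ with $\log\|F^k_x\|\geq L$ and $\log m(Df^k|_{E^u(x)})\geq L$ for all $x\in M$, available from compactness of $M$ together with the standing assumption $1 < \|F^k_x\| < m(Df^k|_{E^u(x)})$; and (ii) uniform continuity (again by compactness) of the continuous functions $x\mapsto \log\|F^k_x\|$ and $x\mapsto\log m(Df^k|_{E^u(x)})$. Combining these, for any prescribed $\epsilon>0$ one finds $\rho>0$ such that $d(p,q)<\rho$ implies both $\|F^k_p\|\leq \|F^k_q\|^{1+\epsilon/2}$ and $m(Df^k|_{E^u(p)})^{-1}\leq m(Df^k|_{E^u(q)})^{-1+\epsilon/2}$. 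I would then pick $\delta\in(0,\rho)$ small enough that, for every $x$, the unstable disk $W^u_\delta(x)$ is contained in the $\rho$-ball around $x$ in $M$.

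The two inequalities are then proved in parallel by induction on $n$. For the distance bound, I would apply the mean value inequality along the unstable segment from $f^{-(j-1)k}x$ to $f^{-(j-1)k}t$: any intermediate point $\xi$ lies within $\rho$ of $f^{-(j-1)k}x$ thanks to the uniform $W^u$-contraction of $f^{-k}$ (applied at the previous inductive step), so (ii) controls $\|Df^{-k}|_{E^u(\xi)}\|=m(Df^k|_{E^u(f^{-k}\xi)})^{-1}$ by $m(Df^k|_{E^u(f^{-jk}x)})^{-1+\epsilon/2}$. Telescoping, and using that the intrinsic Riemannian distance is dominated by the intrinsic $W^u$-distance on a leaf, yields the second claimed estimate. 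For the norm bound, the chain rule along the orbit gives $\|F^{nk}_{f^{-nk}t\,}\|\leq \prod_{j=1}^n\|F^k_{f^{-jk}t\,}\|$, and each factor is at most $\|F^k_{f^{-jk}x}\|^{1+\epsilon/2}$ by the continuity estimate applied at $f^{-jk}t$ versus $f^{-jk}x$. Running the argument with $\epsilon/2$ in place of $\epsilon$ throughout delivers the stated exponents $1+\epsilon$ and $-1+\epsilon$.

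The only potentially delicate point is the apparent circularity between ``the backward orbit of $t$ stays within $\rho$ of the backward orbit of $x$'' and ``the distance is bounded by the telescoping product.'' This is resolved cleanly by the a priori contraction: since $f^{-k}$ strictly contracts $W^u_f$, the intrinsic unstable distance satisfies $d_u(f^{-jk}x,f^{-jk}t)\leq d_u(x,t)<\delta<\rho$ for every $j\geq 0$, independently of the finer multiplicative bound being derived. Thus the closeness required to invoke (ii) is guaranteed up front, and the two inductions are not truly intertwined. A minor technical point is that $\|F^k_x\|$ is a supremum over the compact fiber $N_x$, so continuity of $x\mapsto\|F^k_x\|$ uses continuity of the $C^1$ bundle map $F$ together with fiber compactness, both of which are hypotheses.
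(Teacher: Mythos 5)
Your proposal is correct and takes essentially the same route as the paper: continuity of $F$ and $Df$ together with compactness of $M$ give the local per-step bounds $\|F^k_t\|<\|F^k_x\|^{1+\epsilon}$ and $\|Df^{-k}|_{E^u(t)}\|<\|Df^{-k}|_{E^u(x)}\|^{1-\epsilon}$ for $t\in W^u_\delta(x)$, the backward $f^{-k}$-orbit of $t$ stays within $W^u_\delta$ of that of $x$ because $f^{-k}$ contracts $W^u_f$, and then the chain rule telescopes both quantities. Your explicit remark resolving the apparent circularity via the a priori $W^u$-contraction is just an unpacking of what the paper's short proof uses implicitly.
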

\begin{proof}
By the continuity of $F$ and $D f$, for any $\epsilon > 0$ and $k \in \mathbb{Z}^+$, there exists a constant $\delta > 0$ such that for every $x \in M$ and $t \in W^u_\delta(x)$,
$$\|F_t^k\| < \|F_x^k\|^{1+\epsilon}, \quad \| D f^{-k}|_{E^u(t)}\| < \| D f^{-k}|_{E^u(x)}\|^{1-\epsilon},$$
which implies 
$$\|F^{nk}_{f^{-nk}t}\| \leq \prod_{i=0}^{n-1} \|F^k_{f^{(-n+i)k}t}\| \leq \prod_{i=0}^{n-1} \|F^k_{f^{(-n+i)k}x}\|^{1+\epsilon} = \prod_{i=1}^{n}\|F^k_{f^{-ik}x}\|^{1+\epsilon}$$
and
\begin{eqnarray*}
  d(f^{-nk}x, f^{-nk}t) & \leq & d(x,t) \sup_{\tau \in W^u_\delta(x)} \| D f^{-nk}|_{E^u(\tau)}\| \leq  \delta \prod_{i=0}^{n-1} \sup_{\tau \in W^u_\delta(f^{-ik}x)} \| D f^{-k}|_{E^u(f^{-ik}\tau)}\| \\
  & < & \delta \prod_{i=0}^{n-1} \| D f^{-k}|_{E^u(f^{-ik}x)}\|^{1-\epsilon} =  \delta \prod_{i=1}^{n} m( D f^k|_{E^u(f^{-ik}x)})^{-1+\epsilon}
\end{eqnarray*}
for every $n \in \mathbb{Z}^+$. 
\end{proof}
\begin{lemma}\label{lemma exce holder}
If $\Phi$ is $\theta$-H\"older with $\theta > \alpha(k,x)$ for some $x \in M$ and $k \in \mathbb{Z}^+$, then there exists some $\delta > 0$ such that $\Delta_\delta(x) = 0$.
\end{lemma}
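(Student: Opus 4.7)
\textbf{Proof plan for Lemma \ref{lemma exce holder}.} The strategy is to pull everything back by $f^{-nk}$ along the unstable leaf of $x$, use that $F$ expands fibers strictly slower than $f$ expands on $E^u$, and exploit the gap between $\theta$ and $\alpha(k,x)$.

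The first step is to record the covariance $F\circ\gamma_x=\gamma_{fx}\circ f$ on $W^u_{\mathrm{loc}}(x)$, which holds because $\gamma_x(t)$ is by definition the unique point of $W^u_F(\Phi(x))$ in the fiber $N_t$ and $F$ preserves $W^u_F$. Combined with the $F$-invariance $F\circ\Phi=\Phi\circ f$, this yields, for every $t\in W^u_\delta(x)$ and $n\in\mathbb{Z}^+$, the iteration
\[
d(\Phi(t),\gamma_x(t))\le \|F^{nk}_{f^{-nk}t}\|\cdot d(\Phi(f^{-nk}t),\gamma_{f^{-nk}x}(f^{-nk}t)).
\]
Since $\gamma_{f^{-nk}x}(f^{-nk}x)=\Phi(f^{-nk}x)$, and $\gamma$ is uniformly Lipschitz (being uniformly $C^r$) while $\Phi$ is $\theta$-H\"older, the triangle inequality on a small enough $W^u_\delta(x)$ bounds the second factor by $C\,d(f^{-nk}x,f^{-nk}t)^\theta$ for a constant $C$ independent of $n$ and $t$.

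Next, given $\theta>\alpha(k,x)$, I would choose $\epsilon>0$ so small that $(1+\epsilon)\alpha(k,x)-(1-\epsilon)\theta<0$, and pick the corresponding $\delta$ from Lemma \ref{lemma bounded type}. Plugging in its two estimates yields
\[
d(\Phi(t),\gamma_x(t))\le C\delta^\theta\exp\!\left(\sum_{i=1}^n\bigl[(1+\epsilon)\log\|F^k_{f^{-ik}x}\|-(1-\epsilon)\theta\log m(Df^k|_{E^u(f^{-ik}x)})\bigr]\right).
\]
Dividing the exponent by $\sum_{i=1}^n\log m(Df^k|_{E^u(f^{-ik}x)})$, which is positive and tends to $+\infty$ by the uniform expansion of $f$ on $E^u$, the $\liminf$ of the resulting ratio equals $(1+\epsilon)\alpha(k,x)-(1-\epsilon)\theta<0$, by the very definition of $\alpha(k,x)$. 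Hence along a subsequence $n_j\to\infty$ the exponent tends to $-\infty$, so the right-hand side tends to $0$, which forces $d(\Phi(t),\gamma_x(t))=0$ uniformly in $t\in W^u_\delta(x)$, i.e.\ $\Delta_\delta(x)=0$.

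The only delicate point is choosing $\epsilon$ strictly smaller than the gap $\theta-\alpha(k,x)$; Lemma \ref{lemma bounded type} is stated precisely so that the perturbed exponents $1\pm\epsilon$ do not destroy the strict inequality. Apart from this bookkeeping the argument is a clean backward-iteration estimate, with no further obstacle.
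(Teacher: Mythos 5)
Your argument is correct and follows essentially the same route as the paper's proof: iterate the invariance identity backwards $n$ steps, bound the base term via H\"older continuity of $\Phi$ and Lipschitzness of $\gamma_x$, feed in the two estimates from Lemma \ref{lemma bounded type}, and let the gap $(1+\epsilon)\alpha(k,x)-(1-\epsilon)\theta<0$ drive the bound to zero along a subsequence realizing the $\liminf$ in the definition of $\alpha(k,x)$. If anything, you are slightly more explicit than the paper about the subsequence extraction step, which the paper compresses into the expression $\prod m^{\alpha(k,x)(1+\epsilon)+\theta(\epsilon-1)}$.
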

\begin{proof}
For any $\epsilon > 0$ with $\alpha(k,x) < \frac{1-\epsilon}{1+\epsilon} \theta$, choose $\delta$ small enough such that Lemma \ref{lemma bounded type} holds. Denote by $C_0$ and $L_0$ the H\"older constant and Lipschitz constant of $\Phi$ and $\gamma_x$ respectively. For any $t \in W^u_\delta(x)$ and $n \in \mathbb{Z}^+$, we have
\begin{eqnarray*}
    d(\Phi(t), \gamma_x(t)) & = & d\left( F^{nk}(\Phi(f^{-nk}t)), F^{nk}(\gamma_{f^{-nk}x}(f^{-nk}t)) \right) \\
    & \leq & \|F^{nk}_{f^{-nk}t}\| \cdot d\left( \Phi(f^{-nk}t), \gamma_{f^{-nk}x}(f^{-nk}t) \right).
\end{eqnarray*}
Note that $\gamma_x(x) = \Phi(x)$ for every $x \in M$, so the second term is estimated by
\begin{eqnarray*}
    d\left( \Phi(f^{-nk}t), \gamma_{f^{-nk}x}(f^{-nk}t) \right) & \leq & d\left( \Phi(f^{-nk}t), \Phi(f^{-nk}x) \right) + d\left( \gamma_{f^{-nk}x}(f^{-nk}x), \gamma_{f^{-nk}x}(f^{-nk}t) \right) \\
    & \leq & C_0 \cdot d(f^{-nk}x, f^{-nk}t)^{\theta} + L_0 \cdot d(f^{-nk}x, f^{-nk}t) \\
    & \leq & C \cdot d(f^{-nk}x, f^{-nk}t)^{\theta}
\end{eqnarray*}
for some constant $C > 0$ and sufficiently large $n$. By Lemma \ref{lemma bounded type} and the definition of $\alpha(k,x)$, we obtain
\begin{eqnarray*}
    d(\Phi(t), \gamma_x(t)) & \leq & C \liminf_{n \to \infty} \prod_{i=1}^{n} \|F^k_{f^{-ik}x}\|^{1+\epsilon} \cdot d\left( \Phi(f^{-nk}t), \gamma_{f^{-nk}x}(f^{-nk}t) \right) \\
    & \leq & C \liminf_{n \to \infty} \prod_{i=1}^{n} m\left( D f^k|_{E^u(f^{-ik}x)} \right)^{\alpha(k,x)(1+\epsilon) + \theta(\epsilon - 1)} = 0.
\end{eqnarray*}
Therefore, $\Delta_\delta(x) = 0$. 
\end{proof}
If $W^u$ is minimal, then $\Delta_\delta(x) = 0$ implies that $\Delta_\delta$ vanishes on $M$. 
\begin{lemma}\label{lemma delta}
If $W^u$ is minimal, then for any $\delta > 0$, either $\Delta_\delta(x) = 0$ for every $x \in M$, or $\Delta_\delta(x) > C_0 = C_0(\delta)$ is uniformly bounded away from $0$.  
\end{lemma}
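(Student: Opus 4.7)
The plan is to establish a clean dichotomy: the zero set $Z_\delta := \{x \in M : \Delta_\delta(x) = 0\}$ is either empty or equal to all of $M$. Since $\Delta_\delta$ is continuous on the compact manifold $M$ (which I would verify first), the lemma then follows immediately: in the first case $\inf_M \Delta_\delta > 0$ supplies the uniform lower bound $C_0(\delta)$, and in the second case $\Delta_\delta \equiv 0$.

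To establish continuity of $\Delta_\delta$, I observe that the map $(x,t) \mapsto d(\Phi(t),(h^u_F)_{xt}(\Phi(x)))$ is jointly continuous on the compact set $\{(x,t) : t \in W^u_\delta(x)\}$, using continuity of $\Phi$ together with the continuous dependence of the local unstable holonomies of $F$ on their basepoints. Since the disks $W^u_\delta(x)$ vary continuously with $x$ in the Hausdorff sense, a standard compactness argument then yields continuity of the sup $\Delta_\delta$.

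Next, suppose $\Delta_\delta(x_0)=0$, i.e.\ $\Phi(t)=\gamma_{x_0}(t)$ for every $t\in W^u_\delta(x_0)$. For any $y\in W^u_{\delta/2}(x_0)$ and any $t\in W^u_{\delta/2}(y)$, the triangle inequality in the leafwise metric forces $t\in W^u_\delta(x_0)$, and the cocycle property of unstable holonomies gives
\[
\gamma_y(t) \;=\; (h^u_F)_{yt}(\Phi(y)) \;=\; (h^u_F)_{yt}\circ(h^u_F)_{x_0 y}(\Phi(x_0)) \;=\; (h^u_F)_{x_0 t}(\Phi(x_0)) \;=\; \Phi(t),
\]
so $\Delta_{\delta/2}(y)=0$. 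Chaining this argument along $W^u(x_0)$ in steps of length at most $\delta/2$ propagates the vanishing to the entire leaf, and minimality of $W^u$ together with the continuity of $\Delta_{\delta/2}$ upgrades it to $\Delta_{\delta/2}\equiv 0$ on all of $M$.

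Finally, to promote $\Delta_{\delta/2}\equiv 0$ to $\Delta_\delta\equiv 0$: given any $x\in M$ and $t\in W^u_\delta(x)$, pick a leafwise midpoint $y$ with $d^u(x,y), d^u(y,t)\leq \delta/2$. Then $\Phi(y)=\gamma_x(y)$ and $\Phi(t)=\gamma_y(t)$, and one more application of the cocycle property gives $\Phi(t)=\gamma_x(t)$, whence $\Delta_\delta(x)=0$. The main (mild) technical point is the joint continuity of $(x,t)\mapsto d(\Phi(t),(h^u_F)_{xt}(\Phi(x)))$, which ultimately rests on continuous dependence of the local unstable holonomies of $F$ on their basepoints; this is standard for partially hyperbolic bundle maps under the fiber-domination hypothesis of Section~\ref{sec: non-frac tech ver}, and I would simply invoke it.
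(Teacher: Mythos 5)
Your continuity argument for $\Delta_\delta$ and the final ``midpoint'' promotion from $\Delta_{\delta/2}\equiv 0$ to $\Delta_\delta\equiv0$ via the cocycle property are both fine and match what the paper implicitly does. The gap is in the middle propagation step: the chaining along $W^u(x_0)$ does not work.

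Each application of your cocycle argument \emph{halves the radius}: from $\Delta_\delta(x_0)=0$ you obtain $\Delta_{\delta/2}(y)=0$ only for $y\in W^u_{\delta/2}(x_0)$. To take a second step of length $\delta/2$ based at such a $y$ you would need $\Delta_\delta(y)=0$, which you do not have. If instead you iterate with the radii you actually control ($\delta/2$, then $\delta/4$, then $\delta/8,\dots$), the total leafwise distance traversed is $\sum_k\delta/2^k=\delta$, so the whole chain stays inside $W^u_\delta(x_0)$. In fact nothing more can be extracted by holonomy alone: the hypothesis $\Delta_\delta(x_0)=0$ is exactly the statement that $\Phi$ equals the single holonomy section $\gamma_{x_0}$ on the disk $W^u_\delta(x_0)$, and the cocycle identity merely shows all the $\gamma_y$ for $y$ in that disk agree with $\gamma_{x_0}$ there; it carries no information about $\Phi$ outside $W^u_\delta(x_0)$, let alone on other leaves.

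The paper's proof resolves this by using the dynamics of $f$, not just the holonomy. Because $\Phi$ is $F$-invariant and the holonomy sections are $F$-equivariant, $\Delta_\delta(x_0)=0$ pushes forward to ``$\Phi=\gamma_{f^n x_0}$ on $f^n\bigl(W^u_\delta(x_0)\bigr)$'' for every $n\ge 0$. Since $f$ uniformly expands along $W^u$, these disks have inner leafwise radius tending to infinity. Minimality of $W^u_f$ then makes the union $\bigcup_{n}f^n\bigl(W^u_{\delta/6}(x_0)\bigr)$ dense in $M$ (for any nonempty open $U$, there is $R>0$ such that every $W^u$-ball of radius $R$ meets $U$; choose $n$ with $f^n(W^u_{\delta/6}(x_0))$ containing such a ball), and continuity of $\Delta_{\delta/3}$ forces it to vanish everywhere. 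Your outline omits this forward-iteration step, and without it the vanishing cannot leave the initial disk.
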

\begin{proof}
For any fixed $\delta$, the obstruction function $\Delta_\delta : M \to \mathbb{R}$ is continuous. If $\Delta_\delta(x)$ is not uniformly bounded away from $0$, by taking a limit, there exists some $x_0 \in M$ such that $\Delta_\delta(x_0) = 0$. In particular, $\Delta_{\delta/3}(t) = 0$ for any $t \in W^{u}_{\delta/6}(x_0)$. By taking an iteration, $\Delta_{\delta/3} = 0$ on the dense subset $\bigcup_{n \in \mathbb{N}} f^n(W^{u}_{\delta/6}(x_0))$, and hence $\Delta_{\delta/3} = 0$ on the entire manifold $M$. This implies that $\Phi|_{W^{u}_{\text{loc}}(x)} = \gamma_x$ and $\Delta_\delta(x) = 0$ for every $x \in M$. 
\end{proof}
\begin{proposition}
   If $\Phi$ is $\alpha+$-H\"older continuous and $W^u$ is minimal, then $\Phi$ is invariant under $h^u_F$ and is uniformly $C^r$ along $W^u_f$.
\end{proposition}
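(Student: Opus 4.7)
The plan is to chain together the two preceding lemmas in subsection 5.1 essentially verbatim, with the minimality hypothesis on $W^u_f$ converting a single point of vanishing obstruction into global vanishing. First, I unpack the hypothesis: $\Phi$ is $\theta$-Hölder for some $\theta > \alpha$. By the definition $\alpha = \inf_{x \in M, k \in \ZZ^+} \alpha(k,x)$, there must exist at least one pair $(x_0, k_0)$ for which $\theta > \alpha(k_0, x_0)$. Lemma \ref{lemma exce holder} applied at this pair then furnishes some $\delta > 0$ with $\Delta_\delta(x_0) = 0$.

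Next, since $W^u_f$ is assumed minimal, Lemma \ref{lemma delta} forces the dichotomy: either $\Delta_\delta$ is uniformly bounded away from $0$ on all of $M$, or it vanishes identically. The previous step rules out the first alternative at the point $x_0$, so $\Delta_\delta \equiv 0$ on $M$. Unwinding the definition of $\Delta_\delta$, this means that for every $x \in M$ and every $t \in W^u_\delta(x)$ one has $\Phi(t) = \gamma_x(t) = (h^u_F)_{xt}(\Phi(x))$. In other words, $\Phi$ coincides with the local unstable holonomy section $\gamma_x$ on each local unstable disk.

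To upgrade local holonomy invariance to global holonomy invariance along entire $W^u_f$-leaves, I would fix $x, y$ on the same $W^u_f$-leaf, cover a path from $x$ to $y$ by a finite chain of local unstable disks of radius $\delta$, and compose the local equalities $\Phi = \gamma_{\cdot}$ along the chain; the transitivity property $(h^u_F)_{yz} \circ (h^u_F)_{xy} = (h^u_F)_{xz}$ of the unstable holonomy of $W^u_F$ then yields $\Phi(y) = (h^u_F)_{xy}(\Phi(x))$. For the regularity claim, recall from the opening paragraph of subsection 5.1 that each $\gamma_x : W^u_{\mathrm{loc}}(x) \to N$ is uniformly $C^r$ in $x$, since $F$ is a $C^r$ bundle morphism and the unstable holonomy of a partially hyperbolic bundle map into fibers on which it contracts (relative to the base expansion) is uniformly $C^r$ by the $C^r$ section/graph transform theorem. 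Since $\Phi|_{W^u_\delta(x)} = \gamma_x$ with uniform $\delta$, we conclude that $\Phi$ is uniformly $C^r$ along $W^u_f$.

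The main obstacle I anticipate is essentially bookkeeping rather than substance: verifying that the uniform $C^r$ bounds on the local $\gamma_x$ really do pass to a uniform $C^r$ bound for $\Phi$ along $W^u_f$ (rather than merely a leafwise $C^r$ statement), and confirming that the chain-composition argument extending local holonomy invariance along a $W^u_f$-leaf does not lose the uniform radius $\delta$ (which it does not, since $\delta$ from Lemma \ref{lemma bounded type} is uniform in $x$). Once those are in place, the proof is complete and the proposition follows immediately from the lemmas. This proposition also gives the first conclusion of Theorem \ref{theorem cocycle pointwise}; the contrapositive, combined with the box-counting estimate in subsection 4.1, will in turn give the fractal alternative.
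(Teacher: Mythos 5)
Your proof is correct and follows exactly the paper's argument: invoke Lemma \ref{lemma exce holder} at a point where the pointwise H\"older exponent exceeds $\alpha(k_0,x_0)$ to get $\Delta_\delta(x_0)=0$, then use minimality via Lemma \ref{lemma delta} to upgrade this to $\Delta_\delta \equiv 0$, which identifies $\Phi$ with the uniformly $C^r$ holonomy sections $\gamma_x$ on each local unstable disk. The extra bookkeeping you anticipate (chaining local disks for global holonomy invariance and confirming uniformity of the $C^r$ bound) is routine and implicit in the paper's one-line conclusion.
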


\begin{proof}
   By Lemma \ref{lemma exce holder}, if $\Phi$ is $\alpha+$-H\"older continuous, then $\Delta_\delta(x) = 0$ for some $\delta > 0$ and $x \in M$. By Lemma \ref{lemma delta}, $\Delta_\delta = 0$ holds everywhere. Therefore, $\Phi$ is invariant under $h^u_F$ and is uniformly $C^r$ along $W^u_f$.
\end{proof}

\subsection{Non-invariance implies fractal graph}
Before going into the proof, we introduce a general criterion for the graph $\Phi$ to be fractal. For any \(\epsilon > 0\), \(\alpha \in (0,1)\), and \(x \in M\), define:
\[
\text{osc}(\Phi, x, \epsilon) := \sup_{y \in M, \, d(x, y) < \epsilon} d(\Phi(x), \Phi(y)), \quad h_{\alpha, \epsilon}(x) := \frac{\text{osc}(\Phi, x, \epsilon)}{\epsilon^\alpha}.
\]
\begin{lemma}\label{lemma dim criterion}
If there are constants \(\epsilon_0 > 0\) and \(c > 0\) such that \(h_{\alpha, \epsilon}(x) > c\) for some \(\alpha \in (0,1)\), every \(0 < \epsilon < \epsilon_0\), and every \(x \in M\), then 
\[
\underline{\dim}_B \mathrm{Graph}(\Phi) \geq \dim M + 1 - \alpha.
\]\end{lemma}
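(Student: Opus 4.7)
\medskip
\noindent\textbf{Proof proposal for Lemma \ref{lemma dim criterion}.}
The plan is to estimate the covering number $N(\mathrm{Graph}(\Phi),\epsilon)$ from below by combining a base covering count in $M$ with a forced fiber-direction spread from the oscillation hypothesis. Since the statement is local (box dimension only depends on a finite covering by charts and the bundle is $C^1$, hence locally trivial and bi-Lipschitz to a product), I would first reduce to the case where $\Phi$ is defined on a bounded open set $U\subset\RR^{\dim M}$ with values in $\RR^{\dim N-\dim M}$, equipped with the product Euclidean distance. The bound $h_{\alpha,\epsilon}(x)>c$ persists (with a new constant) after this bi-Lipschitz change of coordinates.

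Next, for each sufficiently small $\epsilon<\epsilon_0$, I would choose a maximal $2\epsilon$-separated subset $\{x_i\}_{i\in I}$ of $M$; by volume comparison on a compact manifold one has $\#I\geq c_1\epsilon^{-\dim M}$ for some constant $c_1>0$ depending only on $M$. The balls $B(x_i,\epsilon)$ are pairwise disjoint in the base. By hypothesis, for every $i$ there exists $y_i\in B(x_i,\epsilon)$ with $d(\Phi(x_i),\Phi(y_i))>c\epsilon^\alpha$. Consequently the image $\Phi(B(x_i,\epsilon))$ is a connected subset of the fiber chart containing two points at distance $>c\epsilon^\alpha$, so by connectedness and the intermediate value property applied to the distance from $\Phi(x_i)$, any $\epsilon$-cover of $\mathrm{Graph}(\Phi)\cap(B(x_i,\epsilon)\times\RR^{\dim N-\dim M})$ must use at least $\lfloor c\epsilon^{\alpha-1}/2\rfloor\geq c_2\epsilon^{\alpha-1}$ distinct $\epsilon$-balls, since each such ball has diameter $2\epsilon$ in the fiber factor. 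Because the base balls $B(x_i,\epsilon)$ are disjoint, these fiber-covers are disjoint as well, and summing over $i$ gives
\[
N(\mathrm{Graph}(\Phi),\epsilon)\geq c_1 c_2\,\epsilon^{-\dim M}\cdot\epsilon^{\alpha-1}=c_3\,\epsilon^{-(\dim M+1-\alpha)}.
\]
Taking $\liminf_{\epsilon\to 0}\log N(\mathrm{Graph}(\Phi),\epsilon)/(-\log\epsilon)$ yields $\underline{\dim}_B\mathrm{Graph}(\Phi)\geq\dim M+1-\alpha$.

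The only subtle step is the fiber counting: I need that an $\epsilon$-ball in the ambient space $N$ (restricted to a single chart of the bundle) cannot cover a piece of the graph whose projection to the fiber has diameter exceeding $2\epsilon$. This follows because the bundle projection is $1$-Lipschitz in the local trivialization, but I would write it carefully by fixing once and for all a finite atlas of the bundle with a uniform bi-Lipschitz constant. The rest is a direct volume-packing argument; no further global dynamical input is required, so this is where fractal geometry interfaces cleanly with the dynamical hypothesis $h_{\alpha,\epsilon}(x)>c$ established in the previous lemmas. The hypothesis $\alpha\in(0,1)$ is used only to guarantee $\epsilon^{\alpha-1}\to\infty$ so the fiber count is nontrivial.
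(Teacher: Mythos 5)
Your approach is essentially the paper's: multiply a base count of order $\epsilon^{-\dim M}$ (from a separated net in $M$) by a fiber count of order $\epsilon^{\alpha-1}$ (from the oscillation hypothesis via connectedness and the intermediate value theorem), then sum. The paper does this with a packing argument; you do it with a covering argument; for box dimension these are interchangeable, so that is not a real difference. There is, however, one gap in your accounting for the sum over $i$. You take $\{x_i\}$ to be a maximal $2\epsilon$-separated set, so the balls $B(x_i,\epsilon)$ in $M$ are pairwise disjoint, and you then assert that ``because the base balls $B(x_i,\epsilon)$ are disjoint, these fiber-covers are disjoint as well.'' This does not follow. An $\epsilon$-ball in $N$ has image of diameter at most $2\epsilon$ under the $1$-Lipschitz projection $\pi$, and a set of diameter $2\epsilon$ in $M$ can easily meet two disjoint balls $B(x_i,\epsilon)$, $B(x_j,\epsilon)$ when $d(x_i,x_j)$ is close to $2\epsilon$. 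So one ambient $\epsilon$-ball may be charged to several of your fiber-covers, and the naive sum over $i$ overcounts. The paper's proof sidesteps this by choosing a $2\epsilon$-packing $A_{2\epsilon}$ (so $d(x,y)\geq 4\epsilon$ for distinct $x,y$) and then checking explicitly that the $\epsilon$-separated sets $H_\epsilon(x)\subset\Phi|_{B(x,\epsilon)}$ over distinct base points are at ambient distance $>\epsilon$ from one another, so the union $\bigcup_x H_\epsilon(x)$ is itself $\epsilon$-separated. In your setup the fix is the same in spirit: either enlarge the base separation to $4\epsilon$ (this only changes the constant $c_1$, not the exponent), or keep $2\epsilon$ and note by a volume-doubling bound that each ambient $\epsilon$-ball can meet at most a constant $C(\dim M)$ of the $B(x_i,\epsilon)$, so the overcount is by a bounded factor. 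With either patch your estimate $N(\mathrm{Graph}(\Phi),\epsilon)\gtrsim\epsilon^{-(\dim M+1-\alpha)}$ and the resulting lower bound on $\underline{\dim}_B$ go through.
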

\begin{proof}
    For any \(0 < \epsilon < \epsilon_0\), let \(A_{2\epsilon} \subset M\) be a \(2\epsilon\)-packing subset, i.e., \(B(x, 2\epsilon) \cap B(y, 2\epsilon) = \emptyset\) for any \(x \neq y \in A_{2\epsilon}\). We may choose \(A_{2\epsilon}\) such that 
$\#A_{2\epsilon} \geq c_0 \epsilon^{-\dim M}$ for some constant \(c_0 > 0\). 

For any \(x \in A_{2\epsilon}\), since \(h_{\alpha, \epsilon}(x) > c\), we can find an \(\epsilon\)-packing subset \(H_\epsilon(x) \subset \Phi|_{B(x, \epsilon)}\) such that \(\#H_\epsilon(x) \geq c \epsilon^{\alpha - 1}/2\). Note that \(d(H_\epsilon(x), H_\epsilon(y)) > \epsilon\) for any \(x \neq y \in A_{2\epsilon}\). Thus, we have 
\[\underline \dim_B \mathrm{Graph}(\Phi) \geq \liminf_{\epsilon \to 0} \frac{\log \sum_{x \in A_{2\epsilon}} H_\epsilon(x)}{-\log \epsilon} \geq \dim M + 1 - \alpha.\]
\end{proof}
We now show that the graph of $\Phi$ is fractal if $\Phi$ is not invariant under $h^u_F$.
\begin{proposition}\label{prop low holder frac}
   If $\Phi$ is not $\alpha+$-H\"older continuous, then $\underline{\dim}_B \mathrm{Graph}(\Phi) \geq \dim M + 1 - A$.
\end{proposition}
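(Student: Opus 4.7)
The plan is to invoke Lemma \ref{lemma dim criterion} with exponent $A+\epsilon'$ for arbitrary small $\epsilon' > 0$: establishing a uniform lower bound $\mathrm{osc}(\Phi, z, \epsilon) \gtrsim \epsilon^{A+\epsilon'}$ valid at every $z \in M$ and every sufficiently small scale $\epsilon$ will give $\underline{\dim}_B \mathrm{Graph}(\Phi) \geq \dim M + 1 - (A+\epsilon')$, and letting $\epsilon' \downarrow 0$ yields the desired inequality.

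The first step is to rule out $h^u_F$-invariance. If $\Delta_{\delta_0} \equiv 0$ for some $\delta_0 > 0$, then $\Phi|_{W^u_{\delta_0}(x)} = \gamma_x$ for every $x$, so $\Phi$ would be uniformly Lipschitz (hence uniformly $\alpha+$-H\"older) along $W^u_f$ at small scales, contradicting the hypothesis. Hence $\Delta_\delta \not\equiv 0$ for every small $\delta$, and Lemma \ref{lemma delta} (which uses the minimality of $W^u_f$) then supplies a uniform lower bound $\Delta_\delta(x) > C_0(\delta) > 0$ for every $x \in M$.

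The heart of the proof is to transfer this one-scale estimate to every smaller scale via backward iteration. Fix $\epsilon' > 0$, pick $k \in \mathbb{Z}^+$ with $A_k < A + \epsilon'/2$, and choose $\delta_0$ small enough that Lemma \ref{lemma bounded type} applies. For arbitrary $z \in M$ and small $\epsilon > 0$, select $n$ so that $m(Df^{nk}|_{E^u(z)}) \asymp \delta_0/\epsilon$ (consecutive values of this quantity differ by a bounded multiplicative factor, so every small target scale is hit up to a uniform constant). Setting $x := f^{nk}z$, use $\Delta_{\delta_0}(x) > C_0$ to pick $t \in W^u_{\delta_0}(x)$ with $d(\Phi(t), \gamma_x(t)) > C_0/2$, and let $y := f^{-nk}t \in W^u(z)$; by Lemma \ref{lemma bounded type}, $d(z,y) \lesssim \epsilon$. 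The $F$-equivariance of $\Phi$ and $\gamma$ gives $F^{nk}\Phi(y) = \Phi(t)$ and $F^{nk}\gamma_z(y) = \gamma_x(t)$, so
\[
C_0/2 < d(\Phi(t), \gamma_x(t)) \leq \|F^{nk}_y\| \cdot d(\Phi(y), \gamma_z(y)).
\]
Combining the sub-multiplicativity $\|F^{nk}_y\| \leq \prod_{j} \|F^k_{f^{jk}y}\|$ with the pointwise bound $\|F^k_w\| \leq m(Df^k|_{E^u(w)})^{A_k}$ from the definition of $A_k$, and with Lemma \ref{lemma bounded type} to compare the products along the orbits of $y$ and $z$, one obtains $\|F^{nk}_y\| \lesssim (\delta_0/\epsilon)^{A+\epsilon'}$. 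Thus $d(\Phi(y), \gamma_z(y)) \gtrsim \epsilon^{A+\epsilon'}$, and since $\gamma_z$ is uniformly Lipschitz with $\gamma_z(z)=\Phi(z)$, the triangle inequality yields $d(\Phi(z), \Phi(y)) \gtrsim \epsilon^{A+\epsilon'} - O(\epsilon) \gtrsim \epsilon^{A+\epsilon'}$ for $\epsilon$ small (since $A+\epsilon' < 1$), completing the oscillation bound.

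The primary technical obstacle is coordinating the three ingredients—the pointwise bound on $\|F^k\|$ from the definition of $A_k$, the sub-multiplicativity under composition, and Lemma \ref{lemma bounded type}'s comparison between the orbits of $y$ and $z$—so that both the final exponent $A+\epsilon'$ and the implicit constants are truly uniform in $z \in M$ and in the target scale $\epsilon$. A secondary subtlety is that Lemma \ref{lemma dim criterion} demands the bound at every small $\epsilon$, not only at the discrete sequence $\delta_0/m(Df^{nk}|_{E^u(z)})$, which is handled by exploiting continuity and boundedness of $m(Df^k|_{E^u(\cdot)})$ on the compact manifold $M$ to interpolate between iterates.
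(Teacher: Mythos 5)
Your proposal is correct and follows essentially the same strategy as the paper's proof: use Lemma \ref{lemma delta} to obtain a uniform lower bound on $\Delta_\delta$, push this obstruction backward along the orbit via Lemma \ref{lemma bounded type} and the $A_k$-bound on $\|F^k\|$ to get an oscillation lower bound at every small scale and every base point, interpolate between the discrete scales $r_n$ using boundedness of consecutive ratios, and conclude via Lemma \ref{lemma dim criterion}. The only cosmetic difference is that the paper first reduces to $k=1$ (replacing $f$ by a finite iterate) and works with an auxiliary exponent $\theta=\frac{1+\epsilon}{1-\epsilon}A_1$, while you keep $k$ general and aim directly for the exponent $A+\epsilon'$; these bookkeeping choices are equivalent.
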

\begin{proof}
It suffices to prove that $\underline{\dim}_B \mathrm{Graph}(\Phi) \geq \dim M + 1 - A_k$ for every $k \in \mathbb{Z}^+$. By taking a finite iteration, it suffices to prove this for $A_1 < 1$. Choose $\epsilon > 0$ small enough such that $\theta := \frac{1 + \epsilon}{1 - \epsilon} A_1 < 1$, and choose $\delta > 0$ such that Lemma \ref{lemma bounded type} holds for $\epsilon$ and $k = 1$.

Since $\Phi$ is not invariant under $h^u_F$, by Lemma \ref{lemma delta}, we know that $\Delta_\delta \geq C_0 = C_0(\delta)$ is uniformly bounded away from $0$. For any $x \in M$ and $n \in \mathbb{Z}^+$, we can pick a point $t_n \in W^u_\text{loc}(x)$ such that $f^n t_n \in W^u_\delta(f^n x)$ and $d(\Phi(f^n t_n), \gamma_{f^n x}(f^n t_n)) \geq C_0 / 2$. By Lemma \ref{lemma bounded type} and the definition of $A_1$, we have
\[
d(\Phi(t_n), \gamma_x(t_n)) \geq \frac{C_0}{2} \|F^n_{t_n}\|^{-1} \geq \frac{C_0}{2} \prod_{i=0}^{n-1} \|F_{f^i x}\|^{-(1 + \epsilon)} \geq \frac{C_0}{2} \prod_{i=0}^{n-1} m(Df|_{E^u(f^i x)})^{-A_1(1 + \epsilon)}.
\]
Denote $r_n(x) := \delta \prod_{i=0}^{n-1} m(Df|_{E^u(f^i x)})^{ - 1+\epsilon}$, then $r_n(x) / r_{n+1}(x) < C_1$ is uniformly bounded for $x \in M, n \in \ZZ^+$ and some constant $C_1$. By Lemma \ref{lemma bounded type}, we also have $d(x, t_n) \leq r_n(x)$.

Since $\gamma_x$ is Lipschitz for some constant $L$, by triangle inequality: 
\[
d(\Phi(t_n), \Phi(x)) \geq d(\Phi(t_n), \gamma_x(t_n)) - d(\Phi(x), \gamma_x(x)) - d(\gamma_x(x), \gamma_x(t_n)) 
\]
\[
\geq \frac{C_0}{2} \prod_{i=0}^{n-1} m(Df|_{E^u(f^i x)})^{-A_1(1 + \epsilon)} - 0 - L r_n(x).
\]
This implies:
\[
\frac{d(\Phi(t_n), \Phi(x))}{r_n(x)^{\theta}} \geq \frac{C_0}{2 \delta^\theta} - L r_n(x)^{1 - \theta}.
\]
Since $r_n(x) / r_{n+1}(x) < C_1$ is uniformly bounded, for any $x \in M$ and $R > 0$ small, there exists a $n_R \in \mathbb{Z}^+$ such that $R/C_1 \leq r_{n_R}(x) \leq R$, and
\[
h_{\theta, R}(x)= \frac{\text{osc}(\Phi, x, R)}{R^\theta} \geq \frac{d(\Phi(t_{n_R}), \Phi(x))}{(C_1 r_{n_R}(x))^\theta} \geq C_1^{-\theta} \left( \frac{C_0}{2 \delta^\theta} - R^{1 - \theta} \right).
\]
Therefore, there exists a small constant $R_0 \in \mathbb{R}^+$ such that for every $x \in M$ and $R < R_0$,
\[
h_{\theta, R}(x) \geq \frac{C_0}{4 \delta^\theta C_1^\theta}.
\]
By Lemma \ref{lemma dim criterion}, we conclude that $\underline{\dim}_B \mathrm{Graph}(\Phi) \geq \dim M + 1 - \theta$. Since $\theta$ can be chosen arbitrarily close to $A_1$, the proof is complete.
\end{proof}

\section{Proof of the remaining results}\label{sec: proof remain}
In this section, we demonstrate how to apply Theorem \ref{theorem cocycle pointwise} to study the invariant distributions of partially hyperbolic Anosov diffeomorphisms.

\subsection{Proof of Theorem \ref{theorem s bunching}}
By the bunching assumption, $E^{cs}$ is $C^{1}$ \cite{has95}. Let $N = \mathrm{Gr}(\dim E^s, E^{cs})$ be the Grassmannian bundle consisting of subspaces of the same dimension as $E^s$ in $E^{cs}$. Then, $D f|_{E^{cs}}$ induces a natural bundle map $F: N \to N$, and $E^s$ gives an $F$-invariant section in $N$. A direct calculation illustrates:
\begin{lemma}\label{lemma ph check}
    $F$ is partially hyperbolic and $\alpha = \alpha_s$.
\end{lemma}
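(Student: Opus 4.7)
The plan is to reduce the lemma to the standard formula for the derivative of a linear map acting on the Grassmannian. Fix $k\in\ZZ^+$ and $x\in M$, and set $A:=Df^k|_{E^{cs}(x)}$. For any $W\in N_x=\mathrm{Gr}(\dim E^s, E^{cs}(x))$ the tangent space identifies with $\mathrm{Hom}(W, E^{cs}(x)/W)$, and under this identification $DF^k(W)$ sends $\phi$ to $\bar A\circ\phi\circ (A|_W)^{-1}$, where $\bar A$ is the quotient isomorphism $E^{cs}(x)/W\to E^{cs}(f^k x)/F^k(W)$. Hence the pointwise bound $\|DF^k(W)\|\le \|\bar A\|/m(A|_W)$ holds.

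Using the extremal property of the $Df$-invariant splitting $E^{cs}=E^s\oplus E^c$ (namely $E^s$ realizes the slowest rates and $E^c$ the fastest rates in $E^{cs}$), I would show that, up to a uniformly bounded multiplicative factor,
\[
\|F^k_x\|=\frac{\|Df^k|_{E^c(x)}\|}{m(Df^k|_{E^s(x)})},
\]
with the supremum effectively attained at $W=E^s(x)$. Indeed, the $Df$-invariance $AE^s=E^s$ lets one canonically identify the quotient $E^{cs}(x)/E^s(x)$ (up to a bounded angle between $E^c$ and $E^{cs}/E^s$) with $E^c(x)$, making $\bar A$ comparable to $Df^k|_{E^c(x)}$; meanwhile $m(A|_W)\ge m(Df^k|_{E^s(x)})$ for every $W$ and $\|\bar A\|\lesssim \|Df^k|_{E^c(x)}\|$ for every $W$, because $E^s$ (resp.\ $E^c$) is the direction realizing the extremum in $E^{cs}$.

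Given this identification, the three conclusions follow quickly. First, the first bunching hypothesis of Theorem \ref{theorem s bunching} reads exactly $\|F^k_x\|<m(Df^k|_{E^u(x)})$, giving the upper bound in the definition of partial hyperbolicity. Second, the lower bound $\|F^k_x\|>1$ comes from partial hyperbolicity of $f$ itself: some iterate satisfies $\|Df^k|_{E^s}\|<m(Df^k|_{E^c})$, hence $m(Df^k|_{E^s})<\|Df^k|_{E^c}\|$, which is what we need. Third, substituting $\log\|F^k_y\|=\log\|Df^k|_{E^c(y)}\|-\log m(Df^k|_{E^s(y)})+O(1)$ into the definition of $\alpha(k,x)$ matches it term-by-term with $\alpha_s(k,x)$ after taking the liminf, yielding $\alpha=\alpha_s$; positivity $\alpha_s>0$ is forced by partial hyperbolicity (the integrands are uniformly bounded below by a positive constant), and $\alpha_s<1$ by the bunching hypothesis (the integrands are strictly dominated by $\log m(Df^k|_{E^u})$).

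The main obstacle is justifying that the supremum $\|F^k_x\|=\sup_W\|DF^k(W)\|$ is truly governed by the extreme subspace $W=E^s(x)$, and pinning down the formula with respect to the intrinsic Grassmannian metric rather than the norm inherited from $\mathrm{Hom}(W,E^{cs}/W)$. However, any bounded multiplicative slack disappears in the logarithmic ratios defining $\alpha$ and is absorbed by the strict inequality after passing to a sufficiently large iterate of $k$, so only the asymptotic rates matter for both partial hyperbolicity and the identity $\alpha=\alpha_s$.
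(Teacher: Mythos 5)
Your plan is correct and is essentially the paper's proof. The paper works directly with the identity $\|F^n_x\| = \|Df^n|_{E^{cs}(x)}\|/m(Df^n|_{E^{cs}(x)})$ for the induced Grassmannian action---which is exactly the $\mathrm{Hom}(W,E^{cs}/W)$ computation you set up---then compares that ratio with $\|Df^n|_{E^c}\|/m(Df^n|_{E^s})$ and absorbs the multiplicative slack by passing to a large iterate via the chain-rule product, just as you outline; the ``obstacle'' you flag (intrinsic metric, supremum at $E^s$) is harmless since the angle between $E^s$ and $E^c$ is uniformly bounded below on the compact $M$, so the two quantities agree up to a constant that the iteration argument eats.
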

\begin{proof}
    We first check that $F$ is partially hyperbolic. By assumption, we have
    \[
    \frac{\|Df^k|_{E^{c}(x)}\|}{m(Df^k|_{E^{s}(x)})} <  m(Df^k|_{E^u(x)}), \quad \forall x \in M.
    \]
    By triangle inequality, for any $l \in \ZZ^+$ and $x \in M$,
    \[
    \frac{\|Df^{lk}|_{E^{c}(x)}\|}{m(Df^{lk}|_{E^{s}(x)})} \leq \prod_{i=0}^{l-1} \frac{\|Df^k|_{E^{c}(f^{ik}(x))}\|}{m(Df^k|_{E^{s}(f^{ik}(x))})} <\prod_{i=0}^{l-1} m(Df^k|_{E^u(f^{ik}(x))}) \leq \, m(Df^{lk}|_{E^u(x)}).
    \]
    For any $\epsilon > 0$, by choosing $l$ sufficiently large, we have:
    \[
    \|Df^{lk}|_{E^{cs}(x)}\| \leq \sqrt{1 + \epsilon} \, \|Df^{lk}|_{E^{c}(x)}\|, \quad m(Df^{lk}|_{E^{cs}(x)})^{-1} \leq \sqrt{1 + \epsilon} \, m(Df^{lk}|_{E^{s}(x)})^{-1}.
    \]
    Therefore, for $\epsilon$ sufficiently small, we have:
    \[
    \|F^{lk}(x)\| = \frac{\|Df^{lk}|_{E^{cs}(x)}\|}{m(Df^{lk}|_{E^{cs}(x)})} \leq (1 + \epsilon) \frac{\|Df^{lk}|_{E^{c}(x)}\|}{m(Df^{lk}|_{E^{s}(x)})} < m(Df^{lk}|_{E^u(x)}), \quad \forall x \in M,
    \]
    i.e., $F$ is partially hyperbolic.

    We then show that $\alpha = \alpha_s$. Note that
    \[
    \|F^{n}(x)\| = \frac{\|Df^{n}|_{E^{cs}(x)}\|}{m(Df^{n}|_{E^{cs}(x)})} \geq \frac{\|Df^{n}|_{E^{c}(x)}\|}{m(Df^{n}|_{E^{s}(x)})}, \quad \forall n \in \ZZ, \, x \in M,
    \]
    we have $\alpha(k, x) \geq \alpha_s(k, x)$ and $\alpha \geq \alpha_s$. Now, for any $\epsilon > 0$, there exists $k \in \ZZ^+$ and $x \in M$ such that
    \[
    \alpha_s(k, x) \leq \alpha_s + \epsilon.
    \]
    By definition, we have:
    \[
    \alpha_s(lk, x) \leq \alpha_s(k, x), \quad \forall l \in \ZZ^+.
    \]
    By the estimate above, for $l$ sufficiently large, we have:
    \[
    \|F^{lk}(x)\| \leq (1 + \epsilon) \frac{\|Df^{lk}|_{E^{c}(x)}\|}{m(Df^{lk}|_{E^{s}(x)})},
    \]
    which implies
    \[
    \alpha(lk, x) \leq (1 + \epsilon) \alpha_s(lk, x).
    \]
    Combining these facts together, we have
    \[
    \alpha \leq \alpha(lk, x) \leq (1 + \epsilon)(\alpha_s + \epsilon).
    \]
    This finishes the proof since $\epsilon$ can be taken arbitrarily small.
\end{proof}

Since $W^u$ is minimal, by Theorem \ref{theorem cocycle pointwise}, we obtain:
\begin{enumerate}
    \item If $E^s$ is uniformly $\alpha_s+$-H\"older continuous along $W^u$, then $E^s$ is invariant under $h^u_F$ and is uniformly $C^{1}$ along $W^u$, 
    \item If (1) does not hold, then $\underline \dim_{B} \mathrm{Graph}(E^s) > \dim M$.
\end{enumerate}

Since $W^s$ is a $C^2$ subfoliation of $W^{cs}$ \cite{hps77}, $E^s$ is uniformly $C^{1}$ along $W^{cs}$. By Journ\'{e}'s Lemma \cite{jou88}, we have:
\begin{corollary}\label{corollary es c1}
If $E^s$ is uniformly $\alpha_s+$-H\"older continuous along $W^u$, then $E^s$ is $C^{1}$.
\end{corollary}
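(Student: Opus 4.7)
The corollary follows by assembling the regularity of $E^s$ along two transverse foliations and invoking Journé's lemma, so my plan is essentially to package what has already been built. First, I would apply Theorem \ref{theorem s bunching}(1), which was just proved via Lemma \ref{lemma ph check} and Theorem \ref{theorem cocycle pointwise}: under the hypothesis that $E^s$ is uniformly $\alpha_s+$-Hölder continuous along $W^u$, it gives that $E^s$ is uniformly $C^1$ along the leaves of $W^u_f$. This disposes of the $W^u$-direction.

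Second, I would produce transverse regularity along $W^{cs}$. By Proposition \ref{prop ano} (the Hirsch--Pugh--Shub subfoliation property \cite{hps77}), since $f$ is at least $C^2$, the strong stable foliation $W^s$ is a $C^r$ subfoliation of $W^{cs}$ with $r \geq 2$. Working inside a single $W^{cs}$-leaf, this means that $W^s$ cuts out a $C^r$ foliation in any intrinsic $C^r$ chart of the leaf, and consequently its tangent distribution $E^s$ is $C^{r-1}$, hence at least $C^1$, along $W^{cs}$.

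Now the splitting $TM = E^u \oplus E^{cs}$ gives a pair of transverse continuous foliations $W^u$ and $W^{cs}$, each with uniformly $C^r$ leaves and whose tangent spaces span $TM$. Locally trivializing the Grassmannian bundle $G^{\dim E^s}(TM)$, I would view $E^s$ as a continuous vector-valued function whose restriction to each $W^u$-leaf and each $W^{cs}$-leaf is $C^1$. Since $1 \notin \{2,3,\dots\}$, Journé's lemma (Proposition \ref{prop journee}) applies and promotes this bi-directional $C^1$ regularity to $E^s \in C^1(M)$.

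I expect no serious obstacle: the genuine analytic work sits inside Theorem \ref{theorem s bunching}(1), and the remaining step is a formal application of two standard tools. The only point that deserves a sentence of care is the implication ``$W^s$ is a $C^r$ subfoliation of $W^{cs}$ $\Rightarrow$ $E^s$ is $C^1$ along $W^{cs}$''; this follows because the HPS subfoliation charts are $C^r$ with $r \geq 2$, so differentiating the $W^s$-tangent subspace within a $W^{cs}$-chart produces a $C^{r-1}$ distribution in that chart.
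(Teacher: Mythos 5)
Your proof matches the paper's: the paper obtains $C^1$ regularity of $E^s$ along $W^u$ from Theorem \ref{theorem cocycle pointwise} (via Lemma \ref{lemma ph check}), then uses the HPS fact that $W^s$ is a $C^2$ subfoliation of $W^{cs}$ to get $C^1$ regularity along $W^{cs}$, and concludes by Journ\'e's lemma. One small caveat: you should cite Theorem \ref{theorem cocycle pointwise} directly rather than ``Theorem \ref{theorem s bunching}(1)'' for the $W^u$-direction regularity, since Corollary \ref{corollary es c1} is itself the step in the paper that upgrades leafwise $C^1$ regularity to global $C^1$ regularity of $E^s$ claimed in Theorem \ref{theorem s bunching}(1), so the reference as you phrase it is mildly circular — but you clearly identify the correct underlying mechanism, so this is a matter of citation hygiene rather than a gap.
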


We now prove that if $E^s$ is $C^{1}$, then $E^s \oplus E^u$ is jointly integrable. Since $E^u$ is bunching, $E^u$ is also $C^{1}$ \cite{has95}, and the unstable holonomy $h^u_{xy}: W^{cs}_\text{loc}(x) \to W^{cs}_\text{loc}(y)$ of $W^u_f$ is uniformly $C^{1}$ \cite{psw97}. We denote its derivative by $D h^u_{xy}: E^{cs}(x) \to E^{cs}(y)$.\begin{lemma}\label{lemma lip holonomy}
   If $E^{cs}$ and $E^u$ are $C^{1}$, then $D h^u_{xy}$ is uniformly Lipschitz with respect to $x$ and $y$, i.e., there exists a constant $C$ such that $\|D h^u_{xy}(x) - \text{Id}\| \leq C d(x, y)$ for every $x \in M$ and $y \in W^u_\text{loc}(x)$ sufficiently close to $x$.
\end{lemma}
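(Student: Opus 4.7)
The strategy is to exploit the $f$-equivariance of the unstable holonomy to express $Dh^u_{xy}(x)$ as the limit of an explicit telescoping product of derivatives of $Df|_{E^{cs}}$, and then to control that product using the $C^1$ regularity of the bundle map together with the bunching condition inherent in the context. The desired Lipschitz estimate will fall out of a convergent geometric series whose rate is dictated by bunching.

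First, fix a $C^1$ local trivialization of $E^{cs}$ in a neighborhood of each relevant point, so that $Df|_{E^{cs}}$ is represented by a $C^1$ matrix-valued map $z \mapsto A(z)$, and the symbol $\text{Id}$ refers to the identity in this trivialization. After replacing $f$ by a sufficiently high iterate $f^k$, we may assume uniformly on $M$
\[
\|A(z)\| \le \lambda < 1, \qquad \|A^{-1}(z)\| \le \nu, \qquad m(Df|_{E^u(z)})^{-1} \le \mu < 1,
\]
and that the bunching inequality $\lambda \nu \mu < 1$ holds, since it is essentially a rewriting of $\|Df^k|_{E^c}\|/m(Df^k|_{E^s}) < m(Df^k|_{E^u})$, available in the ambient context of Theorem \ref{theorem s bunching}.

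Next, put $x_n := f^{-n}x$ and $y_n := f^{-n}y$. The equivariance $h^u_{xy} = f^n \circ h^u_{x_n, y_n} \circ f^{-n}$ on $W^{cs}_{\mathrm{loc}}(x)$, differentiated at $x$, yields
\[
Dh^u_{xy}(x) = Df^n(y_n) \circ Dh^u_{x_n, y_n}(x_n) \circ Df^{-n}(x).
\]
Since $d(x_n, y_n) \le C \mu^n d(x,y) \to 0$ and $h^u$ is $C^1$ by Proposition \ref{prop ano}, the middle factor converges to $\text{Id}$ in the trivialization as $n \to \infty$. Passing to the limit gives, in the trivialization,
\[
Dh^u_{xy}(x) = \lim_{n \to \infty} A_n(x,y), \quad A_n(x,y) := A(y_1) \cdots A(y_n) A^{-1}(x_n) \cdots A^{-1}(x_1),
\]
with $A_0(x,y) = \text{Id}$. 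The telescoping identity
\[
A_n - A_{n-1} = A(y_1) \cdots A(y_{n-1}) \bigl[ A(y_n) A^{-1}(x_n) - \text{Id} \bigr] A^{-1}(x_{n-1}) \cdots A^{-1}(x_1)
\]
gives outer norms at most $\lambda^{n-1}$ and $\nu^{n-1}$; the bracket is bounded by $C \nu \cdot d(x_n, y_n) \le C \mu^n d(x,y)$ because $A$ is $C^1$ and hence Lipschitz. Summing the geometric series $\sum_n (\lambda \nu \mu)^{n-1}$, which converges precisely by bunching, yields $\|Dh^u_{xy}(x) - \text{Id}\| \le C d(x,y)$.

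The main technical point is justifying that $Dh^u_{x_n, y_n}(x_n) \to \text{Id}$ uniformly in the trivialization as $n \to \infty$. This follows from continuity of $Dh^u$ on the set of pairs $(z, w)$ with $w \in W^u_{\mathrm{loc}}(z)$ (given by the $C^1$ property from Proposition \ref{prop ano}), together with $Dh^u_{zz}(z) = \text{Id}$ and compactness of $M$. A minor bookkeeping issue is that the bunching condition may be stated only for some iterate $f^k$; replacing $f$ with $f^k$ throughout the argument affects only multiplicative constants in the final estimate.
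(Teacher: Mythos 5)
Your approach is genuinely different from the paper's. The paper never touches the dynamics in this lemma: it works in local $C^1$ coordinates adapted to $W^u(x)$ and $W^{cs}(x)$, uses that the $C^1$ vector field spanning $E^u$ vanishes along $W^u(x)$ (so has slope $O(\|\boldsymbol{\zeta}\|)$ at distance $\|\boldsymbol{\zeta}\|$), and integrates a Gronwall inequality along the unstable segment from $x$ to $y$ to conclude $\|Dh^u_{xy}(x)-\mathrm{Id}\|\le 2Ld(x,y)$; the $C^1$ regularity of $E^{cs}$ is used only for the final change of basis. Your argument instead expresses $Dh^u_{xy}(x)$ via the $f$-equivariance identity and tries to realize it as the limit of the telescoping products $A_n(x,y) = A(y_1)\cdots A(y_n)A^{-1}(x_n)\cdots A^{-1}(x_1)$, i.e.\ as the \emph{linear} (dynamical) unstable holonomy of the cocycle $Df|_{E^{cs}}$.

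The gap is in the ``passing to the limit'' step. From the equivariance you get, for every $n$,
\[
Dh^u_{xy}(x) - A_n(x,y) = Df^n|_{E^{cs}(y_n)}\,\bigl[Dh^u_{x_n,y_n}(x_n) - \mathrm{Id}\bigr]\,Df^{-n}|_{E^{cs}(x)},
\]
whose norm is bounded only by $(\lambda\nu)^n\,\|Dh^u_{x_n,y_n}(x_n)-\mathrm{Id}\|$. Here $\lambda$ bounds the contraction on $E^{cs}$ (dominated by $E^c$) and $\nu$ bounds the expansion of $Df^{-1}$ on $E^{cs}$ (dominated by $E^s$), so in the partially hyperbolic Anosov setting $\lambda\nu \approx \lambda_c/\lambda_s > 1$. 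The soft fact that $Dh^u_{x_n,y_n}(x_n)\to\mathrm{Id}$ (continuity of the $C^1$ holonomy plus $Dh^u_{zz}=\mathrm{Id}$) gives no rate, and the rate you would need to kill the $(\lambda\nu)^n$ factor --- namely $\|Dh^u_{zw}(z)-\mathrm{Id}\| \lesssim d(z,w)$, so that the error is $(\lambda\nu\mu)^n\to 0$ --- is precisely the Lipschitz estimate you are trying to prove. As written, the limit identification $Dh^u_{xy}(x) = \lim_n A_n(x,y)$ is therefore circular. Your telescoping estimate does correctly show that the products $A_n$ converge to some $A_\infty$ with $\|A_\infty - \mathrm{Id}\|\le Cd(x,y)$; what is missing is the identification $A_\infty = Dh^u_{xy}(x)$, i.e.\ the (true but nontrivial) fact that under bunching the derivative of the geometric $W^u_f$-holonomy coincides with the dynamically defined linear unstable holonomy on $E^{cs}$. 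If you add a citation for that identity (it is essentially how $C^1$-ness of $h^u$ is established in \cite{psw97} via the $C^r$ section theorem of \cite{hps77}), your argument goes through; alternatively, the paper's Gronwall argument sidesteps the issue entirely by never invoking the dynamics.
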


\begin{proof}
To simplify the notation, we first prove the lemma for $\dim E^u = 1$. Since the lemma is local and invariant under smooth coordinate changes, we can reformulate the problem on the manifold $\mathbb{R}^{m} = \mathbb{R}^{m-1} \times \mathbb{R}$ with $x = (\boldsymbol{0}, 0)$, $W^u(x) = \{\boldsymbol{0}\} \times \mathbb{R}$, and $W^{cs}(x) = \mathbb{R}^{m-1} \times \{0\}$. For $y = (\boldsymbol{0}, \xi)$ and $z = (\boldsymbol{\zeta}, 0)$, with $\|\boldsymbol{\zeta}\| \ll \xi \ll 1$ sufficiently small (which will be determined later), denote $w := h^u_{xy}(z) = (\boldsymbol{\omega_1}, \omega_2)$.

We first estimate $\frac{\partial \boldsymbol{\omega_1}}{\partial \boldsymbol{\zeta}}$ at $\boldsymbol0$. Let 
$$z(t) = (\boldsymbol{\zeta}(t), t) := W^u_\text{loc}(z) \cap (\mathbb{R}^{m-1} \times \{t\}),$$
and consider the function $\varphi_z(t) := \|\boldsymbol{\zeta}(t) - \boldsymbol{\zeta}\|$. Clearly, $\varphi_z(0) = 0$. For any $p = (\boldsymbol{p}_1, p_2) \in \mathbb{R}^m$, let $(\boldsymbol{e}(p), 1)$ be a vector tangent to $E^u$. Since $E^u$ is $C^{1}$ and $\boldsymbol{e}|_{W^u(x)} = 0$, there exists a uniform constant $L$ such that 
$$\|\boldsymbol{e}(p)\| \leq L\|\boldsymbol{p}_1\|.$$
This implies
$$\left|\frac{d \varphi_z(t)}{d t}\right| \leq \|\boldsymbol{e}(z(t))\| \leq L\|\boldsymbol{\zeta}(t)\| \leq L(\|\boldsymbol{\zeta}\| + \varphi_z(t)).$$
By Gronwall's inequality, for $\xi < 1/(10L)$, we have 
$$\|\boldsymbol{\zeta}(t) - \boldsymbol{\zeta}\| = \varphi_z(t) \leq \|\boldsymbol{\zeta}\|(e^{Lt} - 1) \leq 2Lt\|\boldsymbol{\zeta}\|.$$
Since $\boldsymbol{\omega_1} = \boldsymbol{\zeta}(\omega_2)$ and $\omega_2 \to \xi$ as $\|\boldsymbol{\zeta}\| \to 0$, we obtain:
$$\left\|\frac{\partial \boldsymbol{\omega_1}}{\partial \boldsymbol{\zeta}}(\boldsymbol0) - \text{Id}\right\| = \left\|\frac{\partial (\boldsymbol{\omega_1} - \boldsymbol{\zeta})}{\partial \boldsymbol{\zeta}}(\boldsymbol0)\right\| = \limsup_{\boldsymbol{\zeta} \to \boldsymbol0} \frac{\|\boldsymbol{\zeta}(\omega_2) - \boldsymbol{\zeta}\|}{\|\boldsymbol{\zeta}-\boldsymbol{0}\|} \leq 2L\xi.$$

Note that $D h^u_{xy}(x)$ maps $E^{cs}(x)$ to $E^{cs}(y)$. Since $E^{cs}$ is $C^{1}$, the angle between $E^{cs}(y)$ and $\RR^{n-1} \times \{\xi\}$ is bounded by $L'\xi$ for some constant $L'$, and we have
$$\|D h^u_{xy}(x) - \text{Id}\| \leq  \sec(L'\xi)\left\|\frac{\partial \boldsymbol{\omega_1}}{\partial \boldsymbol{\zeta}}-\text{Id} \right\| \leq 2L\xi+O(\xi^2)< 3L \xi$$
for $\xi = d(x, y)$ small. This completes the proof for the case $\dim E^u = 1$.

For the general case, note that $h^u_{xy}$ is independent of the path in $W^{u}(x)$ joining $x$ and $y$. We can reformulate the problem on the manifold $\mathbb{R}^{cs+u} = \mathbb{R}^{cs+u-1} \times \mathbb{R}$ with $x = (\boldsymbol{0}, 0)$, $y = (\boldsymbol{0}, \xi)$, and $\{\boldsymbol{0}\} \times \mathbb{R} \subset W^u(x)$, $W^{cs}(x) \subset \mathbb{R}^{cs+u-1} \times \{0\}$. Then we can apply the arguments above on the $cs$-saturated set $S := \bigcup_{t \in \mathbb{R}} W^{cs}((\boldsymbol{0}, t)) \subset \mathbb R^{cs+u}$ to prove the lemma.
\end{proof}
\begin{proposition}\label{prop su int c1+}
If $E^s$ is uniformly $C^{1}$ along $W^u$, then $E^s \oplus E^u$ is jointly integrable. 
\end{proposition}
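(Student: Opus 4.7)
\medskip
\noindent\textbf{Proof plan.} The plan is to combine the holonomy-invariance half of Theorem \ref{theorem cocycle pointwise} with a direct identification of two holonomies on the Grassmannian fibers. Since $E^s$ is uniformly $C^1$ along $W^u$ and $\alpha_s<1$, the section $E^s:M\to N=\mathrm{Gr}(\dim E^s,E^{cs})$ is, a fortiori, uniformly $\alpha_s+$-H\"older along $W^u$. The bundle map $F$ induced by $Df|_{E^{cs}}$ on $N$ is partially hyperbolic with $\alpha=\alpha_s$ by Lemma \ref{lemma ph check}, and $W^u$ is minimal (cf.\ Proposition \ref{prop anosov t3} in the $\TT^3$ setting), so Theorem \ref{theorem cocycle pointwise}(1) applies and yields that $E^s$, viewed as an $F$-invariant section of $N$, is preserved by the abstract unstable holonomy $h^u_F$ of $F$.

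The main technical step is to identify $h^u_F$ with the pushforward $(Dh^u_{xy})_{*}:N_x\to N_y$ of the derivative of the $C^1$ unstable holonomy of $f$ (whose $C^1$ regularity is furnished by Proposition \ref{prop ano} under the bunching hypothesis). Differentiating the conjugacy $h^u_{xy}=f^n\circ h^u_{f^{-n}x,f^{-n}y}\circ f^{-n}$ yields the commutation $Dh^u_{xy}|_x\circ Df^n|_{f^{-n}x}=Df^n|_{f^{-n}y}\circ Dh^u_{f^{-n}x,f^{-n}y}|_{f^{-n}x}$, so for any $V\in N_x$ the backward $F$-orbits of $V$ and of $(Dh^u_{xy})_{*}V$ lie, fiberwise, in the images of each other under $(Dh^u_{f^{-n}x,f^{-n}y})_{*}$. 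The Lipschitz estimate $\|Dh^u_{f^{-n}x,f^{-n}y}|_{f^{-n}x}-\mathrm{Id}\|\le C\,d(f^{-n}x,f^{-n}y)$ of Lemma \ref{lemma lip holonomy} then shows that, in any $C^1$ trivialization of $N$, these two orbits converge to each other exponentially. Uniqueness of the $F$-local stable leaf through $V$ (which by definition determines $h^u_F$) pins down $(Dh^u_{xy})_{*}V=h^u_F(V)$ on every fiber.

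Combining the two inputs gives $Dh^u_{xy}(E^s(x))=E^s(y)$ for every pair $x,y$ on a common local $W^u$-leaf. Applying this at an arbitrary $z\in W^{cs}_{\mathrm{loc}}(x)$ --- and noting that near $z$ the maps $h^u_{z,h^u_{xy}(z)}$ and $h^u_{xy}$ agree, since both are the $W^u$-holonomy onto $W^{cs}_{\mathrm{loc}}(y)$ --- shows that the $C^1$ diffeomorphism $h^u_{xy}:W^{cs}_{\mathrm{loc}}(x)\to W^{cs}_{\mathrm{loc}}(y)$ carries $E^s|_{W^{cs}_{\mathrm{loc}}(x)}$ to $E^s|_{W^{cs}_{\mathrm{loc}}(y)}$ pointwise, hence sends $W^s$-leaves inside $W^{cs}_{\mathrm{loc}}(x)$ to $W^s$-leaves inside $W^{cs}_{\mathrm{loc}}(y)$. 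This is the standard characterization of joint integrability of $E^s\oplus E^u$. The hardest part will be the identification $h^u_F=(Dh^u)_{*}$ in the second step, where one must carefully separate the abstract fiber-dynamics of $F$ on $N$ from the derivative of the nonlinear base holonomy $h^u$; Lemma \ref{lemma lip holonomy} supplies exactly the quantitative bound the uniqueness argument needs, and once it is in hand the remaining steps are soft.
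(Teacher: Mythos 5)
Your proposal is correct and reaches the same conclusion, but by a genuinely different route than the paper. The paper does not invoke Theorem~\ref{theorem cocycle pointwise}(1) at all: it defines the comparison section $\tilde\gamma_x(y):=H^u_{xy}(\Phi^s(x))$ directly from the derivative $Dh^u$ of the base holonomy (via Lemma~\ref{lemma lip holonomy}), and then runs one backward iteration estimate --- using that $\Phi^s$ is uniformly Lipschitz along $W^u$ and that $\tilde\gamma_x$ is uniformly Lipschitz --- to show $d(\Phi^s(y),\tilde\gamma_x(y))=0$ outright, hence $E^s(y)=Dh^u_{xy}(E^s(x))$. This bypasses the abstract holonomy $h^u_F$ entirely. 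Your plan instead first extracts $h^u_F$-invariance from Theorem~\ref{theorem cocycle pointwise}(1) and then proves the identification $h^u_F=(Dh^u_{xy})_*$. Three remarks. First, invoking Theorem~\ref{theorem cocycle pointwise}(1) quietly imports a minimality hypothesis on $W^u$ that the proposition, and the paper's direct proof, do not need. Second, that invocation is also overkill: once $\Phi^s$ is uniformly $C^1$ (a fortiori Lipschitz) along $W^u$, its graph over each $W^u_f$-leaf is an $F$-invariant $C^1$ curve whose backward $F$-orbits contract at the full $W^u_f$-rate, so by the defining characterization of $W^u_F$ it already coincides with the $W^u_F$-leaf, i.e.\ $h^u_F$-invariance is immediate without the dichotomy theorem. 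Third, your identification $h^u_F=(Dh^u)_*$ is the genuine content of your route and your sketch of it is essentially sound: the equivariance $F^{-n}\circ(Dh^u_{xy})_*=(Dh^u_{f^{-n}x,f^{-n}y})_*\circ F^{-n}$ together with the Lipschitz bound $\|Dh^u_{f^{-n}x,f^{-n}y}-\mathrm{Id}\|\le C\,d(f^{-n}x,f^{-n}y)$ gives backward contraction at the unstable rate, and since $\|F_x\|<m(Df|_{E^u(x)})$ this rate is strictly faster than the fiber expansion, so uniqueness of the $W^u_F$-leaf pins it down; one just has to be a bit careful that the two backward orbits live in different fibers, so the comparison must be made via a $C^1$ local trivialization of $N$ and must account for the base displacement $d(f^{-n}x,f^{-n}y)$ as well as the fiberwise one. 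Net effect: you package the same estimates as the paper into a reusable lemma ($h^u_F=(Dh^u)_*$), which is more modular and conceptually transparent; the paper's direct argument is shorter, self-contained, and avoids the minimality hypothesis.
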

\begin{proof}
Denote $\Phi^s: M \to N$ as the $F$-invariant section induced by $E^s$. Since $E^s$ is uniformly $C^{1}$ along $W^u$, $\Phi^s$ is uniformly Lipschitz along $W^u$. Recall that $F$ is partially hyperbolic; there exist constants $\epsilon > 0$ small and $k \in \mathbb{Z}^+$ such that
$$\|F^k_x\| \leq m (D f^k|_{E^u(x)})^{1-2\epsilon}, \quad \forall x \in M.$$
Choose $\delta > 0$ such that Lemma \ref{lemma bounded type} holds. For any $x \in M$ and $y \in W^{u}_\delta(x)$, the derivative of the unstable holonomy $D h^u_{xy}: E^{cs}(x) \to E^{cs}(y)$ induces a map $H^u_{xy}: N_x \to N_y$. Define the map $\tilde{\gamma}_x: W^u_\text{loc}(x) \to N$ by $\tilde{\gamma}_x(y) := H^u_{xy}(\Phi^s(x))$. By Lemma \ref{lemma lip holonomy}, $\tilde{\gamma}_x$ is uniformly Lipschitz.

Now, for $x \in M$ and $y \in W^u_\text{loc}(x)$, similar to the estimation in Lemma \ref{lemma exce holder}, we have:
\begin{align*}
d(\Phi^s(y), \tilde{\gamma}_x(y)) & = d(F^{nk}(\Phi^s(f^{-nk}y)), F^{nk}(\tilde{\gamma}_{f^{-nk}x}(f^{-nk}y))) \\
& \leq \|F^{nk}_{f^{-nk}y}\| \cdot d(\Phi^s(f^{-nk}y), \tilde{\gamma}_{f^{-nk}x}(f^{-nk}y)) \\
& \leq \|F^{nk}_{f^{-nk}y}\| \cdot \big(d(\Phi^s(f^{-nk}y), \Phi^s(f^{-nk}x)) + d(\tilde{\gamma}_{f^{-nk}x}(f^{-nk}x), \tilde{\gamma}_{f^{-nk}x}(f^{-nk}y))\big) \\
& \leq C \|F^{nk}_{f^{-nk}y}\| \cdot d(f^{-nk}x, f^{-nk}y),
\end{align*}
for some constant $C>0$. By Lemma \ref{lemma bounded type} and the partial hyperbolicity assumption of $F$, we know
$$\|F^{nk}_{f^{-nk}y}\| \leq \prod_{i=1}^n \|F^k_{f^{-ik}y}\| \leq \prod_{i=1}^n \|F^k_{f^{-ik}x}\|^{1+\epsilon} \leq \prod_{i=1}^n m (D f^k|_{E^u(f^{-ik}x)})^{(1-2\epsilon)(1+\epsilon)},$$
and
$$d(f^{-nk}x, f^{-nk}y) \leq \delta \prod_{i=1}^n m (D f^k|_{E^u(f^{-ik}x)})^{-1+\epsilon}.$$
Therefore, combining these estimates:
$$d(\Phi^s(y), \tilde{\gamma}_x(y)) \leq C\delta \prod_{i=1}^n m (D f^k|_{E^u(f^{-ik}x)})^{-2\epsilon^2} \to 0 \quad \text{as } n \to \infty.$$
This implies $\Phi^s(y) = \tilde{\gamma}_x(y) = H^u_{xy}(\Phi^s(x))$. In other words, $E^s(y) = D h^u_{xy}(E^s(x))$, so $E^s$ is invariant under $D h^u$. Thus, $h^u_{xy}(W^s_\text{loc}(x))$ is a $C^{1}$ submanifold tangent to $E^s$ everywhere. By the unique integrability of $E^s$, $h^u_{xy}(W^s_\text{loc}(x))$ is a local strong stable leaf. Hence, $h^u$ maps $W^s$ leaves to $W^s$ leaves, and $E^s \oplus E^u$ is jointly integrable.
\end{proof}
\subsection{Proof of Theorem \ref{theorem 3dim s} and Corollary \ref{coro: local qualitative dich}}
Let $f \in \Diff^{2}(\mathbb{T}^3)$ be $C^1$-close to a volume-preserving diffeomorphism $g$, then $f$ satisfies all the conditions in Theorem \ref{theorem s bunching}. Therefore, by applying Theorem \ref{theorem s bunching} to $f$, along with Proposition \ref{prop 3dim pinch}, the result in Theorem \ref{theorem 3dim s} follows immediately. Moreover, since $E^s \oplus E^u$ is jointly integrable if and only if if $f$ has the same $c$-periodic data as its linearization $L$ \cite{gs20}, we see that such $f$ form a codimension-\( \infty \) subset in $\Diff_{\mathrm{vol}}^{2}(\mathbb{T}^3)$. 
\subsection{Proof of Theorem \ref{theorem c bunching ss}}
By the bunching assumption, $E^{cs}$ is $C^{1}$ \cite{has95}. Let $N = \mathrm{Gr}(\dim E^c, E^{cs})$ denote the Grassmannian bundle consisting of subspaces of $E^{cs}$ with the same dimension as $E^c$. The derivative $D f^{-1}|_{E^{cs}}$ induces a natural bundle map $F: N \to N$ over $f^{-1}:M \to M$. Similar to Lemma \ref{lemma ph check}, we can check that $F$ is partially hyperbolic and $\alpha=\alpha_c$ in this case. By Applying Theorem \ref{theorem cocycle pointwise} to the cocycle defined by $F$, we conclude the proof.

\subsection{Proof of Theorem \ref{theorem 3dim c}}
By Theorem \ref{theorem c bunching ss} and Proposition \ref{prop 3dim pinch}, we establish the following two cases:
\begin{enumerate}
    \item If $E^c$ is uniformly $\theta_c+$-H\"older continuous along $W^s$, then $E^c$ is invariant under $h^s_F$ and is uniformly $C^{1}$ along $W^s$.
    \item If (1) does not hold, then $\underline{\dim}_{B} \mathrm{Graph}(E^c) > \dim M$.
\end{enumerate}
We then prove that $W^c$ is a $C^{1}$ foliation when $E^c$ is uniformly $C^{1}$ along $W^s$. 

Before proceeding to the proof, we state the following lemma from real analysis, which will be utilized in the argument:
\begin{lemma}\label{lemma lip c1}
    Let $U \subset \mathbb{R}$ be an open set, and let $f: U \to \mathbb{R}$ be a Lipschitz continuous function. Then $f$ is differentiable at Lebesgue-a.e. $ x \in U$. Moreover, if $Df$ is H\"older continuous on a set of full Lebesgue measure, then $f$ is $C^{1+}$. 
\end{lemma}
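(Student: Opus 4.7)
\medskip
\noindent\textbf{Proof proposal for Lemma \ref{lemma lip c1}.}
The plan is to treat the two assertions separately, using only classical one-dimensional real analysis.

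For the first assertion, I would invoke the standard fact that every Lipschitz function $f:U\to\RR$ on an open subset of $\RR$ is absolutely continuous on each compact subinterval, and that absolutely continuous functions are differentiable at Lebesgue-a.e.\ point of their domain (this is the one-dimensional Lebesgue differentiation theorem, and is the $\RR^1$ case of Rademacher's theorem). Let $A\subset U$ denote the full-measure set on which $Df$ exists; since $f$ is Lipschitz with some constant $L$, one automatically has $|Df|\leq L$ on $A$.

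For the second assertion, suppose there is a (possibly smaller) full-measure set $A'\subset A$ on which $Df$ is $\alpha$-H\"older for some $\alpha\in(0,1)$. The key step is to upgrade this to continuity on all of $U$: since $A'$ is dense in $U$ and $Df|_{A'}$ is uniformly continuous on bounded subsets, it extends uniquely to an $\alpha$-H\"older function $g:U\to\RR$. Next, because $f$ is Lipschitz and hence absolutely continuous, for every $x_0,x\in U$ lying in a common subinterval of $U$ one has
\[
f(x)-f(x_0)=\int_{x_0}^{x}Df(t)\,dt=\int_{x_0}^{x}g(t)\,dt,
\]
where the first equality is the fundamental theorem of calculus for absolutely continuous functions and the second uses that $\{t:Df(t)\neq g(t)\}\subset U\setminus A'$ has Lebesgue measure zero. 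Since $g$ is continuous, the fundamental theorem of calculus now runs in the other direction and shows that $f$ is differentiable everywhere on $U$ with $f'=g$. Because $g$ is $\alpha$-H\"older, $f\in C^{1+\alpha}\subset C^{1+}$.

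The main, and really the only, subtle point is the extension step: one must verify that H\"older continuity on a full-measure (dense) set extends to genuine H\"older continuity on $U$, and that after extension the resulting function agrees a.e.\ with the a.e.-defined derivative $Df$. Both facts are immediate from uniform continuity and from the null set on which $Df$ and $g$ could disagree being irrelevant to the Lebesgue integral. All other steps are classical applications of absolute continuity and the fundamental theorem of calculus, so I do not expect any further obstacles.
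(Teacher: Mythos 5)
The paper does not actually prove Lemma \ref{lemma lip c1}; it is stated as ``the following lemma from real analysis'' and used without further justification, so there is no paper argument to compare against. Your proof is correct and is the natural one: Lipschitz $\Rightarrow$ absolutely continuous $\Rightarrow$ differentiable a.e.\ (with $|Df|$ bounded by the Lipschitz constant); then in the second part you extend $Df|_{A'}$ by density and uniform ($\alpha$-H\"older) continuity to a function $g$ that is H\"older on $U$, observe that $Df=g$ a.e., and conclude via the fundamental theorem of calculus for absolutely continuous functions that $f'=g$ everywhere, hence $f\in C^{1+\alpha}\subset C^{1+}$. The one point worth stating carefully — that the H\"older extension exists with the same exponent and constant on each convex bounded piece of $U$, and that modifying $Df$ on the null set $A\setminus A'$ does not change the integral — you have identified and handled correctly, so the argument is complete.
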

\begin{proposition}\label{prop ec imply wc}
If $E^c$ is uniformly $C^{1}$ along $W^s$, then $W^c$ is a $C^{1}$ foliation.
\end{proposition}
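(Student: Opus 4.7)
The plan is to show that the distribution $E^c$ is $C^1$ on all of $\TT^3$; from this, the unique integrability of $E^c$ gives at once that $W^c$ is a $C^1$ foliation. The available tools are the $C^1$ foliation $W^{cs}$ with $C^r$ leaves (Proposition~\ref{prop anosov t3}), the smooth subfoliation $W^s$ of $W^{cs}$ (Proposition~\ref{prop ano}), the $C^1$ leaves of $W^c$, and the $h^s$-invariance of $E^c$ which follows from Theorem~\ref{theorem cocycle pointwise} exactly as in the proof of Theorem~\ref{theorem c bunching ss}.

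First, I would restrict attention to a single $W^{cs}$-leaf $L$, which is a $C^r$ surface carrying the transverse pair of one-dimensional foliations $W^s|_L$ and $W^c|_L$. By hypothesis, $E^c|_L$ is $C^1$ along $W^s|_L$, so by classical ODE theory on the $C^1$ dependence of flows on initial conditions, the $W^s$-transverse holonomy of the uniquely integrable direction $E^c|_L$ is $C^1$. Together with the $C^1$ leaves of $W^c$, this shows $W^c|_L$ is a $C^1$ foliation of $L$, equivalently that $E^c|_L$ is a $C^1$ section over $L$.

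Next, to globalize to $\TT^3$, I would use the invariance of $E^c$ under the stable holonomy $h^s$: since $E^c$ is in particular $\alpha_c+$-H\"older along $W^s$, Theorem~\ref{theorem cocycle pointwise}, applied to the Grassmannian bundle map induced by $Df^{-1}|_{E^{cs}}$, yields this invariance. The invariance transports $E^c$ from one $W^{cs}$-leaf to another along $W^s$-saturations; combined with the $C^1$ structure on each $W^{cs}$-leaf and the absolute continuity of $W^s$, this should yield Lipschitz regularity of $E^c$ in the $W^u$ direction. Lemma~\ref{lemma lip c1} then upgrades this to $C^1$ regularity once the derivative is checked to be H\"older on a full Lebesgue-measure set (provided by the intra-leaf $C^1$ structure together with $h^s$-transport), and a final application of Journ\'e's lemma (Proposition~\ref{prop journee}) to the transverse pair $W^{cs}$ and $W^u$ produces the desired $C^1$ regularity of $E^c$ on $\TT^3$.

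The main obstacle is exactly the transverse direction across $W^{cs}$-leaves, since we have no direct assumption on $E^c$ along $W^u$. Bridging this gap relies on the interplay between holonomy invariance under $h^s$, the absolute continuity of $W^s$, and the bootstrapping from Lipschitz to $C^1$ provided by Lemma~\ref{lemma lip c1}; this is the delicate technical step of the proof. Once $E^c$ is shown to be a $C^1$ distribution on $\TT^3$, its unique integrability yields a $C^1$ foliation $W^c$, completing the argument.
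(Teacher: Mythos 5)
Your plan aims to show that $E^c$ is a $C^1$ distribution on $\TT^3$, but this overshoots the Proposition and is false in its stated generality. Being tangent to a $C^1$ distribution (characterization (a) in Section~\ref{sec: prel}) strictly implies being a $C^1$ foliation (characterization (b)), and the Proposition asserts only (b). Indeed, if $E^c$ were $C^1$, its integral curves --- the $W^c$-leaves --- would be $C^2$; but Proposition~\ref{prop anosov t3} yields $C^r$ center leaves only under an $r$-domination assumption, and in general the leaves are merely $C^1$ and ``may fail to be $C^2$ even when $r = \infty$.'' Your assertion that $W^c|_L$ being a $C^1$ foliation is ``equivalently'' $E^c|_L$ being a $C^1$ section is exactly the false implication (b) $\Rightarrow$ (a), and the later attempt to globalize $E^c$ to a $C^1$ distribution on all of $\TT^3$ is trying to prove something that need not hold. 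The globalization mechanism you offer is also not yet an argument: absolute continuity of the stable holonomy governs transverse measures, not pointwise regularity of a distribution in the $W^u$-direction, and Lemma~\ref{lemma lip c1} can only be invoked once Lipschitzness and almost-everywhere H\"older differentiability along a leaf are established, neither of which you produce for the $W^u$-direction.

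The paper circumvents all of this by working with the \emph{center holonomy} $h^c$ rather than the tangent distribution. Dynamical coherence gives $C^1$ center leaves; the $C^1$ center-stable holonomy $h^{cs}$ already makes $h^c$ uniformly $C^1$ in the $W^u$-direction for free; and by Journ\'e's lemma it then suffices to show that $h^c$ is uniformly $C^{1+}$ along $W^s$. This is done inside each $W^{cs}$-leaf via a Gronwall-type estimate (Lipschitzness of $h^c$), followed by a bounded-distortion argument along stable leaves (H\"older continuity of the a.e.\ derivative), then Lemma~\ref{lemma lip c1}. The result is characterization (c), hence (b), without $E^c$ being $C^1$ in any direction other than $W^s$. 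Your intra-leaf idea of integrating $E^c$ and invoking $C^1$-dependence on initial conditions is in the same spirit as the Gronwall step, but be aware that the classical ODE theorem requires joint continuity of $\partial_s E^c$, which ``uniformly $C^1$ along $W^s$'' does not automatically supply; the bounded-distortion detour is exactly how the paper makes this step rigorous.
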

\begin{proof}
Since $f$ is dynamically coherent \cite{bbi09,por15} and the center-stable holonomy $h^{cs}$ is uniformly $C^{1}$ \cite{psw97}, the leaves of $W^c$ are $C^{1}$ and the center holonomy is uniformly $C^{1}$ along $W^u$. By Journ\'{e}'s lemma \cite{jou88}, it suffices to prove that the center holonomy along $W^s$, $h^c_{xy}: W^s_\text{loc}(x) \to W^s_\text{loc}(y)$, is uniformly $C^{1}$.

We first prove that $h^c$ is uniformly Lipschitz. The proof is similar to that of Lemma \ref{lemma lip holonomy}. Since $W^s$ is a $C^2$ subfoliation of $W^{cs}$ \cite{hps77}, we can reduce the problem to $\mathbb{R}^2$ via a $C^2$ coordinate change such that $x = (0, 0)$, $W^c(x)=\{0\} \times \RR$ and the stable leaves are parallel to the $x$-axis. For $p \in \mathbb{R}^2$, let $ (e(p), 1)$ be a vector tangent to $E^c(p)$. By assumption, $e(p)$ is uniformly Lipschitz along $W^s$. For $z = (\zeta, 0) \in W^s_\text{loc}(x)$ and $t \in \RR$, let $y(t) := (0, t)$ and $z(t) := h^c_{xy(t)}(z) = (\zeta(t), t)$. Then we have $\zeta(0) = \zeta$ and
\[
\left| \frac{d \zeta(t)}{d t} \right| = |e(z(t)) - e(y(t))| \leq L |\zeta(t)|
\]
for some constant $L$. By Gronwall's inequality, for $0 \leq |t| < 1/(10L)$, we have:
\[
\|h^c_{xy(t)}(z) - y(t)\| = |\zeta(t)| \leq e^{L|t|} |\zeta| \leq (1 + 2L|t|) |\zeta|.
\]
Thus, $h^c$ is uniformly Lipschitz and is is differentiable almost everywhere. Similarly, we have $ |\zeta(t)| \geq (1-2L|t|)|\xi|$ and if $h^c_{xy(t)}$ is differentiable at $x$, then
$$|D h^c_{xy(t)}(x)-1 | = \left|\lim_{\zeta \to 0}\frac{\|h^c_{xy(t)}(z) - y(t)\|}{|\zeta|}-1\right| \leq  2L|t|=2Ld(x,y(t)).$$
We now show the derivative of $h^c$ is H\"older continuous. For $z_1, z_2 \in W^s_\text{loc}(x)$ such that $h^c_{xy}$ is differentiable at both points, let $w_1 = h^c_{xy}(z_1)$ and $w_2 = h^c_{xy}(z_2)$. Differentiating the relation 
\[
h^c_{xy} = f^{-n} \circ h^c_{f^n x, f^n y} \circ f^n,
\]
we obtain:
\[
D h^c_{xy}(z_i) = (D f^n|_{E^s(w_i)})^{-1} \cdot D h^c_{f^n x, f^n y}(f^n z_i) \cdot D f^n|_{E^s(z_i)}, \quad i = 1, 2.
\]
By the classical bounded distortion estimates (e.g., \cite{kh95}), for $z_1$ and $z_2$ sufficiently close:
\[
\lim_{n \to \infty} \frac{|D f^n|_{E^s(z_2)}|}{|D f^n|_{E^s(z_1)}|} \leq e^{Cd(z_1, z_2)^\beta} \leq 1 + 2Cd(z_1, z_2)^\beta,
\]
where $\beta$ is the H\"older exponent of $E^s$, and $C$ is a uniform constant. Since $|D h^c_{xy}(x) - 1| < 2L d(x, y)$ for $y$ near $x$, we have:
\begin{align*}
\left| \frac{D h^c_{xy}(z_2)}{D h^c_{xy}(z_1)} \right| & = \lim_{n \to \infty} \left| \frac{(D f^n|_{E^s(w_2)})^{-1} \cdot D h^c_{f^n x, f^n y}(f^n z_2) \cdot D f^n|_{E^s(z_2)}}{(D f^n|_{E^s(w_1)})^{-1} \cdot D h^c_{f^n x, f^n y}(f^n z_1) \cdot D f^n|_{E^s(z_1)}} \right|  \\
& \leq  (1 + 2Cd(w_1, w_2)^\beta)(1 + 2Cd(z_1, z_2)^\beta).
\end{align*}
This implies that $D h^c_{xy}$ is H\"older continuous on a subset of full Lebesgue measure in $W^s_\text{loc}(x)$. By Lemma \ref{lemma lip c1}, $h^c$ is $C^{1+}$.
\end{proof}

\subsection{Proof of Corollary \ref{theorem 3dim c local}}
We first show that \(E^c\) is uniformly \(C^{1}\) along \(W^u\).
\begin{lemma}\label{lemma c along u}
   \(E^c\) is uniformly \(C^{1}\) along \(W^u\). 
\end{lemma}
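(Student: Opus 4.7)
The plan is to invoke the non-fractal invariance principle (Theorem \ref{theorem cocycle pointwise}) on a Grassmannian bundle over $f$, using the very same bundle-and-cocycle construction that underlies the proof of Theorem \ref{theorem s bunching}, but with $\Phi = E^c$ (instead of $E^s$) as the $F$-invariant section. Since $f \in \Diff^2(\TT^3)$ is partially hyperbolic Anosov with contracting $E^c$, the distribution $E^{cs}$ is automatically $C^1$ by Proposition \ref{prop anosov t3}. Consider the $C^1$ Grassmannian bundle $N := \mathrm{Gr}(1, E^{cs})$ over $\TT^3$, whose fibers are copies of $\RR P^1$, and let $F: N \to N$ be the bundle morphism over $f$ induced by $Df|_{E^{cs}}$. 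Then $\Phi := E^c$ is a continuous $F$-invariant section.

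To verify that $F$ is partially hyperbolic in the sense of Section \ref{sec: non-frac easy}, note that on each one-dimensional fiber $\RR P^1$ the operator norm of $DF$ is comparable to $\|Df|_{E^c(x)}\| \cdot m(Df|_{E^s(x)})^{-1}$, while the lower bound $\|F_x\| > 1$ is automatic from partial hyperbolicity of $f$ (since $E^s$ is more strongly contracted than $E^c$). For the upper bound, let $L$ be the linear partially hyperbolic Anosov automorphism to which $f$ is $C^1$-close, with eigenvalues $\mu_s, \mu_c, \mu_u$ satisfying $|\mu_s\mu_c\mu_u| = 1$ and $|\mu_c| < 1$. Then
\[
\frac{|\mu_c|}{|\mu_s|} \;=\; \mu_c^2 \, |\mu_u| \;<\; |\mu_u|,
\]
and by $C^1$-closeness (possibly after passing to an iterate $k$) the pointwise inequality $\|Df^k|_{E^c(x)}\| \cdot m(Df^k|_{E^s(x)})^{-1} < m(Df^k|_{E^u(x)})$ holds uniformly in $x \in \TT^3$. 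Hence $F$ is partially hyperbolic.

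Since $W^u_f$ is minimal by Proposition \ref{prop anosov t3}, Theorem \ref{theorem cocycle pointwise} applied to $\Phi = E^c$ yields the dichotomy: either $\underline{\dim}_B \mathrm{Graph}(E^c) > \dim \TT^3$ inside $N$, or $E^c$ is uniformly $C^1$ along $W^u_f$ and invariant under $h^u_F$. Because $N$ embeds as a $C^1$ subbundle of $\mathrm{Gr}(1, TM)$, the first alternative implies that $E^c$ has a fractal graph in the sense of Definition \ref{def: frct grph}. Within the proof of Corollary \ref{theorem 3dim c local} we may assume $E^c$ has no fractal graph (else the corollary's conclusion is immediate), which rules out the first alternative and delivers the claim. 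The most delicate step is verifying the strict bunching inequality needed for partial hyperbolicity of $F$; this is precisely where the assumption that $f$ is $C^1$-close to $L$ with $|\mu_c| < 1$ is essential, in contrast with the $W^s$-direction treated by Theorem \ref{theorem 3dim c}, where the analogous bunching for $F$ over $f^{-1}$ holds unconditionally.
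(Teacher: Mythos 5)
Your proof is correct as far as it goes, but it takes a genuinely different route and establishes a strictly weaker, conditional version of the lemma. You set up the same bundle $N = \mathrm{Gr}(1, E^{cs}) \cong \PP(E^{cs})$ and the same $F$ induced by $Df|_{E^{cs}}$, and your verification that $F$ is partially hyperbolic (via $|\mu_c/\mu_s| = |\mu_c|^2|\mu_u| < |\mu_u|$ for $L$ and $C^1$-closeness) is sound. However, you then feed $\Phi^c = E^c$ into Theorem~\ref{theorem cocycle pointwise}, which delivers $h^u_F$-invariance only under the hypothesis that $\mathrm{Graph}(E^c)$ is not fractal, and you handle this by deferring to the ambient non-fractality assumption in the proof of Theorem~\ref{theorem 3dim c local}.

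The paper argues more directly and gets the unconditional statement. The observation you miss is the asymmetric role of $E^c$ and $E^s$ as sections of $\PP(E^{cs})$: since $E^c$ dominates $E^s$ inside $E^{cs}$, the section $\Phi^c$ is the fiberwise \emph{attractor} of $F$, whereas $\Phi^s = E^s$ is the repeller. Then, for $t \in W^u_{\mathrm{loc}}(x)$ and $\gamma_x(t) := (h^u_F)_{xt}(\Phi^c(x))$, one writes
\[
d\bigl(\Phi^c(t),\gamma_x(t)\bigr) = d\bigl(F^n(\Phi^c(f^{-n}t)),\,F^n(\gamma_{f^{-n}x}(f^{-n}t))\bigr),
\]
and since backward contraction along $W^u_f$ keeps $\gamma_{f^{-n}x}(f^{-n}t)$ in a neighborhood of $\Phi^c$ on which $F$ is fiberwise contracting toward $\Phi^c$, the right-hand side tends to zero. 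Hence $\Phi^c|_{W^u_{\mathrm{loc}}(x)} = \gamma_x$ for every $x$, and $E^c$ is uniformly $C^1$ along $W^u_f$ with no fractality hypothesis. Your conditional version happens to be enough for the enclosing proof, but it invokes the heavier Theorem~\ref{theorem cocycle pointwise} where an elementary attractor argument applies, and it obscures the structural reason why $E^c$ is \emph{always} holonomy-invariant along $W^u$ while $E^s$ (the repeller, treated in Theorem~\ref{theorem s bunching}) is so only in the rigid case that triggers the dichotomy.
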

\begin{proof}
Recall that $E^{cs}$ is $C^1$ \cite{has95}. Let \(N = \mathrm{Gr}(\dim E^s, E^{cs})\). Then \(D f|_{E^{cs}}\) induces a partially hyperbolic bundle map \(F: N \to N\), and \(E^c\) defines an \(F\)-invariant section \(\Phi^c: M \to N\). For any \(x \in M\), let 
\[
\gamma_x: W^u_\text{loc}(x) \to N, \quad y \mapsto (h^u_F)_{xy}(\Phi^c(x)).
\]
Note that \(\mathrm{Graph}(\Phi^c)\) is an attractor of \(F\). For every \(t \in W^u_\text{loc}(x)\) and \(n \in \mathbb{Z}^+\), we have
\[
d(\Phi^c(t), \gamma_x(t)) = d(F^{n}(\Phi^c(f^{-n}t)), F^{n}(\gamma_{f^{-n}x}(f^{-n}t))) < d(\Phi^c(f^{-n}t), \gamma_{f^{-n}x}(f^{-n}t)) \to 0
\]
as \(n \to \infty\). Therefore, \(\Phi^c|_{W^u(x)} = \gamma_x\) coincides with an unstable leaf of \(W^u_F\), and \(E^c\) is uniformly \(C^{1}\) along \(W^u_f\). 
\end{proof}

By Journ\'{e}'s lemma \cite{jou88} and Lemma \ref{lemma c along u}, it suffices to prove that \(E^c\) is uniformly \(C^{1}\) along \(W^{cs}\) when \(E^c\) is not fractal. Note that the characteristic polynomial of \(L : \mathbb{T}^3 \to \mathbb{T}^3\) is irreducible, and the Galois group of the characteristic polynomial acts transitively on the eigenvalues \(\{\lambda^s_L, \lambda^c_L, \lambda^u_L\}\) of $L$. In particular, \(\kappa := \frac{\log \lambda^s_L}{\log \lambda^c_L} \notin \mathbb{Z}\). We then prove Theorem \ref{theorem 3dim c local} for different values of \(\kappa\): 

\begin{proposition}
  If \(f\) is \(C^1\) close to \(L\) with \(\kappa > 2\) and \(E^c\) is not fractal, then \(E^c\) is uniformly \(C^{1}\) along \(W^{cs}\).
\end{proposition}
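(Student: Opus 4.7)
The plan is to establish uniform $C^{1}$ regularity of $E^{c}$ along the two transverse subfoliations $W^{s}$ and $W^{c}$ of $W^{cs}$ separately, and then combine them on each 2-dimensional $W^{cs}$-leaf via Journ\'{e}'s lemma. For the $W^{c}$-direction, the hypothesis $\kappa>2$ together with the $C^{1}$-closeness of $f$ to $L$ guarantees that $E^{c}$ is $r$-dominated for some $r>2$: the $r$-domination for $L$ holds precisely when $r<\kappa$, and the strict inequality survives $C^{1}$-small perturbation of the cocycle. Proposition \ref{prop anosov t3} then says that the leaves of $W^{c}$ are $C^{r}$ immersed manifolds, so $E^{c}$, viewed as the tangent direction to a $C^{r}$ curve, is uniformly $C^{r-1}$, and in particular uniformly $C^{1}$, along $W^{c}$.

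For the $W^{s}$-direction, I would apply the non-fractal invariance principle to a projectivized cocycle over $f^{-1}$. Since $E^{cs}$ is bunched and hence $C^{1}$ by Proposition \ref{prop ano}, the projective bundle $N:=\mathbb{P}\,E^{cs}$ is a $C^{1}$ circle bundle, and the projectivization $F:N\to N$ induced by $Df|_{E^{cs}}$ admits $E^{c}$ as an invariant section $\Phi^{c}$. A direct computation in the diagonal form of $Df|_{E^{cs}}$ in the $(E^{s},E^{c})$-basis shows that the fiber-norm of $F^{-1}$ is $\|Df|_{E^{c}}\|/m(Df|_{E^{s}})$, attained near $E^{c}$, whereas $f^{-1}$ expands along $W^{u}_{f^{-1}}=W^{s}_{f}$ at rate $m(Df|_{E^{s}})^{-1}$; partial hyperbolicity of $F^{-1}$ over $f^{-1}$ thus reduces to $\|Df|_{E^{c}}\|<1$, which holds by the contracting-center hypothesis. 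Minimality of $W^{s}_{f}$ follows from structural stability: $f$ is topologically conjugate to $L$ by a homeomorphism that carries the dynamically defined $W^{s}_{f}$ onto $W^{s}_{L}$, and $W^{s}_{L}$ is minimal because the characteristic polynomial of $L$ is irreducible. The non-fractal hypothesis on $E^{c}$ transfers to $\Phi^{c}$ through the $C^{1}$ inclusion $N\hookrightarrow G^{1}\TT^{3}$, so Theorem \ref{them: non fract easy vers} applied to $F^{-1}$ over $f^{-1}$ yields that $\Phi^{c}$, equivalently $E^{c}$, is uniformly $C^{1}$ along $W^{s}_{f}$.

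Finally, I apply Journ\'{e}'s lemma (Proposition \ref{prop journee} with $r=1$) on each $W^{cs}$-leaf to the two transverse subfoliations $W^{s}|_{W^{cs}(x)}$ and $W^{c}|_{W^{cs}(x)}$, whose leaves are uniformly $C^{2}$ and $C^{r}$ (with $r>2$) respectively. Since $E^{c}$, regarded as a section of the $C^{1}$ bundle $\mathbb{P}\,E^{cs}|_{W^{cs}(x)}$, is uniformly $C^{1}$ along both subfoliations by the two preceding steps, Journ\'{e}'s lemma promotes this to uniform $C^{1}$ regularity of $E^{c}$ on each full $W^{cs}$-leaf, with uniform estimates in $x$. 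This is the desired conclusion. The main technical obstacles I anticipate are the minimality of $W^{s}_{f}$ and the uniform partial hyperbolicity of $F^{-1}$ over $\TT^{3}$; the first is handled by the foliation-preserving nature of the structural-stability conjugacy, and the second by the persistence of the strict bunching inequalities valid for $L$ under $C^{1}$-small perturbation.
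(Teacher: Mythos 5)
Your overall strategy is the same as the paper's: split the $W^{cs}$-direction into the transverse subfoliations $W^s$ and $W^c$, get uniform $C^1$ regularity of $E^c$ along each, and combine via Journ\'e's lemma restricted to $W^{cs}$-leaves. The $W^c$-step ($\kappa>2$ gives $r$-domination for some $r>2$, hence $C^r$ center leaves by Proposition \ref{prop anosov t3}, hence $E^c$ is $C^{1+}$ along $W^c$) is exactly the paper's argument. For the $W^s$-step you re-derive the cocycle argument — projectivizing $Df|_{E^{cs}}$, running the non-fractal invariance principle over $f^{-1}$ with $W^u_{f^{-1}}=W^s_f$ — whereas the paper simply cites Theorem \ref{theorem 3dim c}; both are logically the same content, and your verification of the partial hyperbolicity condition (reducing to $\|Df|_{E^c}\|<1$ when $\dim E^s=\dim E^c=1$) is correct.

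The one genuine error is the side remark that minimality of $W^s_f$ follows from structural stability because the conjugacy "carries the dynamically defined $W^s_f$ onto $W^s_L$." This is false: the Franks--Manning conjugacy between $f$ and $L$ is a homeomorphism taking the (weak) stable foliation $W^{cs}_f$ to $W^{cs}_L$ and $W^u_f$ to $W^u_L$, but it does \emph{not} in general respect the strong-stable subfoliation $W^s_f$, which is not a priori a topological invariant. Minimality of $W^s$ for partially hyperbolic Anosov diffeomorphisms on $\TT^3$ is precisely the deep recent result of Avila--Crovisier--Eskin--Potrie--Wilkinson--Zhang mentioned in the introduction, and in the present context it is a standing hypothesis of the enclosing Theorem \ref{theorem 3dim c local}; you should simply invoke that hypothesis rather than attempt to derive it. This does not invalidate the proposition, since the minimality is assumed in the containing theorem, but the derivation you offer is wrong as stated.
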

\begin{proof}
By Theorem \ref{theorem 3dim c}, if \(E^c\) is not fractal, then \(E^c\) is uniformly \(C^{1}\) along \(W^s\). Since \(\kappa > 2\), the leaves of \(W^c\) are uniformly \(C^{2}\) (see Proposition \ref{prop anosov t3}), thus \(E^c\) is always uniformly \(C^{1}\) along \(W^c\). By Journ\'{e}'s lemma, \(E^c\) is \(C^{1}\) along \(W^{cs}\).
\end{proof}
\begin{proposition}
  If \(f\) is \(C^1\) close to \(L\) with \(\kappa < 2\) and \(E^c\) is not fractal, then \(E^c\) is uniformly \(C^{1}\) along \(W^{cs}\).
\end{proposition}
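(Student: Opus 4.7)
The strategy is to apply the non-fractal invariance principle (Theorem \ref{them: non fract easy vers}) to a bundle map over $f^{-1}$, viewing $f^{-1}$ in its \emph{coarse} Anosov form: the splitting $T\TT^3 = E^u_f \oplus E^{cs}_f$ makes $f^{-1}$ an Anosov diffeomorphism with two-dimensional unstable direction $E^{cs}_f$ (in which both $E^c$ and $E^s_f$ are expanding for $f^{-1}$, at distinct rates). The unstable foliation of the base in this coarse form is $W^u_{f^{-1}}=W^{cs}_f$, which is minimal by Proposition \ref{prop anosov t3}. Since $E^{cs}_f$ is $C^{1}$ on $\TT^3$ (Proposition \ref{prop anosov t3}), the Grassmannian $N:=\mathrm{Gr}(1,E^{cs})$ is a $C^{1}$ bundle over $\TT^3$, and $F:=Df^{-1}|_{E^{cs}}$ is a $C^{1}$ bundle morphism on $N$ projecting to $f^{-1}$, for which $E^c$ is the repelling invariant section in each projective fiber.

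The key computation is that the hypothesis $\kappa<2$ is equivalent to $|\lambda^c_L|^{2}<|\lambda^s_L|$, which rearranges to $|\lambda^c_L|/|\lambda^s_L|<|\lambda^c_L|^{-1}$. The maximal fiber expansion of $F$ is realized at its repelling section $E^c$, with rate approaching $|\lambda^c_L|/|\lambda^s_L|$ as $f\to L$ in $C^1$-topology, while the minimal expansion of $f^{-1}$ along $W^{cs}_f$ approaches $|\lambda^c_L|^{-1}$. By continuity of the splitting $E^s\oplus E^c\oplus E^u$ under $C^1$-perturbation, for $f$ sufficiently $C^1$-close to $L$ one has the uniform estimate
\[
1<\|F^{k}_x\|<m\bigl(Df^{-k}|_{E^{cs}(x)}\bigr),\qquad \forall x\in\TT^3,
\]
for some sufficiently large $k\in\ZZ^+$. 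This is precisely the partial-hyperbolicity condition of Section \ref{sec: non-frac easy} for the bundle morphism $F$ over the base $f^{-1}$.

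Since $E^c$ is a continuous $F$-invariant section of $N$ with no fractal graph by hypothesis, and $W^u_{f^{-1}}=W^{cs}_f$ is minimal, Theorem \ref{them: non fract easy vers} yields that $E^c$ is uniformly $C^{1}$ along $W^{cs}_f$ and invariant under the associated unstable holonomy, which is the desired conclusion. The main obstacle is simply the careful justification of the coarse partially hyperbolic reformulation of $f^{-1}$ (Anosov is a special case of partial hyperbolicity in the paper's sense) together with the uniform fiber estimate above, which is the quantitative content of $\kappa<2$. Unlike the $\kappa>2$ case, the argument avoids any appeal to $C^2$ regularity of $W^c$ leaves (which fails here); instead, the two-dimensional direction $E^{cs}_f$ plays the role of the ``unstable'' for the non-fractal principle applied to $f^{-1}$, which is exactly why the threshold $\kappa<2$ accommodates this approach.
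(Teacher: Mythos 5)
Your proposal is correct and follows essentially the same route as the paper: both pass to the coarse Anosov form of $f^{-1}$ with unstable direction $E^{cs}_f$, take $N=\PP(E^{cs})=\mathrm{Gr}(1,E^{cs})$ with $F$ induced by $Df^{-1}|_{E^{cs}}$, verify that $\kappa<2$ is exactly the condition $\|F_x\|\approx|\lambda^c_L|/|\lambda^s_L|<|\lambda^c_L|^{-1}\approx m(Df^{-1}|_{E^{cs}})$ making $F$ partially hyperbolic for $f$ $C^1$-close to $L$, and then apply the non-fractal invariance principle with $W^u_{f^{-1}}=W^{cs}_f$ minimal.
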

\begin{proof}
Let \(N = \PP(E^{cs})\) and \(F: N \to N\) be the bundle map induced by \(D f^{-1}|_{E^{cs}}\) over $f^{-1}$. Since \(\kappa < 2\) and \(f\) is \(C^1\) close to \(L\), for any $x \in M$, $\|F_x\|$ is close to $\frac{|D f^{-1}|_{E^{s}(x)}|}{|D f^{-1}|_{E^{c}(x)}|}< |D f^{-1}|_{E^{c}(x)}|$. Therefore, \(F\) is partially hyperbolic with respect to \(E^u_{f^{-1}} = E^{cs}_f\). Applying Theorem \ref{theorem cocycle pointwise} to \(F\) and \(W^u_{f^{-1}} = W^{cs}_f\), we conclude that \(E^c\) is uniformly \(C^{1}\) along \(W^{cs}\) if \(E^c\) is not fractal.
\end{proof}
\subsection{Proof of Theorem \ref{theorem 3dim rigid}}
Let $L: \mathbb{T}^3 \to \mathbb{T}^3$ be an Anosov automorphism homotopic to $f$. Since $E^s$ is $\theta_s+$-H\"older continuous, Theorem \ref{theorem 3dim s} implies that $E^s \oplus E^u$ is jointly integrable. This ensures that $f$ and $L$ share the same $c$-periodic data \cite{gs20}, and that $W^s$ is minimal \cite{rgz17}. 

Similarly, by Theorem \ref{theorem 3dim c}, the $\theta_c+$-H\"older continuity of $E^c$ ensures that $W^c$ is a $C^{1}$ foliation. This implies that $f$ and $L$ share the same $s$-periodic data \cite{gog12}.

Finally, note that $f$ is volume-preserving. For any periodic point $p$, we have the relation $\lambda^u(p) = -(\lambda^c(p) + \lambda^s(p))$ among the Lyapunov exponents. Consequently, $f$ and $L$ have the same periodic data. By the rigidity results in \cite{gg08, gog17}, $f$ is $C^\infty$ conjugate to $L$.

We end this section with examples illustrating that the volume-preserving condition and the three-dimensional assumption are necessary in Theorem \ref{theorem 3dim rigid}. 
\begin{example}\label{example c1}
Let \(L:\mathbb T^3 \to \mathbb T^3\) be a partially hyperbolic linear Anosov automorphism with \(E^c_L\) being contracting. Let \(\textbf{e}\) be the unit vector tangent to \(E^u_L\), and let \(\varphi:\mathbb T^3 \to \mathbb R\) be a \(C^\infty\) function such that \(\varphi(0) = 0\) and \(\frac{\partial \varphi}{\partial \textbf{e}}(0) \neq 0\). For any small \(\epsilon > 0\), define
    \[
    f : \mathbb T^3 \to \mathbb T^3, \quad x \mapsto Lx + \epsilon \varphi(x) \textbf{e}.
    \]
    Since \(f\) is \(C^1\)-close to \(L\), \(f\) is Anosov, and the stable and unstable distributions \(E^{cs}\) and \(E^u\) are \(C^{1}\) distributions \cite{has97}. Note that
    \[
    W^u_f = W^u_L, \quad W^{cu}_f = W^{cu}_L, \quad W^{su}_f = W^{su}_L
    \]
are \(C^\infty\) foliations. In particular, \(E^u\) is \(C^\infty\), and \(E^c = E^{cu} \cap E^{cs}\) is \(C^{1}\). Since $W^{su}_f$ is well-defined, by the bunching arguments \cite{psw97} and Journ\'e's Lemma \cite{jou88}, $W^s$ is a $C^1$ foliation, and $E^s$ is $1-$-H\"older continuous \cite{hw99}. By Theorem \ref{theorem 3dim s}, $E^s$ is $C^1$. However, we have: 
    \[
    \lambda^u_f(0) = \lambda^u_L(0) + \frac{\partial \varphi}{\partial \textbf{e}}(0) \neq \lambda^u_L(0),
    \]
 thus \(f\) is not \(C^1\)-conjugate to \(L\). 
\end{example}
\begin{example}\label{example dim}
    Let \(f: \mathbb T^2 \to \mathbb T^2\) be a volume-preserving Anosov automorphism that is not \(C^1\)-conjugate to the linear model $L_f$. For any \(n \geq 2\), let \(L: \mathbb T^n \to \mathbb T^n\) be a linear Anosov automorphism. Then \(f \times L : \mathbb T^{n+2} \to \mathbb T^{n+2}\) is a volume-preserving Anosov diffeomorphism with all distributions \(C^1\), but \(f \times L\) is not \(C^1\)-conjugate to \(L_f \times L\).
\end{example}

\appendix

\section{Pinching coefficients and critical H\"older exponents}
The aim of this appendix is to show that the critical H\"older exponents \(\alpha_s\) and \(\alpha_c\) in Theorem \ref{theorem s bunching} and Theorem \ref{theorem c bunching ss} coincide with the supremum of the pinching coefficients \(\theta_s\) and \(\theta_c\) respectively when \(M = \TT^3\). Consequently, we can use \(\theta_s\) and \(\theta_c\) as the critical H\"older exponents in Theorem \ref{theorem 3dim s} and Theorem \ref{theorem 3dim c}. The following lemma is a special case of a Liv\v sic type theorem for matrix valued cocycle:

\begin{lemma}[Theorem 1.4 in \cite{kal11}]\label{lemma livsic}
    Let \(f:M \to M\) be a topologically transitive Anosov diffeomorphism, and let \(A:M \to \mathbb R\) be an \(\mathbb R\)-valued cocycle. If \(A^n(p) > e^{n\alpha} > 1\) for every \(p \in M\) with \(f^n p = p\), then for any \(\epsilon > 0\), there exists a constant \(C_\epsilon\) such that 
    \[
    A^k(x) \geq C_\epsilon e^{k(\alpha - \epsilon)}, \quad \forall x \in M, \, k \in \mathbb{N}.
    \]
\end{lemma}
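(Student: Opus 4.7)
My plan is to reduce the statement to an additive Birkhoff-sum inequality and then apply the Anosov closing lemma (or Bowen's specification property) to propagate the estimate from periodic orbits to arbitrary orbits. Setting $B := \log A$, the multiplicative cocycle $A^k(x) = \prod_{i=0}^{k-1} A(f^i x)$ becomes the Birkhoff sum $B^k(x) = \sum_{i=0}^{k-1} B(f^i x)$; the hypothesis reads $B^n(p) \geq n\alpha$ for every $n$-periodic $p$, and we seek $B^k(x) \geq k(\alpha - \epsilon) + \log C_\epsilon$ uniformly in $x$. This reformulation makes clear that the required task is a careful comparison of $B^k(x)$ with $B^n(p)$ for a well-chosen $p$.

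Next I would invoke the Anosov closing lemma: since $f$ is a topologically transitive Anosov diffeomorphism, there exist constants $C_0, L_0 > 0$ and $\lambda \in (0,1)$ such that for every $x \in M$ and every sufficiently large $k$ one can find a periodic point $p$ of period $n \in [k, k + L_0]$ whose orbit exponentially shadows that of $x$, i.e.\ $d(f^i x, f^i p) \leq C_0 \lambda^{\min(i, n-i)}$ for $0 \leq i \leq n-1$. The small excess $L_0$ in the period arises because one first uses transitivity to move $x$ slightly so that its $k$-th iterate returns close to itself, enabling the hyperbolic closing lemma to apply. This periodic point $p$ is the bridge that transports the hypothesis to $x$. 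Assuming H\"older continuity of $B$ with exponent $\beta$ (standard in Liv\v sic-type statements, and what Kalinin uses), the exponential shadowing yields
\[
|B^k(x) - B^k(p)| \leq L \sum_{i=0}^{k-1} \lambda^{\beta \min(i, n-i)} \leq D
\]
uniformly in $x$ and $k$, while the leftover piece satisfies $|B^n(p) - B^k(p)| \leq L_0 \|B\|_\infty$. Combining these two estimates with $B^n(p) \geq n\alpha \geq k\alpha$ gives $B^k(x) \geq k\alpha - D'$, i.e.\ $A^k(x) \geq C e^{k\alpha}$, which is in fact stronger than the stated conclusion. The $\epsilon$-loss in the exponent of the statement is there to accommodate a weaker regularity hypothesis: if $B$ is merely continuous, the bounded shadowing bound is replaced by a sublinear $o(k)$ error extracted from the modulus of continuity of $B$, which costs exactly the factor $e^{-k\epsilon}$.

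The main obstacle is the robustness of the closing/specification step: one must simultaneously produce a periodic orbit whose period is within $O(1)$ of $k$ and whose orbit tracks the $f$-orbit of $x$ throughout its entire length, not only near the endpoints. Topological transitivity is essential in this step, since it permits the small initial adjustment of $x$ that is needed before the hyperbolic closing lemma becomes applicable; without it, one could not guarantee a nearly-closed orbit segment of the prescribed length through a given point. Once the shadowing $p = p(x,k)$ has been produced with uniform constants, the remaining estimates are uniform in $x$ and $k$, and the $\epsilon$-loss is absorbed in the final step.
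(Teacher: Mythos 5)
The statement you are proving is cited in the paper as Theorem~1.4 of Kalinin's paper \cite{kal11} and is invoked as a black box; the paper supplies no proof of its own, so there is no internal argument to compare your sketch against. What you have written is essentially the standard proof of the scalar (abelian) case of that theorem, and it is correct in outline. Taking $B = \log A$ (note the statement implicitly assumes $A>0$ and at least continuity of $A$, which you correctly read into it), invoking specification/closing to produce a periodic orbit of period $n=k+O(1)$ that $\delta$-shadows the length-$k$ segment through $x$, and comparing Birkhoff sums via the modulus of continuity is exactly the mechanism; the $\epsilon$ in the exponent is absorbed by choosing $\delta$ so that the modulus of continuity of $B$ at scale $\delta$ is below $\epsilon$, and the $O(1)$ gap contributes only the additive constant $\log C_\epsilon$.

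One presentational imprecision worth flagging: you phrase the production of the shadowing orbit as ``use transitivity to move $x$ slightly so that its $k$-th iterate returns close to itself, then apply the hyperbolic closing lemma.'' That is not how the step is actually carried out. The clean statement is Bowen's specification property, which holds for a topologically transitive Anosov diffeomorphism of a connected manifold (transitive $\Rightarrow$ mixing $\Rightarrow$ specification via the spectral decomposition). Specification directly furnishes, for any prescribed $\delta$ and any orbit segment $\{x,\dots,f^{k-1}x\}$, a periodic point $p$ of period $k+L(\delta)$ whose orbit $\delta$-shadows the segment on $\{0,\dots,k-1\}$; no preliminary perturbation of $x$ is involved, and the gap $L(\delta)$ is uniform. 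The exponential decay of $d(f^i x, f^i p)$ in the interior of the segment, which you use for the Hölder aside, then follows from hyperbolicity once the two genuine orbits stay $\delta$-close for time $k$ (the stable component decays forward, the unstable component decays backward), so that aside is also correct as stated. But since the lemma only claims the $\epsilon$-loss version, the $\delta$-shadowing bound together with plain continuity of $A$ already suffices; none of the sharper exponential-shadowing bookkeeping is required.
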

\begin{proposition}\label{prop 3dim pinch}
Let \(f \in \Diff^{2}(\mathbb T^3)\) be a partially hyperbolic Anosov diffeomorphism. Then \(\theta_s = \alpha_s\) and \(\theta_c = \alpha_c\). 
\end{proposition}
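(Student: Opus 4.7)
The plan is to prove $\theta_s = \alpha_s$ in detail; the argument for $\theta_c = \alpha_c$ is entirely analogous. Since every invariant bundle on $\TT^3$ is one-dimensional, set
\[
\phi_k(x) := \log\|Df^k|_{E^c(x)}\| - \log\|Df^k|_{E^s(x)}\|, \qquad \psi_k(x) := \log\|Df^k|_{E^u(x)}\|;
\]
both are positive additive cocycles over $f$, with $\psi_1 \geq \log \lambda_u > 0$ uniformly for some $\lambda_u > 1$. Unfolding definitions gives $\theta_s = \sup_{k \in \ZZ^+} \inf_{x \in \TT^3} \phi_k(x)/\psi_k(x)$, while the cocycle identity $\phi_{nk}(f^{-nk}x) = \sum_{i=1}^n \phi_k(f^{-ik}x)$ (after the substitution $i = n-j$) rewrites
\[
\alpha_s(k,x) = \liminf_{n\to\infty} \frac{\phi_{nk}(f^{-nk}x)}{\psi_{nk}(f^{-nk}x)}.
\]
A key consequence is that for any $f$-periodic point $p$ of period $N$, direct evaluation along the periodic backward orbit gives $\alpha_s(1,p) = \phi_N(p)/\psi_N(p)$.

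For the direction $\alpha_s \geq \theta_s$: fix any $\theta < \theta_s$ and choose $k_0, c > 0$ such that the H\"older cocycle $g_\theta := \phi_1 - \theta \psi_1$ satisfies $S_{k_0} g_\theta \geq c$ uniformly on $\TT^3$. Splitting an arbitrary $m = q k_0 + r$ and iterating yields the uniform lower bound $S_m g_\theta(y) \geq (c/k_0)m - O(1)$; dividing by $\psi_m(y) \leq m \cdot \max_x \log\|Df|_{E^u(x)}\|$ gives $\phi_m(y)/\psi_m(y) > \theta + \delta$ uniformly for some $\delta > 0$ and all large $m$, whence $\alpha_s \geq \theta + \delta$. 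Letting $\theta \nearrow \theta_s$ concludes this direction.

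The converse $\alpha_s \leq \theta_s$ is where Liv\v sic's theorem enters. Suppose for contradiction $\alpha_s > \theta_s$ and pick $\theta \in (\theta_s, \alpha_s)$. The periodic formula above forces, for every $f$-periodic $p$ of period $N$,
\[
S_N g_\theta(p) \geq (\alpha_s - \theta)\psi_N(p) \geq cN, \qquad c := (\alpha_s - \theta)\log\lambda_u > 0.
\]
Since $f \in \Diff^2(\TT^3)$ is a transitive Anosov diffeomorphism and $g_\theta$ is H\"older continuous, Kalinin's Liv\v sic theorem (Lemma \ref{lemma livsic}) applied to the multiplicative cocycle $A := e^{g_\theta}$ furnishes, for every $\epsilon \in (0,c)$, a constant $C_\epsilon > 0$ with $A^k(x) \geq C_\epsilon e^{k(c-\epsilon)}$ uniformly in $x, k$; equivalently $S_k g_\theta(x) \to +\infty$ uniformly, so $\phi_k > \theta \psi_k$ uniformly for some $k$, forcing $\theta \leq \theta_s$ --- a contradiction.

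The proof of $\theta_c = \alpha_c$ is identical after replacing $\psi_k$ by $\eta_k := -\log\|Df^k|_{E^s}\| > 0$ and backward orbits by forward orbits. A parallel telescoping yields $\alpha_c(k,x) = \liminf_n \phi_{nk}(x)/\eta_{nk}(x)$ and $\theta_c = \sup_k \inf_x \phi_k/\eta_k$; both directions then transplant verbatim, with Liv\v sic now applied directly to $f$. The only genuine technical point --- and the only place I expect mild friction --- is the reindexing that converts the definition of $\alpha_s(k,x)$ from a liminf of ratios of two finite sums along a backward orbit into a liminf of ratios of single Birkhoff sums; once this bookkeeping is done, the whole argument reduces transparently to a Liv\v sic-type transfer between uniform pinching on $\TT^3$ and pinching along periodic orbits.
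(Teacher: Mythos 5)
Your proposal is correct and follows essentially the same route as the paper: both directions hinge on (i) the pointwise-pinching-to-Birkhoff-ratio transfer, which relies on the one-dimensionality of the bundles, and (ii) the periodic-orbit evaluation $\alpha_s(\cdot,p)=\phi_N(p)/\psi_N(p)$ combined with Kalinin's Liv\v{s}ic theorem (Lemma~\ref{lemma livsic}) applied to $A=e^{\phi_1-\theta\psi_1}$. The only differences are cosmetic: you phrase the converse as a contradiction rather than directly, and you carry out the remainder estimate $m=qk_0+r$ explicitly (which in fact cleans up a loose inequality in the paper's statement of the first direction), but the key lemmas and decomposition are identical.
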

\begin{proof}
We prove \(\theta_s = \alpha_s\); the proof for \(\theta_c = \alpha_c\) is similar. 

We first prove \(\theta_s \leq \alpha_s\). It follows from the definition of \(\theta_s\) that for any \(\theta < \theta_s\), there exists a constant \(k_\theta \in \mathbb Z^+\) such that
\[
|D f^{k_\theta}|_{E^s(x)}| \cdot |D f^{k_\theta}|_{E^u(x)}|^\theta < |D f^{k_\theta}|_{E^c(x)}|, \quad \forall x \in \TT^3.
\]
Since \(\dim E^u = \dim E^c = \dim E^s = 1\),
\[
\alpha_s \geq \alpha_s(x, k_\theta) = \liminf_{n \to +\infty} \frac{\sum_{i=1}^n \log |D f^{k_\theta}|_{E^c(f^{-ik_\theta}x)}| - \log |D f^{k_\theta}|_{E^s(f^{-ik_\theta}x)}|}{\sum_{i=1}^n \log |D f^{k_\theta}|_{E^u(f^{-ik_\theta}x)}|} \geq \theta,
\]
which implies \(\theta_s \leq \alpha_s\). 

We then prove \(\theta_s \geq \alpha_s\). For any \(\alpha < \alpha_s\), consider the \(\mathbb R\)-valued cocycle
\[
A_\alpha(x) := \frac{|Df|_{E^c(x)}|}{|Df|_{E^s(x)}| \cdot |Df|_{E^u(x)}|^\alpha}.
\]
By taking a finite iteration, we may assume that \(|Df|_{E^u(x)}| > e\), \(\forall x \in \TT^3\). Then for every \(p \in M\) with \(f^l p = p\),
\[
A_\alpha^l(p) = |Df^l|_{E^u(p)}|^{\alpha_s(p, l) - \alpha} \geq e^{l(\alpha_s - \alpha)} > 1.
\]
By Lemma \ref{lemma livsic}, for every \(\epsilon > 0\), there exists a constant \(C_\epsilon > 0\) such that 
\[
A_\alpha^k(x) \geq C_\epsilon e^{k(\alpha_s - \alpha - \epsilon)},
\]
for every \(k \in \mathbb Z^+\) and \(x \in M\). By choosing \(\epsilon < \alpha_s - \alpha\) small and \(k_0 \in \mathbb Z^+\) large, we have \(A^{k_0}_\alpha(x) > 1\) for every \(x \in M\), i.e., \(E^s\) is \(\alpha\)-pinching, which implies \(\theta_s \geq \alpha_s\) since \(\alpha\) can be taken arbitrarily close to \(\alpha_s\). 
\end{proof}
We then recall some facts about pinching and H\"older continuity. For any \(f \in \Diff^1(M)\), an \(f\)-invariant decomposition \(T M = E \oplus F\) is called a \textit{dominated splitting} if:  
\begin{itemize}
    \item \(E\) and \(F\) are \(D f\)-invariant;
    \item there exist constants \(k \in \mathbb Z^+\) and \(0 < \lambda < 1\) such that \(\|D f^k|_{E(x)}\| \leq \lambda^k m (D f^k|_{F(x)})\) for every \(x \in M\). 
\end{itemize}
\begin{proposition}[Theorem 4.1 in \cite{cp15}]\label{prop pinching}
For any \(f \in \Diff^2(M)\) with dominated splitting \(T M = E \oplus F\), if \(E\) is \textit{\(\theta\)-pinching} in the sense that
\[
\|D f^k|_{E(x)}\| \cdot \|D f^k|_{F(x)}\|^\theta < m(D f^k|_{F(x)}), \quad \forall x \in M,
\]
then \(E\) is \(\theta\)-H\"older continuous. 
\end{proposition}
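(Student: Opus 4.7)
The plan is to realize $E$ as the unique attracting invariant section of the Grassmannian bundle dynamics induced by $Df$, and to extract the $\theta$-H\"older estimate by a telescoping graph-transform argument --- essentially the H\"older section theorem of Hirsch--Pugh--Shub \cite{hps77,psw10}, with the pinching inequality playing the role of the critical contraction-versus-expansion condition. Replacing $f$ by $f^k$, one may assume the pinching holds for $k = 1$ uniformly on $M$; H\"older regularity is insensitive to such a passage to a power.

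Let $G = \mathrm{Gr}(\dim E, TM) \to M$ be the Grassmannian bundle of $(\dim E)$-planes and $f_* : G \to G$ the induced bundle map. Using $F$ as a reference complement, a plane near $E(x)$ is encoded by a linear map $L_x : E(x) \to F(x)$ (its graph), and the $Df$-invariance of both $E$ and $F$ yields the graph-transform formula $L_{f(x)} = Df|_{F(x)}\circ L_x\circ (Df|_{E(x)})^{-1}$. Inverting, $f_*^{-1}$ contracts fibers near $E$ at the pointwise rate $\|Df|_{E}\|/m(Df|_{F}) < 1$ (by domination), so $E$ is the unique attracting invariant section of $f_*^{-1}$.

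For close $x, y \in M$, pick $N = N(x,y)$ with $\|Df^N|_F\|\cdot d(x,y) \asymp 1$, i.e., $N \sim -\log d(x,y) / \log\|Df|_F\|$. Fix a Lipschitz reference distribution $\hat E$ uniformly close to $E$ (obtainable by mollification in local charts), and set $E_n := (Df^n)^{-1}\hat E\circ f^n$. The triangle inequality gives
$$d_G(E(x), E(y)) \leq d_G(E(x), E_N(x)) + d_G(E_N(x), E_N(y)) + d_G(E_N(y), E(y)),$$
where the boundary terms decay geometrically at the fiber contraction rate, and the middle term is bounded, using Lipschitzness of $\hat E$ and bounded distortion of $Df^N|_E, Df^N|_F$ along nearby orbits, by the same contraction product times $d(f^N x, f^N y) \lesssim \|Df^N|_F\|\cdot d(x,y) \asymp 1$. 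Hence
$$d_G(E(x), E(y)) \lesssim \prod_{i=0}^{N-1}\frac{\|Df|_E(f^i x)\|}{m(Df|_F(f^i x))} \leq \prod_{i=0}^{N-1}\|Df|_F(f^i x)\|^{-\theta} \asymp d(x,y)^\theta,$$
where the middle inequality is the $\theta$-pinching condition applied pointwise along the orbit of $x$.

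The main technical obstacle is that the pinching is given only for a single iterate $k$ at each point, but must be propagated along arbitrary-length orbits; this is handled by sub-multiplicativity of $\|Df^n|_E\|$ and $m(Df^n|_F)$ combined with a bounded-distortion estimate, for which the $C^2$ hypothesis is essential (it gives H\"older control on $\log\|Df|_E\|$ and $\log m(Df|_F)$ along orbits, allowing pointwise quantities to be replaced by orbital averages up to bounded factors). A secondary point is the anisotropy of the base expansion $d(f^N x, f^N y)$: decomposing $y - x$ into $E$- and $F$-components, the $F$-direction expands at maximal rate $\|Df|_F\|$ and dominates the estimate, while the $E$-component's slower growth (bounded by $\|Df|_E\| < m(Df|_F)$) is absorbed into a strictly lower-order term.
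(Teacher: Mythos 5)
The paper does not prove this proposition itself; it invokes it directly as Theorem~4.1 of \cite{cp15} (the Crovisier--Potrie lecture notes), where the result is established by exactly the graph-transform/H\"older-section argument you sketch. So there is no internal proof to compare against, and your proposal should be judged on its own merits.

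Your outline is the correct standard argument and the structure is sound: realize $E$ as the attracting section of the induced Grassmannian dynamics for $f_*^{-1}$, pick $N$ so that $d(f^N x, f^N y)$ reaches a fixed local scale, pull back a Lipschitz reference distribution, and telescope. The crucial inequality chain is right, except that the final step should read $\prod_{i<N}\|Df|_{F(f^ix)}\|^{-\theta}\le\|Df^N|_{F(x)}\|^{-\theta}\asymp d(x,y)^\theta$ (a one-sided estimate coming from submultiplicativity of the norm, not an asymptotic equality). Two points deserve a little more care than the sketch gives them. First, the middle term $d_G(E_N(x),E_N(y))$ compares \emph{two different} pullbacks $(Df^N_x)^{-1}$ and $(Df^N_y)^{-1}$; to reduce it to ``contraction rate times $d(f^Nx,f^Ny)$'' one must either work inside a single local chart along the pair of shadowing orbits, or insert an intermediate term $(Df^N_x)^{-1}\hat E(f^Ny)$ and control the discrepancy $(Df^N_x)^{-1}-(Df^N_y)^{-1}$ by uniform continuity of $Df$ along the orbit segments (which stay $\delta$-close for $i\le N$ by the choice of $N$). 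This is a standard step but it is where the actual estimate lives, and the phrase ``bounded distortion of $Df^N|_E,Df^N|_F$'' glosses it. Second, your attribution of the $C^2$ hypothesis to a bounded-distortion need is an over-attribution: for this particular statement uniform continuity of $Df$ (i.e.\ $C^1$) already suffices for the orbit-comparison step, because $d(f^ix,f^iy)$ is uniformly small for $i\le N$; the $C^2$ hypothesis in the cited reference is inherited from the ambient framework rather than being the sharp requirement for this lemma. Neither point is a genuine gap, but both should be tightened in a written-out proof.
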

It is straightforward to check that \(\theta_s\) and \(\theta_c\), as defined in Section \ref{section 3dim}, are the supremum of the pinching coefficients of \(E^s\) and \(E^{cu}\) (with respect to \(f^{-1}\)) respectively. Note that \(E^{cs}\) is \(C^{1}\), and \(E^c = E^{cs} \cap E^{cu}\) has at least the same H\"older regularity as \(E^{cu}\). Therefore, we have:
\begin{corollary}\label{coro pinching}
For any partially hyperbolic Anosov diffeomorphism \(f \in \Diff^{2}(\mathbb T^3)\), \(E^s\) is \(\theta_s\)-H\"older continuous, and \(E^c\) is \(\theta_c\)-H\"older continuous. 
\end{corollary}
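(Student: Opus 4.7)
The plan is to deduce the corollary directly from the classical pinching-to-Hölder transfer (Proposition \ref{prop pinching} above, due to Crovisier--Potrie), by checking that, in dimension three, the defining inequalities for $\theta_s$ and $\theta_c$ are precisely the pinching conditions of that proposition applied to suitable $f$- or $f^{-1}$-dominated splittings. The remark immediately preceding the statement already flags this strategy, so the work is essentially bookkeeping, but the two cases are not symmetric and each requires identifying the correct two-dimensional invariant plane on which to run the pinching estimate.

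For $E^s$ I would take the dominated splitting $TM=E^s\oplus E^{cu}$ for $f$ itself. Since $f$ is partially hyperbolic Anosov with $E^c$ contracting, for all sufficiently large $k$ we have $|Df^k|_{E^c(x)}|<1<|Df^k|_{E^u(x)}|$, and consequently
\[
\|Df^k|_{E^{cu}(x)}\|=|Df^k|_{E^u(x)}|,\qquad m(Df^k|_{E^{cu}(x)})=|Df^k|_{E^c(x)}|.
\]
Under this identification, the $\theta$-pinching inequality of Proposition \ref{prop pinching} for $E^s$ inside $E^s\oplus E^{cu}$ becomes exactly
$|Df^k|_{E^s(x)}|\cdot|Df^k|_{E^u(x)}|^\theta<|Df^k|_{E^c(x)}|$,
which is the defining inequality of $\theta_s$. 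Hence for every $\theta<\theta_s$ Proposition \ref{prop pinching} yields $\theta$-Hölder continuity of $E^s$, giving the claim.

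For $E^c$ I would use that $E^{cs}$ is globally $C^1$ (Proposition \ref{prop ano}, bunching) and that $E^c=E^{cs}\cap E^{cu}$, so the Hölder exponent of $E^c$ is at least that of $E^{cu}$. It therefore suffices to bound the regularity of $E^{cu}$, which I would do by working with $f^{-1}$ and its dominated splitting $TM=E^{cu}\oplus E^s$ (with $E^s$ now the strongest-expanded direction). Again using that $E^c$ is $f$-contracted, for large $k$ we have $\|Df^{-k}|_{E^{cu}(x)}\|=|Df^{-k}|_{E^{c}(x)}|=|Df^k|_{E^c(x)}|^{-1}$, so the $\theta$-pinching inequality of Proposition \ref{prop pinching} for $E^{cu}$ in this splitting reads
\[
|Df^k|_{E^c(x)}|^{-1}\cdot|Df^k|_{E^s(x)}|^{-\theta}<|Df^k|_{E^s(x)}|^{-1},
\]
i.e.\ $|Df^k|_{E^s(x)}|^{1-\theta}<|Df^k|_{E^c(x)}|$, which is exactly the defining inequality for $\theta<\theta_c$. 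Applying Proposition \ref{prop pinching} to $f^{-1}$ then gives $\theta$-Hölder continuity of $E^{cu}$, and hence of $E^c$, for every $\theta<\theta_c$.

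The only step that is not purely notational is the verification that the splittings $E^s\oplus E^{cu}$ for $f$ and $E^{cu}\oplus E^s$ for $f^{-1}$ are genuinely dominated and that $\|Df^{\pm k}|_{E^{cu}}\|$ and $m(Df^{\pm k}|_{E^{cu}})$ are realized by the expected one-dimensional factors; both reduce, after passing to a sufficiently high iterate, to the uniform partial hyperbolicity estimates for $f$ and the fact that $E^c$ is uniformly contracted. This is where the three-dimensional hypothesis enters essentially, and it is the main (mild) obstacle; once it is settled, the two cases follow from Proposition \ref{prop pinching} as above.
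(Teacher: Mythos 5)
Your proof is correct and is essentially the same as the paper's, which simply says it is ``straightforward to check'' that $\theta_s$ and $\theta_c$ are the pinching coefficients of $E^s$ (for $f$) and $E^{cu}$ (for $f^{-1}$), then invokes Proposition~\ref{prop pinching} and the $C^1$-regularity of $E^{cs}$. You have supplied the bookkeeping the paper elides, including the correct handling of the uniformly bounded angle constants by passing to a high iterate.
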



\bibliographystyle{plain}
\bibliography{ref}

\begin{thebibliography}{10}

\bibitem{alos24}
S.~Alvarez, M.~Leguil, D.~Obata, and B.~Santiago.
\newblock Rigidity of {$u$}-gibbs measures near conservative {A}nosov diffeomorphisms on {$\Bbb{T}^3$}.
\newblock {\em J. Eur. Math. Soc. (JEMS)}, published online first, 2024.

\bibitem{Ano69}
D.~V. Anosov.
\newblock Geodesic flows on closed {R}iemannian manifolds of negative curvature.
\newblock {\em Trudy Mat. Inst. Steklov.}, 90:209, 1967.

\bibitem{av68}
A.~Avez.
\newblock Anosov diffeomorphisms.
\newblock In {\em Topological {D}ynamics ({S}ymposium, {C}olorado {S}tate {U}niv., {F}t. {C}ollins, {C}olo., 1967)}, pages 17--51. W. A. Benjamin, Inc., New York-Amsterdam, 1968.

\bibitem{avw1}
A.~Avila, M.~Viana, and A.~Wilkinson.
\newblock Absolute continuity, {L}yapunov exponents and rigidity {I}: geodesic flows.
\newblock {\em J. Eur. Math. Soc. (JEMS)}, 17(6):1435--1462, 2015.

\bibitem{avw2}
A.~Avila, M.~Viana, and A.~Wilkinson.
\newblock Absolute continuity, {L}yapunov exponents, and rigidity {II}: systems with compact center leaves.
\newblock {\em Ergodic Theory Dynam. Systems}, 42(2):437--490, 2022.

\bibitem{BHR}
B.~B\'ar\'any, M.~Hochman, and A.~Rapaport.
\newblock Hausdorff dimension of planar self-affine sets and measures.
\newblock {\em Invent. Math.}, 216(3):601--659, 2019.

\bibitem{bed89}
T.~Bedford.
\newblock The box dimension of self-affine graphs and repellers.
\newblock {\em Nonlinearity}, 2(1):53--71, 1989.

\bibitem{bl93}
Y.~Benoist and F.~Labourie.
\newblock Sur les diff\'eomorphismes d'{A}nosov affines \`a\ feuilletages stable et instable diff\'erentiables.
\newblock {\em Invent. Math.}, 111(2):285--308, 1993.

\bibitem{bow79}
R.~Bowen.
\newblock Hausdorff dimension of quasicircles.
\newblock {\em Inst. Hautes \'Etudes Sci. Publ. Math.}, (50):11--25, 1979.

\bibitem{bbi09}
M.~Brin, D.~Burago, and S.~Ivanov.
\newblock Dynamical coherence of partially hyperbolic diffeomorphisms of the 3-torus.
\newblock {\em J. Mod. Dyn.}, 3(1):1--11, 2009.

\bibitem{bw08}
K.~Burns and A.~Wilkinson.
\newblock Dynamical coherence and center bunching.
\newblock {\em Discrete Contin. Dyn. Syst.}, 22(1-2):89--100, 2008.

\bibitem{cp15}
B.~Crovisier and R.~Potire.
\newblock Introduction to partially hyperbolic dynamics.
\newblock School on Dynamical Systems, ICTP, Trieste, 2015.

\bibitem{lla92}
R.~de~la Llave.
\newblock Smooth conjugacy and {S}-{R}-{B} measures for uniformly and non-uniformly hyperbolic systems.
\newblock {\em Comm. Math. Phys.}, 150(2):289--320, 1992.

\bibitem{lmmt2}
R.~de~la Llave, J.~Marco, and R.~Moriy\'on.
\newblock Invariants for smooth conjugacy of hyperbolic dynamical systems. {I-IV}.
\newblock {\em Comm. Math. Phys.}, 109,112,116, 1987-1988.

\bibitem{dew21}
J.~Dewitt.
\newblock Local {L}yapunov spectrum rigidity of nilmanifold automorphisms.
\newblock {\em J. Mod. Dyn.}, 17:65--109, 2021.

\bibitem{dg24}
J.~Dewitt and A.~Gogolev.
\newblock Dominated splitting from constant periodic data and global rigidity of {A}nosov automorphisms.
\newblock {\em Geom. Funct. Anal.}, 2024.

\bibitem{diaz19}
L.~J. D\'{\i}az, K.~Gelfert, M.~Gr\"{o}ger, and T.~J\"{a}ger.
\newblock Hyperbolic graphs: critical regularity and box dimension.
\newblock {\em Trans. Amer. Math. Soc.}, 371(12):8535--8585, 2019.

\bibitem{fal90}
K.~Falconer.
\newblock {\em Fractal geometry}.
\newblock John Wiley \& Sons, Ltd., Chichester, 1990.
\newblock Mathematical foundations and applications.

\bibitem{fal88}
K.~J. Falconer.
\newblock The {H}ausdorff dimension of self-affine fractals.
\newblock {\em Math. Proc. Cambridge Philos. Soc.}, 103(2):339--350, 1988.

\bibitem{F06}
T.~Fisher.
\newblock Thesis.
\newblock {\em Available online at http://www.etda.libraries.psu.edu/theses/approved/WorldWideFiles/ETD-1174/fisher-thesis-draft.pdf. PennState, 2006.}, 2006.

\bibitem{fk91}
L.~Flaminio and A.~Katok.
\newblock Rigidity of symplectic {A}nosov diffeomorphisms on low-dimensional tori.
\newblock {\em Ergodic Theory Dynam. Systems}, 11(3):427--441, 1991.

\bibitem{fh03}
P.~Foulon and B.~Hasselblatt.
\newblock Zygmund strong foliations.
\newblock {\em Israel J. Math.}, 138:157--169, 2003.

\bibitem{fra69}
J.~Franks.
\newblock Anosov diffeomorphisms on tori.
\newblock {\em Trans. Amer. Math. Soc.}, 145:117--124, 1969.

\bibitem{gs20}
S.~Gan and Y.~Shi.
\newblock Rigidity of center {L}yapunov exponents and {$su$}-integrability.
\newblock {\em Comment. Math. Helv.}, 95(3):569--592, 2020.

\bibitem{ghy93}
\'E. Ghys.
\newblock Rigidit\'e{} diff\'erentiable des groupes fuchsiens.
\newblock {\em Inst. Hautes \'Etudes Sci. Publ. Math.}, (78):163--185, 1993.

\bibitem{gog08}
A.~Gogolev.
\newblock Smooth conjugacy of {A}nosov diffeomorphisms on higher-dimensional tori.
\newblock {\em J. Mod. Dyn.}, 2(4):645--700, 2008.

\bibitem{gog2010}
A.~Gogolev.
\newblock Diffeomorphisms {H}{\"o}lder conjugate to anosov diffeomorphisms.
\newblock {\em Ergodic Theory and Dynamical Systems}, 30(2):441--456, 2010.

\bibitem{gog12}
A.~Gogolev.
\newblock How typical are pathological foliations in partially hyperbolic dynamics: an example.
\newblock {\em Israel J. Math.}, 187:493--507, 2012.

\bibitem{gog17}
A.~Gogolev.
\newblock Bootstrap for local rigidity of {A}nosov automorphisms on the 3-torus.
\newblock {\em Comm. Math. Phys.}, 352(2):439--455, 2017.

\bibitem{gg08}
A.~Gogolev and M.~Guysinsky.
\newblock {$C^1$}-differentiable conjugacy of {A}nosov diffeomorphisms on three dimensional torus.
\newblock {\em Discrete Contin. Dyn. Syst.}, 22(1-2):183--200, 2008.

\bibitem{gks11}
A.~Gogolev, B.~Kalinin, and V.~Sadovskaya.
\newblock Local rigidity for {A}nosov automorphisms.
\newblock {\em Math. Res. Lett.}, 18(5):843--858, 2011.
\newblock With an appendix by Rafael de la Llave.

\bibitem{gks20}
A.~Gogolev, B.~Kalinin, and V.~Sadovskaya.
\newblock Local rigidity of {L}yapunov spectrum for toral automorphisms.
\newblock {\em Israel J. Math.}, 238(1):389--403, 2020.

\bibitem{gu23}
R.~Gu.
\newblock Smooth stable foliations of {A}nosov diffeomorphisms.
\newblock {\em arXiv:2310.19088}, 2023.

\bibitem{hnw02}
D.~Hadjiloucas, M.~Nicol, and C.~Walkden.
\newblock Regularity of invariant graphs over hyperbolic systems.
\newblock {\em Ergodic theory and Dynamical systems}, 22(2):469--482, 2002.

\bibitem{hs21}
A.~Hammerlindl and Y.~Shi.
\newblock Accessibility of derived-from-{A}nosov systems.
\newblock {\em Trans. Amer. Math. Soc.}, 374(4):2949--2966, 2021.

\bibitem{hasboot}
B.~Hasselblatt.
\newblock Bootstrapping regularity of the {A}nosov splitting.
\newblock {\em Proc. Amer. Math. Soc.}, 115(3):817--819, 1992.

\bibitem{has95}
B.~Hasselblatt.
\newblock Regularity of the {A}nosov splitting and of horospheric foliations.
\newblock {\em Ergodic Theory Dynam. Systems}, 14(4):645--666, 1994.

\bibitem{has97}
B.~Hasselblatt.
\newblock Regularity of the {A}nosov splitting. {II}.
\newblock {\em Ergodic Theory Dynam. Systems}, 17(1):169--172, 1997.

\bibitem{has02}
B.~Hasselblatt.
\newblock Critical regularity of invariant foliations.
\newblock {\em Discrete Contin. Dyn. Syst.}, 8(4):931--937, 2002.

\bibitem{hw99}
B.~Hasselblatt and A.~Wilkinson.
\newblock Prevalence of non-{L}ipschitz {A}nosov foliations.
\newblock {\em Ergodic Theory Dynam. Systems}, 19(3):643--656, 1999.

\bibitem{ph07}
M.~Hirayama and Y.~Pesin.
\newblock Non-absolutely continuous foliations.
\newblock {\em Israel J. Math.}, 160:173--187, 2007.

\bibitem{hps77}
M.~W. Hirsch, C.~C. Pugh, and M.~Shub.
\newblock {\em Invariant manifolds}, volume Vol. 583 of {\em Lecture Notes in Mathematics}.
\newblock Springer-Verlag, Berlin-New York, 1977.

\bibitem{hoc14}
M.~Hochman.
\newblock On self-similar sets with overlaps and inverse theorems for entropy.
\newblock {\em Ann. of Math. (2)}, 180(2):773--822, 2014.

\bibitem{HR}
M.~Hochman and A.~Rapaport.
\newblock Hausdorff dimension of planar self-affine sets and measures with overlaps.
\newblock {\em J. Eur. Math. Soc. (JEMS)}, 24(7):2361--2441, 2022.

\bibitem{hopf}
E.~Hopf.
\newblock Statistik der geod\"atischen {L}inien in {M}annigfaltigkeiten negativer {K}r\"ummung.
\newblock {\em Ber. Verh. S\"achs. Akad. Wiss. Leipzig Math.-Phys. Kl.}, 91:261--304, 1939.

\bibitem{hl93}
T.~Hu and K.~Lau.
\newblock Fractal dimensions and singularities of the {W}eierstrass type functions.
\newblock {\em Trans. Amer. Math. Soc.}, 335(2):649--665, 1993.

\bibitem{kh90}
S.~Hurder and A.~Katok.
\newblock Differentiability, rigidity and {G}odbillon-{V}ey classes for {A}nosov flows.
\newblock {\em Inst. Hautes \'Etudes Sci. Publ. Math.}, (72):5--61, 1990.

\bibitem{jpl95}
M.~Jiang, Ya.~B. Pesin, and R.~de~la Llave.
\newblock On the integrability of intermediate distributions for {A}nosov diffeomorphisms.
\newblock {\em Ergodic Theory Dynam. Systems}, 15(2):317--331, 1995.

\bibitem{JLPX}
Y.~Jiao, J.~Li, W.~Pan, and D.~Xu.
\newblock On the dimension of limit sets on {$\Bbb{P}(\Bbb{R}^3)$} via stationary measures: variational principles and applications.
\newblock {\em IMRN}, 2024(19):13015--13045, 2024.

\bibitem{jou88}
J.-L. Journ\'e.
\newblock A regularity lemma for functions of several variables.
\newblock {\em Rev. Mat. Iberoamericana}, 4(2):187--193, 1988.

\bibitem{kal11}
B.~Kalinin.
\newblock Liv\v sic theorem for matrix cocycles.
\newblock {\em Ann. of Math. (2)}, 173(2):1025--1042, 2011.

\bibitem{ks09}
B.~Kalinin and V.~Sadovskaya.
\newblock On {A}nosov diffeomorphisms with asymptotically conformal periodic data.
\newblock {\em Ergodic Theory Dynam. Systems}, 29(1):117--136, 2009.

\bibitem{ksw23}
B.~Kalinin, V.~Sadovskaya, and Z.~Wang.
\newblock Smooth local rigidity for hyperbolic toral automorphisms.
\newblock {\em Commun. Am. Math. Soc.}, 3:290--328, 2023.

\bibitem{ksw24}
B.~Kalinin, V.~Sadovskaya, and Z.~Wang.
\newblock Global smooth rigidity for toral automorphisms.
\newblock {\em arXiv:2407.13877}, 2024.

\bibitem{kap84}
J.L. Kaplan, J.~Mallet-Paret, and J.A. Yorke.
\newblock The {L}yapunov dimension of a nowhere differentiable attracting torus.
\newblock {\em Ergodic Theory Dynam. Systems}, 4(2):261--281, 1984.

\bibitem{kh95}
A.~Katok and B.~Hasselblatt.
\newblock {\em Introduction to the modern theory of dynamical systems}, volume~54 of {\em Encyclopedia of Mathematics and its Applications}.
\newblock Cambridge University Press, Cambridge, 1995.

\bibitem{LPX}
J.~Li, W.~Pan, and D.~Xu.
\newblock On the dimension of limit sets on {$\Bbb{P}(\Bbb{R}^3)$} via stationary measures: the theory and applications.
\newblock {\em 2311.10265}, 2023.

\bibitem{man74}
A.~Manning.
\newblock There are no new {A}nosov diffeomorphisms on tori.
\newblock {\em Amer. J. Math.}, 96:422--429, 1974.

\bibitem{mat95}
P.~Mattila.
\newblock {\em Geometry of sets and measures in {E}uclidean spaces: Fractals and rectifiability}, volume~44 of {\em Cambridge Studies in Advanced Mathematics}.
\newblock Cambridge University Press, Cambridge, 1995.

\bibitem{mil97}
J.~Milnor.
\newblock Fubini foiled: {K}atok’s paradoxical example in measure theory.
\newblock {\em The Mathematical Intelligencer}, 19(2):30--32, 1997.

\bibitem{por15}
R.~Potrie.
\newblock Partial hyperbolicity and foliations in {$\Bbb{T}^3$}.
\newblock {\em J. Mod. Dyn.}, 9:81--121, 2015.

\bibitem{psw97}
C.~Pugh, M.~Shub, and A.~Wilkinson.
\newblock H\"{o}lder foliations.
\newblock {\em Duke Math. J.}, 86(3):517--546, 1997.

\bibitem{psw10}
C.~Pugh, M.~Shub, and A.~Wilkinson.
\newblock H\"older foliations, revisited.
\newblock {\em J. Mod. Dyn.}, 6(1):79--120, 2012.

\bibitem{R}
A.~Rapaport.
\newblock On self-affine measures associated to strongly irreducible and proximal systems.
\newblock {\em Adv. Math.}, 449:Paper No. 109734, 116, 2024.

\bibitem{rs21}
H.~Ren and W.~Shen.
\newblock A dichotomy for the {W}eierstrass-type functions.
\newblock {\em Invent. Math.}, 226(3):1057--1100, 2021.

\bibitem{rgz17}
Y.~Ren, S.~Gan, and P.~Zhang.
\newblock Accessibility and homology bounded strong unstable foliation for {A}nosov diffeomorphisms on 3-torus.
\newblock {\em Acta Math. Sin. (Engl. Ser.)}, 33(1):71--76, 2017.

\bibitem{hertznote}
F.~Rodriguez~Hertz, M.~Rodriguez~Hertz, and R.~Ures.
\newblock A survey of partially hyperbolic dynamics.
\newblock In {\em Partially hyperbolic dynamics, laminations, and {T}eichm\"uller flow}, volume~51 of {\em Fields Inst. Commun.}, pages 35--87. Amer. Math. Soc., Providence, RI, 2007.

\bibitem{sx09}
R.~Saghin and Z.~Xia.
\newblock Geometric expansion, {L}yapunov exponents and foliations.
\newblock {\em Ann. Inst. H. Poincar\'e{} C Anal. Non Lin\'eaire}, 26(2):689--704, 2009.

\bibitem{sy19}
R.~Saghin and J.~Yang.
\newblock Lyapunov exponents and rigidity of {A}nosov automorphisms and skew products.
\newblock {\em Adv. Math.}, 355:106764, 45, 2019.

\bibitem{shen18}
W.~Shen.
\newblock Hausdorff dimension of the graphs of the classical {W}eierstrass functions.
\newblock {\em Math. Z.}, 289(1-2):223--266, 2018.

\bibitem{sw00}
M.~Shub and A.~Wilkinson.
\newblock Pathological foliations and removable zero exponents.
\newblock {\em Invent. Math.}, 139(3):495--508, 2000.

\bibitem{var16}
R.~Var\~ao.
\newblock Center foliation: absolute continuity, disintegration and rigidity.
\newblock {\em Ergodic Theory Dynam. Systems}, 36(1):256--275, 2016.

\end{thebibliography}
\end{document}